\numberwithin{equation}{section}
\renewcommand\d{\partial}
\renewcommand\a{\alpha}
\renewcommand\b{\beta}
\def\eps {\varepsilon}
\newcommand{\R}{\mathbb R}
\newcommand\LA{\left\langle}
\newcommand\RA{\right\rangle}
\newcommand{\RM}{{\mathbb{R}}}
\newcommand{\CM}{{\mathbb{C}}}
\newcommand{\NM}{{\mathbb{N}}}
\newcommand{\ZM}{{\mathbb{Z}}}
\newcommand{\vt}{\mathring{v}}
\newcommand{\ri}{\mathrm{i}}
\newcommand{\re}{\mathrm{e}}
\newcommand{\de}{\mathrm{d}}
\newtheorem{theorem}{Theorem}[section]
\newtheorem{proposition}[theorem]{Proposition}
\newtheorem{corollary}[theorem]{Corollary}
\newtheorem{lemma}[theorem]{Lemma}
\newtheorem{remark}[theorem]{Remark}
\theoremstyle{definition}
\newtheorem{definition}[theorem]{Definition}
\title{Nonlinear Subharmonic Dynamics of Spectrally Stable Lugiato-Lefever Periodic Waves}
\author{Mariana Haragus\thanks{FEMTO-ST Institute, University of Franche Comt\'e, 15b Avenue des Montboucons, 25030 Besan\c con cedex, France; \texttt{mharagus@univ-fcomte.fr}}
\quad Mathew A. Johnson\thanks{Department of Mathematics, University of Kansas, 1460 Jayhawk Boulevard, Lawrence, KS 66045, USA; \texttt{matjohn@ku.edu}}
\quad Wesley R. Perkins\thanks{Division of Mathematics and Computer Science, Lyon College, 2300 Highland Road, Batesville, AR 72501, USA; \texttt{wesley.perkins@lyon.edu}}
\quad Bj\"orn de Rijk\thanks{Karlsruhe Institute of Technology, Englerstra\ss e 2, 76131 Karlsruhe, Germany; \texttt{bjoern.rijk@kit.edu}}
}
\date{\today}
\begin{document}

\maketitle

\begin{abstract}
We study the nonlinear dynamics of perturbed, spectrally stable $T$-periodic stationary solutions of the Lugiato-Lefever equation (LLE), a damped nonlinear Schr\"odinger equation with forcing that arises in nonlinear optics. It is known that for each $N\in\NM$, such a $T$-periodic wave train is asymptotically stable against $NT$-periodic, i.e. subharmonic, perturbations, in the sense that initially nearby data will converge at an exponential rate to a (small) spatial translation of the underlying wave.  Unfortunately, in such results both the allowable size of initial perturbations as well as the exponential rates of decay depend on $N$ and, in fact, tend to zero as $N\to\infty$, leading to a lack of uniformity in the period of the perturbation. In recent work, the authors performed a delicate decomposition of the associated linearized solution operator and obtained linear estimates which are uniform in $N$. The dynamical description suggested by this uniform linear theory indicates that the corresponding nonlinear iteration can only be closed if one allows for a spatio-temporal phase modulation of the underlying wave. However, such a modulated perturbation is readily seen to satisfy a quasilinear equation, yielding an inherent loss of regularity. We regain regularity by transferring a nonlinear damping estimate, which has recently been obtained for the LLE in the case of localized perturbations to the case of subharmonic perturbations. Thus, we obtain a nonlinear, subharmonic stability result for periodic stationary solutions of the LLE that is uniform in $N$. This in turn yields an improved nonuniform subharmonic stability result  providing an $N$-independent ball of initial perturbations which eventually exhibit exponential decay at an $N$-dependent rate. Finally, we argue that our results connect in the limit $N \to \infty$ to previously established stability results against localized perturbations, thereby unifying existing theories.
\end{abstract}

\paragraph{Keywords:} Nonlinear Stability; Periodic Waves; Subharmonic Perturbations; Lugiato-Lefever Equation.

\paragraph{Subject Class:} 35B35; 35B10; 35Q60.

\section{Introduction} 

In this paper, we consider the asymptotic behavior and nonlinear stability against subharmonic perturbations of periodic stationary solutions of the Lugiato-Lefever equation (LLE)
\begin{equation}\label{e:LLE}
\psi_t = -\ri\b \psi_{xx} - (1+\ri\a)\psi + \ri|\psi|^2\psi + F,
\end{equation}
where $\psi(x,t)$ is a complex-valued function depending on a temporal variable $t \geq 0$ and a spatial variable $x \in \R$, the parameters $\alpha,\beta$ are real, and $F$ is a positive constant. The LLE was derived in 1987 from Maxwell's equations in~\cite{LL87} as a model to study pattern formation within the optical field in a dissipative and nonlinear cavity filled with a Kerr medium and subjected to a continuous laser pump.  In this context, $\psi(x,t)$ represents the field envelope, $F>0$ represents the normalized pump strength, $|\beta|=1$ is a dispersion parameter, and $\alpha>0$ represents a detuning parameter. Note that the case $\beta=1$, corresponding to a defocusing nonlinearity, is referred to as the ``normal" dispersion case, while the case $\beta=-1$, corresponding to a focusing nonlinearity, is referred to as the ``anomalous" dispersion case.  The LLE has recently become the subject of intense study in the physics literature, in part due to the fact that it has become a canonical model for high-frequency combs generated by microresonators in periodic optical waveguides; see, for example,~\cite{CGTM17} and references therein.

Several recent works have studied the existence of spatially periodic stationary solutions of~\eqref{e:LLE}, as well as the nonlinear dynamics about them.
Such solutions $\psi(x,t)=\phi(x)$ correspond to $T$-periodic solutions of the profile equation
\begin{equation}\label{e:profile}
-i\b \phi'' - (1+\ri\a)\phi + \ri|\phi|^2\phi + F=0.
\end{equation}
Smooth periodic solutions of~\eqref{e:profile} have been constructed using perturbative arguments, as well as local and global bifurcation theory~\cite{DH18_2,DH18_1,Go17,HSS19,MR17,MOT1}.
{It turns out that most of the constructed periodic waves are unstable under general bounded perturbations~\cite{DH18_2}. A class of periodic waves which are spectrally stable under general bounded perturbations has been identified in~\cite{DH18_1}. Nonlinear stability results have been obtained for co-periodic perturbations, i.e. $T$-periodic perturbations of the $T$-periodic wave $\phi$, in~\cite{MOT2,SS19} and for localized perturbations in the recent works~\cite{HJPR,ZUM22}. The results from~\cite{MOT2,SS19} can be extended to subharmonic perturbations, i.e. $NT$-periodic perturbations, provided spectral stability holds and the integer $N$ is fixed. It turns out that both the allowable size of initial perturbations as well as the exponential rates of decay, which depend on $N$, tend to zero as $N\to\infty$, leading to a lack of uniformity in the period of the perturbation. A linear stability result which holds uniformly in $N$ has been obtained in~\cite{HJP21}. The goal of the present work is to upgrade this result to the nonlinear level.}

\subsection{Spectral Stability Assumptions}

The local dynamics about a given $T$-periodic stationary solution $\phi$ of~\eqref{e:LLE} can be captured by considering the perturbed solution 
\begin{equation}\label{e:pert}
\psi(x,t)=\phi(x)+\tilde{v}(x,t)
\end{equation}
of~\eqref{e:LLE}, where {$\tilde v$ represents} some admissible perturbation.
Decomposing the solution $\phi=\phi_r+\ri\phi_i$ and the perturbation $\tilde{v}=\tilde{v}_r+\ri\tilde{v}_i$ into their real and imaginary parts\footnote{Going forward, we will slightly abuse notation and write our complex functions $f$ in the form $f = \left(\begin{smallmatrix}f_r \\ f_i\end{smallmatrix}\right)$.}, we see that $\eqref{e:pert}$ is a solution of~\eqref{e:LLE} provided that the real-valued functions $\tilde{v}_r$ and $\tilde{v}_i$ satisfy the system
\begin{equation}\label{e:lin}
\partial_t\left(\begin{array}{c}\tilde{v}_r\\\tilde{v}_i\end{array}\right)=\mathcal{A}[\phi]\left(\begin{array}{c}\tilde{v}_r\\\tilde{v}_i\end{array}\right)+\widetilde{\mathcal{N}}[\phi](\tilde{v}),
\end{equation}
where here $\mathcal A[\phi]$ is the (real) matrix differential operator
\begin{equation}\label{e:Aphi}
\mathcal A[\phi]=- I+\mathcal{J}\mathcal{L}[\phi],
\end{equation}
with
\[
\mathcal{J}=\left(\begin{array}{cc}0&-1\\1&0\end{array}\right),\quad
\mathcal{L}[\phi] = \left(\begin{array}{cc} -\b \d_x^2 - \a  + 3\phi_{r}^2 + \phi_{i}^2 & 2\phi_{r}\phi_{i} \\
  2\phi_{r}\phi_{i} & -\b \d_x^2 - \a  + \phi_{r}^2 + 3\phi_{i}^2\end{array}\right),
  \]
and where the nonlinearity is given by
\begin{align} \label{e:Nphi}
\widetilde{\mathcal{N}}[\phi]\left(\tilde{v}\right) = \mathcal{J}\left(\begin{array}{cc} 3\tilde{v}_{r}^2 + \tilde{v}_{i}^2 & 2\tilde{v}_{r}\tilde{v}_{i} \\
  2\tilde{v}_{r}\tilde{v}_{i} & \tilde{v}_{r}^2 + 3\tilde{v}_{i}^2\end{array}\right)\left(\begin{array}{c} \phi_r\\ \phi_i\end{array}\right) +
  \mathcal{J}\left|\left(\begin{array}{c}\tilde{v}_r\\\tilde{v}_i\end{array}\right)\right|^2\left(\begin{array}{c}\tilde{v}_r\\\tilde{v}_i\end{array}\right).
\end{align}

{The following notion of spectral stability served as the main hypothesis for the uniform linear stability result for subharmonic perturbations in~\cite{HJP21} as well as for the nonlinear stability results for localized perturbations in~\cite{HJPR, ZUM22}.}

\begin{definition}\label{Def:spec_stab}
Let $T>0$.   A smooth $T$-periodic stationary solution $\phi$ of~\eqref{e:LLE} is said to be \emph{diffusively spectrally stable} provided the following conditions hold:
\begin{enumerate}
\item the spectrum of the linear operator $\mathcal{A}[\phi]$, given by~\eqref{e:Aphi} and acting in $L^2(\R)$, satisfies 
\[
\sigma_{L^2(\R)}(\mathcal{A}[\phi])\subset\{\lambda\in\CM:\Re(\lambda)<0\}\cup\{0\};
\]
\item there exists $\theta>0$ such that for any $\xi\in[-\pi/T,\pi/T)$ the real part of the spectrum of the Bloch operator 
$\mathcal{A}_\xi[\phi]:=\mathcal{M}_\xi^{-1}\mathcal{A}[\phi]\mathcal{M}_\xi$, acting on $L^2_{\rm per}(0,T)$, satisfies
\[
\Re\left(\sigma_{L^2_{\rm per}(0,T)}(\mathcal{A}_\xi[\phi])\right)\leq-\theta \xi^2,
\]
where here $\mathcal{M}_\xi$ denotes the multiplication operator $\left(\mathcal{M}_\xi f\right)(x)=\re^{\ri\xi x}f(x)$;
\item $\lambda=0$ is a simple eigenvalue of the Bloch operator $\mathcal{A}_0[\phi]$, and the derivative $\phi'\in L^2_{\rm per}(0,T)$ of the periodic wave is an associated eigenfunction.
\end{enumerate}
\end{definition} 

Since the pioneering work of Schneider~\cite{S96,S98_1,S98_2}, the above notion of spectral stability (or extensions of it that account for more symmetries) has been standard in the linear and nonlinear stability analysis of periodic traveling or {steady} waves in dissipative systems. Indeed, it has been shown~\cite{RIJK22,DSSS,IYS99,JNRZ_13_1,JNRZ_13_2,SSSU} to imply important properties regarding the nonlinear dynamics against localized, or general bounded, perturbations, including the long-time dynamics under (large) phase modulations. We emphasize that the spectrally stable periodic steady waves of the LLE constructed in~\cite{DH18_1} are diffusively spectrally stable in the sense of Definition~\ref{Def:spec_stab}.

{Floquet-Bloch theory shows} that the spectrum of $\mathcal{A}[\phi]$ acting in $L^2(\R)$ is equal to the union of the spectra of the Bloch operators  $\mathcal{A}_\xi[\phi]$ acting in $L_{\rm per}^2(0,T)$ for  $\xi\in[-\pi/T,\pi/T)$. For subharmonic perturbations, the operator $\mathcal{A}[\phi]$ acts in $L_{\rm per}^2(0,NT)$, for some $N\in\mathbb{N}$, and its spectrum is the union of the spectra of the Bloch operators  $\mathcal{A}_\xi[\phi]$ acting in $L_{\rm per}^2(0,T)$ for $\xi$ in the discrete set $\{y \in [-\pi/T,\pi/T) : \re^{\ri y NT}=1\}$. {As a consequence,} diffusively spectrally stable periodic waves are necessarily spectrally stable to all subharmonic perturbations and, further, the spectrum possesses an $N$-dependent gap $\delta_N > 0$, i.e., we have
\begin{equation}\label{e:deltaN}
\Re\left(\sigma_{L^2_{\rm per}(0,NT)}(\mathcal{A}[\phi])\setminus\{0\}\right)\leq -\delta_N
\end{equation}
for each $N\in\mathbb{N}$. {We recall that the presence of the eigenvalue $0$ is a well-known consequence of the invariance of the LLE under spatial translations. Differentiation of the profile equation~\eqref{e:profile} with respect to $x$ shows that $\mathcal{A}[\phi](\phi') = 0$, and hence $\lambda=0$ is an eigenvalue with eigenfunction $\phi'$ of the operator $\mathcal{A}[\phi]$ acting in $L_{\rm per}^2(0,NT)$ for all $N \in \NM$.}

\subsection{Prior Subharmonic Stability Results}

The presence of the spectral gap~\eqref{e:deltaN} and the simplicity of the eigenvalue $\lambda=0$ allow us to obtain nonlinear stability against subharmonic perturbations for each arbitrary, but fixed, integer $N \in \NM$. This result follows as a straightforward extension of the nonlinear stability result for co-periodic perturbations in~\cite{SS19}.
  
\begin{theorem}[Subharmonic Nonlinear Asymptotic Stability]\label{T:SS}
  Let $\phi$ be a smooth $T$-periodic stationary solution of~\eqref{e:LLE} that is diffusively spectrally stable  in the sense of Definition~\ref{Def:spec_stab}. For each $N\in\NM$, take $\delta_N>0$ such that~\eqref{e:deltaN} holds.
  {Then, for every $\delta\in (0,\delta_N)$} there exist $\eps_\delta,C_\delta>0$ such that for each $v_0\in H^1_{\rm per}(0,NT)$ with $\|v_0\|_{H^1_{\rm per}(0,NT)}<\eps_\delta$ there exist a constant {$\gamma \in \R$} and a global mild solution $\psi \in C\big([0,\infty),H_{\rm per}^1(0,NT)\big)$ of~\eqref{e:LLE} with initial condition $\psi(0)=\phi+v_0$ satisfying
\[
|\gamma| \leq C_\delta\|v_0\|_{H^1_{\rm per}(0,NT)}, \qquad 
\left\|\psi(\cdot,t)-\phi(\cdot + \gamma)\right\|_{H_{\rm per}^1(0,NT)}\leq C_\delta\re^{-\delta t}\|v_0\|_{H_{\rm per}^1(0,NT)},
\]
for all $t \geq 0$.
\end{theorem}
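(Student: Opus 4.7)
\emph{Plan.} With $N$ fixed, the spectral gap~\eqref{e:deltaN} reduces the proof to a standard co-periodic nonlinear stability argument on the extended period $NT$, in the spirit of~\cite{SS19,MOT2}. I would first view $\mathcal{A}[\phi]$ as an unbounded operator on $L^2_{\rm per}(0,NT)$ with domain $H^2_{\rm per}(0,NT)$; its resolvent is compact, so by~\eqref{e:deltaN} the only spectral value in $\{\Re\lambda > -\delta_N\}$ is the simple eigenvalue $\lambda = 0$ with eigenfunction $\phi'$. Let $\Pi^c$ denote the associated rank-one Riesz projection, $\Pi^c v = \langle\tilde\phi, v\rangle\,\phi'$ for a normalized adjoint eigenfunction $\tilde\phi \in \ker \mathcal{A}[\phi]^*$, and set $\Pi^s := I - \Pi^c$. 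For each $\delta \in (0,\delta_N)$ the key linear input is the semigroup estimate
\[
\bigl\|\re^{t\mathcal{A}[\phi]}\Pi^s v\bigr\|_{H^1_{\rm per}(0,NT)} \le C_\delta\, \re^{-\delta t}\,\|v\|_{H^1_{\rm per}(0,NT)}, \qquad t \ge 0.
\]

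Next I would introduce the modulated perturbation $w(x,t) := \psi(x,t) - \phi(x+\gamma(t))$, with $\gamma(t)\in\R$ fixed implicitly through the phase condition $\langle \tilde\phi(\cdot+\gamma(t)),\, w(\cdot,t)\rangle_{L^2_{\rm per}(0,NT)} = 0$; the implicit function theorem yields such a $\gamma$ for small $\|v_0\|_{H^1_{\rm per}(0,NT)}$, with $|\gamma(0)| \lesssim \|v_0\|_{H^1_{\rm per}(0,NT)}$. Since each translate $\phi(\cdot+\gamma)$ is itself a stationary solution of~\eqref{e:LLE},
\[
\partial_t w = \mathcal{A}[\phi(\cdot+\gamma)]w + \widetilde{\mathcal{N}}[\phi(\cdot+\gamma)](w) - \dot\gamma\,\phi'(\cdot+\gamma),
\]
and differentiating the phase condition in $t$, using $\mathcal{A}[\phi(\cdot+\gamma)]^*\tilde\phi(\cdot+\gamma) = 0$ and $\langle\tilde\phi(\cdot+\gamma),\phi'(\cdot+\gamma)\rangle=1$, gives the scalar modulation equation
\[
\dot\gamma \,=\, \frac{\bigl\langle\tilde\phi(\cdot+\gamma),\, \widetilde{\mathcal{N}}[\phi(\cdot+\gamma)](w)\bigr\rangle}{1 - \bigl\langle\partial_\gamma\tilde\phi(\cdot+\gamma),\, w\bigr\rangle},
\]
so $|\dot\gamma|\lesssim \|w\|_{H^1_{\rm per}(0,NT)}^2$ whenever $\|w\|_{H^1_{\rm per}(0,NT)}$ is small. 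A Duhamel fixed-point / continuation argument for $\Pi^s w$ in the weighted space $\{w\in C([0,\infty); H^1_{\rm per}(0,NT)) : \sup_{t\ge 0} \re^{\delta t}\|w(\cdot,t)\|_{H^1_{\rm per}(0,NT)}<\infty\}$, using the polynomial estimate $\|\widetilde{\mathcal{N}}[\phi(\cdot+\gamma)](w)\|_{H^1}\lesssim \|w\|_{H^1}^2 + \|w\|_{H^1}^3$ (via $H^1\hookrightarrow L^\infty$ in one dimension) and absorbing the $\gamma$-dependence of $\mathcal{A}$ and $\widetilde{\mathcal{N}}$ into higher-order sources, then closes for initial data of size $\varepsilon_\delta$ sufficiently small, yielding $\|w(\cdot,t)\|_{H^1}\le C_\delta\re^{-\delta t}\|v_0\|_{H^1}$ and $|\dot\gamma(t)|\lesssim \re^{-2\delta t}\|v_0\|_{H^1}^2$. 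Hence $\gamma(t)$ converges at rate $\re^{-2\delta t}$ to some $\gamma_\infty\in\R$ with $|\gamma_\infty|\lesssim\|v_0\|_{H^1}$; setting $\gamma := \gamma_\infty$ and combining with $\|\phi(\cdot+\gamma(t))-\phi(\cdot+\gamma_\infty)\|_{H^1}\lesssim |\gamma(t)-\gamma_\infty|$ delivers the claimed decay of $\|\psi(\cdot,t)-\phi(\cdot+\gamma)\|_{H^1_{\rm per}(0,NT)}$.

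The main technical hurdle is the linear semigroup estimate above: LLE is dispersive rather than parabolic, $\mathcal{A}[\phi]$ is not sectorial, and its semigroup is not analytic, so the spectral bound~\eqref{e:deltaN} does not automatically yield a growth bound on $\Range(\Pi^s)$. I would establish it via a Gearhart--Pr\"uss resolvent argument on the invariant Hilbert subspace $\Range(\Pi^s)$: absence of spectrum in $\{\Re\lambda > -\delta\}$ combined with control of the resolvent for large $|\Im\lambda|$, which follows from the structure $\mathcal{A}[\phi] = -I + \mathcal{J}\mathcal{L}[\phi]$ (a bounded multiplicative perturbation of an essentially skew-adjoint principal part $\mathcal{J}(-\beta\partial_x^2)$ on the compact interval $(0,NT)$), yields the uniform resolvent bound required; passage from $L^2$ to $H^1$ is routine using the commutator of $\partial_x$ with $\mathcal{A}[\phi]$ modulo bounded multiplications by the smooth periodic coefficients of $\phi$.
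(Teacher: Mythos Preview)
Your proposal is correct and follows essentially the same route as the paper. The paper does not give a self-contained proof of Theorem~\ref{T:SS}; it only sketches the argument in the paragraphs following the statement---introducing the rank-one spectral projection $\mathcal{P}_{0,N}$ (your $\Pi^c$), asserting the exponential semigroup bound on $\Range(1-\mathcal{P}_{0,N})$, and referring to~\cite{SS19} for the nonlinear iteration---so your more detailed outline (time-dependent phase $\gamma(t)$, modulation ODE, Duhamel closure, and in particular the Gearhart--Pr\"uss justification of the semigroup estimate, which the paper does not spell out) is a faithful expansion of what the paper intends.
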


While Theorem~\ref{T:SS} establishes nonlinear stability against $NT$-periodic perturbations for each fixed $N \in \NM$, it lacks uniformity in $N$ in two (related) aspects. Indeed, both the allowable size of initial perturbations $\eps_\delta$ and  the exponential rate of decay $\delta$ are controlled by the size of the spectral gap $\delta_N$.  Since $\delta_N\to 0$ as $N\to\infty$, it follows that both $\eps_\delta$ and $\delta$ chosen in Theorem~\ref{T:SS} necessarily tend to zero as $N\to\infty$, while the constant $C_\delta$ tends to infinity.  

{Addressing this issue requires} us to develop a strategy to uniformly handle the accumulation of $NT$-periodic eigenvalues near $\lambda=0$ as $N \to \infty$. In the proof of Theorem~\ref{T:SS}, the eigenvalue $\lambda=0$ was enclosed in a small ball $B(0,r_N)$, where the $N$-dependent radius $r_N > 0$ is chosen so that
\[
\sigma_{L^2_{\rm per}(0,NT)}\left(\mathcal{A}[\phi]\right)\cap B(0,r_N)=\{0\},
\]
leading to the associated spectral projection
\begin{equation} \label{e:proj}
\mathcal{P}_{0,N}=\frac{1}{2\pi \ri}\int_{\partial B(0,r_N)}\frac{\de z}{z-\mathcal{A}[\phi]} = \frac1N \phi' \LA\widetilde{\Phi}_0,\cdot \RA_{L^2_{\rm per}(0,NT)},
\end{equation}
onto the $1$-dimensional $NT$-periodic kernel of $\mathcal{A}[\phi]$ spanned by $\phi'$. Here, $\widetilde{\Phi}_0$ is the function spanning the $L^2_{\rm per}(0,T)$-kernel of the adjoint $\mathcal{A}[\phi]^*$ of the operator $\mathcal{A}[\phi]$,  normalized to satisfy 
$\smash{\langle\widetilde{\Phi}_0,\phi'\rangle_{L^2_{\rm per}(0,T)}=1}$. One then decomposes the semigroup, generated by $\mathcal{A}[\phi]$, as
\[
\re^{\mathcal{A}[\phi]t}={\mathcal{P}_{0,N}}+\re^{\mathcal{A}[\phi]t}\left(1-\mathcal{P}_{0,N}\right),
\]
and shows that for each $\delta\in(0,\delta_N)$ there exists a constant $C_\delta>0$ such that
\begin{equation*}
\left\|\re^{\mathcal{A}[\phi]t}\left(1-\mathcal{P}_{0,N}\right)f\right\|_{H^1_{\rm per}(0,NT)}\leq C_\delta\re^{-\delta t}\|f\|_{H^1_{\rm per}(0,NT)},
\end{equation*}
for all $t\geq 0$ and $f\in H^1_{\rm per}(0,NT)$. This allows one to establish the nonlinear stability result in Theorem~\ref{T:SS}; see~\cite{SS19} for more details.

The lack of uniformity thus stems from the fact that one must take the radius $r_N\to 0$ as $N\to\infty$. To establish uniform bounds one should instead work with a ball $B(0,r)$ with an $N$-independent radius $r>0$ and define the spectral projection
\[
\mathcal{P}:=\frac{1}{2\pi \ri}\int_{\partial B(0,r)}\frac{\de z}{z-\mathcal{A}[\phi]},
\]
associated with the generalized eigenspace corresponding to all the eigenvalues in the interior of the ball. Naturally, the dimension of this generalized eigenspace tends to infinity as $N\to\infty$, {a difficulty which must be handled} to establish uniform in $N$ decay estimates associated with the induced decomposition of the semigroup.  This analysis {of the semigroup} was carried out in detail in~\cite{HJP21}. We slightly reformulate the main result from~\cite{HJP21}; see Section~\ref{S:stab_loc}.
  
 \begin{theorem}[\cite{HJP21} Uniform Subharmonic Linear Asymptotic Stability]\label{T:sub_lin} Suppose $\phi$ is a smooth $T$-periodic stationary solution of~\eqref{e:LLE} that is diffusively spectrally stable  in the sense of Definition~\ref{Def:spec_stab}. Then, there exists a constant $C>0$ such that for every $N\in\NM$ and {$f\in L^2_{\rm per}(0,NT)$ there exist a constant $\sigma_\ell\in\R$ and a function $\gamma_\ell \in C\big([0,\infty),L^2_{\rm per}(0,NT)\big)$} with the following properties:
  \begin{align}  
   \begin{split}
\left\|\re^{\mathcal{A}[\phi]t}f\right\|_{L^2_{\rm per}(0,NT)},\ |\sigma_\ell| &\leq C \|f\|_{L^1_{\rm per}(0,NT)\cap L^2_{\rm per}(0,NT)},
\\ 
\left\|\re^{\mathcal{A}[\phi]t}f - \frac1N\phi'(\cdot)\sigma_\ell\right\|_{L^2_{\rm per}(0,NT)} &\leq C(1+t)^{-\frac14}\|f\|_{L^1_{\rm per}(0,NT)\cap L^2_{\rm per}(0,NT)},
\\ 
\left\|\gamma_{\ell}(\cdot,t)- \frac1N\sigma_\ell\right\|_{L^2_{\rm per}(0,NT)} &\leq C(1+t)^{-\frac14}\|f\|_{L^1_{\rm per}(0,NT)\cap L^2_{\rm per}(0,NT)},
\\ 
\left\|\re^{\mathcal{A}[\phi]t}f-\phi'(\cdot)\gamma_\ell (\cdot,t)\right\|_{L^2_{\rm per}(0,NT)}
&\leq C (1+t)^{-\frac34}\|f\|_{L^1_{\rm per}(0,NT)\cap L^2_{\rm per}(0,NT)},
\end{split}\label{result3}
\end{align}
for all $t\geq0$.
\end{theorem}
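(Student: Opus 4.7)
My proof strategy rests on a discrete Bloch-Floquet decomposition adapted to $NT$-periodic functions. For $f\in \Lper{2}(0,NT)$, I write
\[
f(x) = \sum_{j}\re^{\ri\xi_j x}\check g_j(x), \qquad \xi_j=\frac{2\pi j}{NT}, \quad \check g_j(x)=\frac{1}{N}\sum_{k=0}^{N-1}\re^{-\ri\xi_j(x+kT)}f(x+kT)\in\Lper{2}(0,T),
\]
so that $\re^{\mathcal A[\phi]t}f(x) = \sum_j \re^{\ri\xi_j x}\re^{\mathcal A_{\xi_j}[\phi]t}\check g_j(x)$. Two $N$-uniform ingredients will drive every subsequent estimate: the Parseval identity $\|f\|_{\Lper{2}(0,NT)}^2 = N\sum_j\|\check g_j\|_{\Lper{2}(0,T)}^2$, and the direct bound $\|\check g_j\|_{L^1(0,T)}\leq \frac{1}{N}\|f\|_{\Lper{1}(0,NT)}$, which upon pairing against any smooth function will produce a crucial factor $N^{-1}$.

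By diffusive spectral stability and standard analytic perturbation theory, there exists $\xi_0>0$ such that for $|\xi|<\xi_0$ the Bloch operator $\mathcal A_\xi[\phi]$ possesses a simple eigenvalue $\lambda(\xi)$ bifurcating from $\lambda=0$, with $\Re\lambda(\xi)\leq -\tfrac{\theta}{2}\xi^2$, and an analytically $\xi$-dependent rank-one spectral projection $\Pi(\xi)=\phi'_\xi\,\LA\widetilde\Phi_\xi,\cdot\RA_{\Lper{2}(0,T)}$ with $\phi'_0=\phi'$ and $\widetilde\Phi_0$ normalized as in~\eqref{e:proj}. For $|\xi|\geq \xi_0$, the Bloch spectrum lies uniformly in $\{\Re\lambda\leq -\delta_0\}$ for some $\delta_0>0$. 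Fixing a smooth cutoff $\chi$ supported in $|\xi|<\xi_0$ with $\chi\equiv 1$ in a neighborhood of $0$, I split
\[
\re^{\mathcal A[\phi]t}f = S_c(t)f + S_s(t)f, \qquad S_c(t)f(x)=\sum_{j}\chi(\xi_j)\re^{\ri\xi_j x}\re^{\lambda(\xi_j)t}\Pi(\xi_j)\check g_j(x),
\]
where resolvent estimates combined with the spectral gap yield $\|S_s(t)f\|_{\Lper{2}(0,NT)}\leq C\re^{-\delta t}\|f\|_{\Lper{2}(0,NT)}$ uniformly in $N$.

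I then define
\[
\gamma_\ell(x,t)=\sum_j\chi(\xi_j)\re^{\ri\xi_j x}\re^{\lambda(\xi_j)t}\LA\widetilde\Phi_{\xi_j},\check g_j\RA_{\Lper{2}(0,T)}, \qquad \sigma_\ell=\LA\widetilde\Phi_0,f\RA_{\Lper{2}(0,NT)}.
\]
Because $\widetilde\Phi_0$ is $T$-periodic and $\check g_0 = \frac{1}{N}\sum_k f(\cdot+kT)$, the $j=0$ contribution to $\gamma_\ell$ is precisely $\frac{1}{N}\sigma_\ell$. The bounds $|\sigma_\ell|\leq C\|f\|_{\Lper{1}(0,NT)}$ and $\|\re^{\mathcal A[\phi]t}f\|_{\Lper{2}(0,NT)}\leq C\|f\|_{\Lper{1}\cap \Lper{2}}$ follow immediately. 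For the decay estimates, combining Parseval with the pointwise bound $|\LA\widetilde\Phi_{\xi_j},\check g_j\RA|\leq CN^{-1}\|f\|_{\Lper{1}}$ reduces each left-hand side in~\eqref{result3} to a weighted Gaussian Bloch sum; for instance the key estimate becomes
\[
\|\re^{\mathcal A[\phi]t}f-\phi'\gamma_\ell\|_{\Lper{2}(0,NT)}^2 \leq \frac{C}{N}\|f\|_{\Lper{1}}^2\sum_j |\chi(\xi_j)|^2\,\|\phi'_{\xi_j}-\phi'\|_{L^2(0,T)}^2\,\re^{-\theta\xi_j^2 t} + \mathcal O(\re^{-\delta t}),
\]
where the Taylor expansion $\phi'_\xi-\phi' = O(\xi)$ produces the extra $\xi^2$ factor needed to dominate the Riemann sum of spacing $2\pi/(NT)$ by $\frac{NT}{2\pi}\int\xi^2\re^{-\theta\xi^2 t}\de\xi\leq CN(1+t)^{-3/2}$; the $N$-factor cancels the $N^{-1}$ from the $L^1$-bound, producing the uniform $(1+t)^{-3/4}$ rate. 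The first three estimates in~\eqref{result3} are analogous, with the factor $\phi'_\xi-\phi'$ replaced by $\phi'_\xi$ or by $1$, losing the $\xi^2$ gain and hence yielding $(1+t)^{-1/4}$.

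The principal obstacle is uniformity in $N$: as $N\to\infty$ the parameters $\xi_j$ accumulate densely at $\xi=0$, exactly where $\lambda(\xi)$ touches the imaginary axis, so naive per-mode enumeration blows up. Two structural facts resolve this. First, the spectral data $\lambda(\xi),\phi'_\xi,\widetilde\Phi_\xi$ depend analytically on $\xi$ on a fixed interval $|\xi|<\xi_0$ determined by the $N$-independent Bloch family $\{\mathcal A_\xi[\phi]\}_\xi$, so all per-mode constants (and thus the Taylor coefficients of $\phi'_\xi$ at $\xi=0$) are $N$-independent. Second, the subtractions defining $\sigma_\ell$ and $\gamma_\ell$ are crafted so that the surviving Bloch sums behave precisely as Riemann approximations to Gaussian heat-kernel integrals on $\R$; the Parseval $N$-factor then exactly cancels against the $N^{-1}$ from the $L^1$-bound on $\check g_j$, leaving $N$-independent decay rates $(1+t)^{-1/4}$ and $(1+t)^{-3/4}$. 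This discrete-continuum correspondence is the mechanism by which the localized linear theory upgrades to the uniform subharmonic setting.
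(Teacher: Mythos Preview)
Your approach is correct and essentially identical to the paper's: both proofs rest on the discrete Bloch--Floquet representation of the semigroup over $\Omega_N=\{\xi_j\}$, the splitting into a critical part governed by the analytic eigenvalue curve $\lambda(\xi)$ with associated rank-one projections and an exponentially decaying remainder, the $L^1$-bound on the Bloch coefficients producing the crucial factor $N^{-1}$, and the control of the resulting Gaussian sums $\frac{1}{N}\sum_{\xi\in\Omega_N}\xi^{2n}\re^{-d\xi^2 t}\lesssim (1+t)^{-1/2-n}$ as $N$-uniform Riemann approximations. Your choices $\sigma_\ell=\langle\widetilde\Phi_0,f\rangle_{L^2_N}$ and $\gamma_\ell$ as the full critical-mode coefficient sum coincide with the paper's (the paper writes $\gamma_\ell=\tfrac1N\sigma_\ell+s_{p,N}(t)f$, where $s_{p,N}$ is exactly your sum over $j\neq0$), and the extra $\xi^2$ you extract from $\phi'_\xi-\phi'=O(\xi)$ is precisely the mechanism behind the paper's faster $(1+t)^{-3/4}$ decay for $S_N(t)$.
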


{We point out that the use of the $L^1$-norm is essential in obtaining these uniform estimates as can be seen from the proof of Theorem~\ref{T:sub_lin} in~\cite{HJP21}. In fact, the nonuniform embedding $L^2_{\rm per}(0,NT)\hookrightarrow L^1_{\rm per}(0,NT)$ for which
  \[
\|f\|_{L^1_{\rm per}(0,NT)} \leq \sqrt{NT} \|f\|_{L^2_{\rm per}(0,NT)}, \qquad f \in L^2_{\rm per}(0,NT),
  \]
yields that the estimates~\eqref{result3} do hold in $L^2_{\rm per}(0,NT)$, but with an $N$-dependent constant $C\sqrt{NT}$. In addition, the uniform decay rates obtained in this result are exactly the ones found in the case of localized perturbations, the ideas of proof being also very similar, cf.~\cite{HJPR}. 
}

\subsection{Goal of Paper and Technical Challenge}

The aim of this paper is to upgrade Theorem~\ref{T:sub_lin} to the nonlinear level. We note that such a result is not at all straightforward. To gain insight into the main technical difficulties, we first provide an interpretation of the dynamics suggested by Theorem~\ref{T:sub_lin}. Suppose $\phi$ is a $T$-periodic stationary solution of~\eqref{e:LLE}, which is diffusively spectrally stable, and suppose that $\psi(x,t)$ is a solution of~\eqref{e:LLE} with initial data $\psi(x,0)=\phi(x)+\eps v_0(x)$ with $|\eps|\ll 1$ and  $v_0\in {L^2_{\rm per}(0,NT)}$. From~\eqref{result3}, it is natural to suspect that for $t\gg 1$ one should have
\begin{align*}
\psi(x,t)&\approx \phi(x) + \eps \re^{\mathcal{A}[\phi]t}v_0(x)\approx\phi(x) + \eps \phi'(x) \gamma_\ell(x,t)\approx\phi\left(x+\eps\gamma_\ell(x,t)\right),
\end{align*}
which is a small phase modulation of the background wave $\phi$. This suggests that, in order to capture the leading-order dynamics under perturbations (in a uniform way), one must incorporate into the nonlinear argument a phase modulation $\gamma_{\mathrm{nl}}(x,t)$ which depends on both space and time.\footnote{This stands in contrast to the classical approach of using a phase modulation that depends only on time.}

It turns out that the resulting \emph{inverse-modulated perturbation}
\begin{align*}
 v(x,t) = \psi\left(x - \gamma_{\mathrm{nl}}(x,t),t\right) - \phi(x),
\end{align*}
necessarily satisfies a quasilinear equation, thus yielding an inherent loss of regularity. In earlier work~\cite{JP21}, considering subharmonic perturbations of periodic waves in reaction-diffusion systems, this obstacle was overcome using a nonlinear damping estimate, which is an energy estimate effectively controlling higher Sobolev norms of the modulated perturbation in terms of its $L^2$-norm. However, in contrast to the reaction-diffusion case, such a damping estimate cannot be obtained with standard methods in our case due to low-order damping effects of the LLE. 

{The same difficulty appears in the study of the nonlinear stability of periodic waves for localized perturbations. Two alternative approaches to control regularity in the LLE were recently presented in~\cite{HJPR, ZUM22}. While the approach in~\cite{HJPR} relies on tame estimates on the \emph{unmodulated perturbation}
\[
 \tilde{v}(t) := \psi\left(t\right) - \phi.
\]
the approach in~\cite{ZUM22} uses nonlinear damping estimates on the \emph{forward-modulated perturbation}
\[
 \mathring{v}(x,t) = \psi\left(x,t\right) - \phi\left(x + \gamma_{\mathrm{nl}}(x,t)\right).
 \]
This second approach has the advantage that it requires less regularity on  initial data. We refer to~\cite{ZUM22} for a comparison of these methods; see also Section~\ref{S:nonlinear}.}

In this paper, we present an $N$-uniform nonlinear iteration scheme for subharmonic perturbations of Lugiato-Lefever periodic waves, loosely following the modulational approach of~\cite{JP21} for subharmonic perturbations of periodic reaction-diffusion waves, and, in order to control regularity, we transfer the method of~\cite{ZUM22} to the subharmonic setting in an $N$-uniform way.

\subsection{Main Result}

We state our main result, which establishes $N$-uniform nonlinear stability of diffusively spectrally stable $T$-periodic steady waves in the LLE against subharmonic perturbations and gives a precise modulational description of the local dynamics about the wave.

\begin{theorem}[Uniform Subharmonic Nonlinear Asymptotic Stability]\label{T:main}
Let $T > 0$ and suppose that $\phi$ is a smooth $T$-periodic stationary solution of~\eqref{e:LLE} that is diffusively spectrally stable  in the sense of Definition~\ref{Def:spec_stab}.\footnote{These hypotheses on $\phi$ are made throughout the whole paper.} Then, there exist constants $\eps, M > 0$ such that, for each $N\in\NM$, whenever {$v_0\in H^2_{\rm per}(0,NT)$} satisfies
\[
E_0:=\left\|v_0\right\|_{L^1_{\rm per}(0,NT) \cap H^2_{\rm per}(0,NT)}<\eps,
\]
there exist a constant $\sigma_{\mathrm{nl}}\in\R$, a modulation function
\[
\gamma_{\mathrm{nl}} \in C\big([0,\infty),H_{\rm per}^4(0,NT)\big) \cap C^1\big([0,\infty),H^2_{\rm per}(0,NT)\big), 
\]
and a global classical solution
\[
\psi \in C\big([0,\infty),H_{\rm per}^2(0,NT)\big) \cap C^1\big([0,\infty),L^2_{\rm per}(0,NT)\big),
\]
of~\eqref{e:LLE} with initial condition $\psi(0) = \phi + v_0$,
with the following properties:
\begin{align}
\begin{split}
\left\|\psi(\cdot,t) - \phi\right\|_{H^2_{\rm per}(0,NT)},\  |\sigma_{\mathrm{nl}}|
&\leq ME_0, \\
\left\|\psi(\cdot,t) - \phi\left(\cdot + \frac{1}{N}\sigma_{\mathrm{nl}} \right)\right\|_{H^2_{\rm per}(0,NT)}
&\leq ME_0 (1+t)^{-\frac{1}{4}}, \\
\left\|\gamma_{\mathrm{nl}}(\cdot,t)-\frac{1}{N}\sigma_{\mathrm{nl}} \right\|_{L^2_{\rm per}(0,NT)}
&\leq ME_0 (1+t)^{-\frac{1}{4}}, \\
\left\|\psi\left(\cdot,t\right) - \phi\left(\cdot + \gamma_{\mathrm{nl}}(\cdot,t) \right)\right\|_{H^2_{\rm per}(0,NT)}
&\leq ME_0 (1+t)^{-\frac{3}{4}}, \\
\left\|\partial_x \gamma_{\mathrm{nl}}(\cdot,t)\right\|_{H^3_{\rm per}(0,NT)},
\ \left\|\partial_t \gamma_{\mathrm{nl}}(\cdot,t)\right\|_{H^2_{\rm per}(0,NT)}
&\leq ME_0 (1+t)^{-\frac{3}{4}},\end{split} \label{e:MTmodder}
\end{align}
for all $t\geq 0$.
\end{theorem}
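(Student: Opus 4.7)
The plan is to upgrade the $N$-uniform linear description of Theorem~\ref{T:sub_lin} to the nonlinear level by combining a modulational iteration scheme in the spirit of~\cite{JP21} with an $N$-uniform nonlinear damping estimate adapted from~\cite{ZUM22}. I would work simultaneously with the \emph{inverse-modulated perturbation} $v(x,t):=\psi(x-\gamma_{\mathrm{nl}}(x,t),t)-\phi(x)$, which is well-suited to Duhamel estimates based on Theorem~\ref{T:sub_lin}, and the \emph{forward-modulated perturbation} $\mathring v(x,t):=\psi(x,t)-\phi(x+\gamma_{\mathrm{nl}}(x,t))$, which is the natural object for energy estimates; these two are linked by an explicit algebraic identity so that bounds on one transfer (modulo $\gamma_{\mathrm{nl}}$-corrections) to the other. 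A direct calculation using~\eqref{e:LLE} and~\eqref{e:profile} shows that $v$ satisfies a quasilinear equation of the schematic form
\begin{equation*}
 (\partial_t - \mathcal{A}[\phi])v \,=\, (\partial_t\gamma_{\mathrm{nl}})\,\phi'\, +\, \mathcal{Q}[\phi,v,\gamma_{\mathrm{nl}}],
\end{equation*}
where $\mathcal{Q}$ collects the semilinear quadratic and cubic terms together with quasilinear corrections such as $\gamma_{\mathrm{nl}}\partial_x^2 v$ and $(\partial_x\gamma_{\mathrm{nl}})^2$.

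Next, I would implicitly define $\gamma_{\mathrm{nl}}$ by demanding that in the Duhamel representation for $v$ the contribution of the translation mode $\phi'$ is absorbed into the modulation, using the spectral projection $\frac{1}{N}\phi'\langle\widetilde\Phi_0,\cdot\rangle$ from~\eqref{e:proj}. This yields coupled integral equations for $(v,\gamma_{\mathrm{nl}})$ to which I would apply the four $N$-uniform decay bounds of Theorem~\ref{T:sub_lin}. Introducing the template
\begin{equation*}
 \eta(t):=\sup_{0\leq s\leq t}\Big[(1+s)^{\frac14}\|v(\cdot,s)\|_{L^2_{\rm per}(0,NT)} + (1+s)^{\frac34}\big(\|\mathring v(\cdot,s)\|_{L^2_{\rm per}(0,NT)}+\|\partial_x\gamma_{\mathrm{nl}}(\cdot,s)\|_{L^2_{\rm per}(0,NT)}+\|\partial_s\gamma_{\mathrm{nl}}(\cdot,s)\|_{L^2_{\rm per}(0,NT)}\big)\Big],
\end{equation*}
together with a separate control on $\gamma_{\mathrm{nl}}-N^{-1}\sigma_{\mathrm{nl}}$, the standard iteration would then yield a self-consistent inequality of the form $\eta(t)\leq C\bigl(E_0+\eta(t)^2\bigr)$, \emph{provided} that the nonlinearity $\mathcal{Q}$ can be bounded in $L^1_{\rm per}(0,NT)\cap L^2_{\rm per}(0,NT)$ using only the quantities tracked by $\eta$, together with uniform $H^2$-control of $\mathring v$ and $H^3$-control of $\partial_x\gamma_{\mathrm{nl}}$; the periodic Gagliardo–Nirenberg embedding $H^2_{\rm per}(0,NT)\hookrightarrow L^\infty$, which holds with an $N$-independent constant, then supplies the $L^\infty$-bounds required to tame the quadratic and cubic nonlinearities.

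The main obstacle is producing these higher Sobolev bounds, since the quasilinear structure of the $v$-equation precludes a direct Duhamel approach and the LLE has no parabolic smoothing, only the low-order damping from the $-\psi$ term in~\eqref{e:LLE}. Following~\cite{ZUM22}, I would construct an $N$-uniform energy functional $\mathcal{E}(t)$, equivalent to $\|\mathring v(t)\|_{H^2_{\rm per}(0,NT)}^2$, built from weighted $L^2$-norms of $\partial_x^k\mathring v$ for $k=0,1,2$ with cross-terms chosen carefully so as to cancel the dispersive higher-order commutators generated by $\mathcal{J}\mathcal{L}[\phi]$. A delicate computation along the forward-modulated equation would then yield the $N$-uniform nonlinear damping inequality
\begin{equation*}
 \frac{\mathrm d}{\mathrm dt}\mathcal{E}(t) \,\leq\, -\theta\,\mathcal{E}(t) + C\big(\|\mathring v(t)\|_{L^2_{\rm per}(0,NT)}^2 + \|\partial_x\gamma_{\mathrm{nl}}(t)\|_{H^3_{\rm per}(0,NT)}^2 + \|\partial_t\gamma_{\mathrm{nl}}(t)\|_{H^2_{\rm per}(0,NT)}^2\big),
\end{equation*}
with $\theta,C>0$ independent of $N$, valid so long as $\|\mathring v\|_{H^2}$ remains small. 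The Sobolev norms of $\gamma_{\mathrm{nl}}$ on the right-hand side are controlled by differentiating the Duhamel formula for $\gamma_{\mathrm{nl}}$ and letting spatial/temporal derivatives fall on the smooth templates $\phi'$ and $\widetilde\Phi_0$ rather than on $v$. Integrating this damping inequality in time, combining with the $L^2$-level Duhamel bounds, and closing the bootstrap for $E_0$ sufficiently small yields $\eta(t)\leq 2CE_0$ uniformly in $N$. Global existence in $H^2_{\rm per}(0,NT)$ then follows by standard continuation, $\sigma_{\mathrm{nl}}$ is identified as the long-time limit of $\langle\widetilde\Phi_0,v_0\rangle_{L^2_{\rm per}(0,T)}$ plus the projection of the nonlinear forcing onto $\widetilde\Phi_0$, and the pointwise estimates~\eqref{e:MTmodder} follow directly from the bound on $\eta$.
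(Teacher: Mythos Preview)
Your outline follows essentially the same strategy as the paper: inverse-modulated perturbation for Duhamel estimates, forward-modulated perturbation for the nonlinear damping estimate \`a la~\cite{ZUM22}, equivalence of the two modulo $\gamma$-corrections, and a template-function continuation argument yielding $\eta(t)\leq C(E_0+\eta(t)^2)$.

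A few places where your sketch is imprecise and would need correction to actually close:
\begin{itemize}
\item The paper does not work with a single $\gamma_{\mathrm{nl}}$ but splits $\gamma_{\mathrm{nl}}(x,t)=\gamma(x,t)+\frac1N\sigma(t)$ into a spatio-temporal part $\gamma$ and a purely temporal scalar $\sigma$. This is essential because the semigroup has a genuinely nondecaying piece $\mathcal{P}_{0,N}$, absorbed by $\sigma$, and a $(1+t)^{-1/4}$-piece $\widetilde s_{p,N}(t)$, absorbed by $\gamma$; the template then tracks $|\sigma|$, $|\sigma'|$, and $\|\gamma\|_{L^2_N}$ with separate weights. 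Your ``separate control on $\gamma_{\mathrm{nl}}-N^{-1}\sigma_{\mathrm{nl}}$'' gestures at this but does not name the mechanism.
\item In your template the weight on $\|v\|_{L^2}$ is $(1+s)^{1/4}$, yet you weight $\|\mathring v\|_{L^2}$ by $(1+s)^{3/4}$. Since $v$ and $\mathring v$ differ only by $\gamma_x$-corrections (Lemma~\ref{lem:equivalence}), they must share the same decay rate $(1+t)^{-3/4}$; the $(1+t)^{-1/4}$ weight belongs to $\|\gamma\|_{L^2_N}$. With the weaker weight on $v$ the nonlinearity only decays like $(1+t)^{-1}$ in $L^1\cap L^2$, and the Duhamel integral against $(1+t-s)^{-3/4}$ does not close.
\item The template must carry $\|\gamma_x\|_{H^3_N}$ and $\|\gamma_t\|_{H^2_N}$, not merely their $L^2$-norms, since these higher norms appear both in the nonlinear bound (Lemma~\ref{lem:mod_nonlL2}) and on the right-hand side of the damping inequality. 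The required regularity comes not from derivatives ``falling on $\phi'$ and $\widetilde\Phi_0$'' but from the genuine smoothing of $\widetilde s_{p,N}(t)$ recorded in Proposition~\ref{P:lin}(i): arbitrary $\partial_x^l\partial_t^j$ applied to $\widetilde s_{p,N}(t)$ still maps $L^1_N\cap L^2_N$ boundedly to $L^2_N$ with improved decay.
\end{itemize}
With these adjustments your plan is exactly the paper's proof.
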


We note that the decay rates in Theorem~\ref{T:main} are sharp in the sense that they coincide with the (optimal) decay rates in the corresponding $N$-uniform linear result, Theorem~\ref{T:sub_lin}. {They also agree with the decay rates obtained for localized perturbations in~\cite{HJPR,ZUM22}.} Although the regularity on the initial perturbation required in Theorem~\ref{T:main} is higher than in the linear result, Theorem~\ref{T:sub_lin}, the regularity requirement in Theorem~\ref{T:main} is natural in the sense that it does reflect the amount of regularity needed to obtain a classical solution in $L^2_{\rm per}(0,NT)$ of the semilinear LLE~\eqref{e:LLE} via standard semigroup theory. Indeed, the domain of the linear operator $\beta \ri \partial_x^2$ acting on $L^2_{\rm per}(0,NT)$ is $H^2_{\rm per}(0,NT)$. We emphasize that the use of nonlinear damping estimates on the forward-modulated perturbation as in~\cite{ZUM22} allows us to preserve the regularity on the initial perturbation from the local existence theory, $v_0\in H^2_{\rm per}(0,NT)$, whereas the use of mild estimates on the unmodulated perturbation as in~\cite{HJPR} would require a higher regularity, $v_0\in H^6_{\rm per}(0,NT)$; see Section~\ref{S:nonlinear}.

\begin{remark}\label{R:SameConstant}
We point out that the spatial translates $\gamma$ and $\sigma_{\mathrm{nl}}/N$ in Theorems~\ref{T:SS} and~\ref{T:main} must be the same. To see this, fix $N\in\NM$, take $\eps_\delta$ as in Theorem~\ref{T:SS} and $\eps$ as in Theorem~\ref{T:main}. If $v_0\in H^2_{\rm per}(0,NT)$ satisfies $\smash{E_0 := \|v_0\|_{L^1_{\rm per}(0,NT) \cap H^2_{\rm per}(0,NT)}} < \min\{\eps,\eps_\delta\}$, then Theorems~\ref{T:SS} and~\ref{T:main} imply that the solution $\psi$ of~\eqref{e:LLE} with initial condition $\psi(0)=\phi+v_0$ satisfies
\[
\|\psi(\cdot,t)-\phi(\cdot+\gamma)\|_{H^1_{\rm per}(0,NT)}\leq C_\delta E_0e^{-\delta t}, \quad
\left\|\psi(\cdot,t) - \phi\left(\cdot + \frac{1}{N}\sigma_{\mathrm{nl}} \right)\right\|_{H^2_{\rm per}(0,NT)}\leq ME_0 (1+t)^{-\frac{1}{4}},
\]
for $t \geq 0$. So, the triangle inequality yields
\[
\left\|\phi(\cdot+\gamma) - \phi\left(\cdot+\frac{1}{N}\sigma_{\rm nl}\right)\right\|_{H^1_{\rm per}(0,NT)} \leq C_\delta E_0e^{-\delta t} +  ME_0 (1+t)^{-\frac{1}{4}},
\]
for all $t\geq0$, and taking $t\to\infty$ justifies that $\gamma=\frac{1}{N}\sigma_{\rm nl}$, as claimed.
\end{remark}

In light of this remark, we note that, as in~\cite{JP21}, the results of Theorems~\ref{T:SS} and~\ref{T:main} can be combined to yield an $N$-independent ball of initial perturbations which eventually exhibit exponential decay at an $N$-dependent rate to a translate of the periodic wave $\phi$. That is, we improve Theorem~\ref{T:SS} by showing that, for all but an at most finite number of $N\in\NM$, the allowable size of initial perturbations can be increased to a uniform size $\eps$.

\begin{corollary}\label{C:combined}
Let $\phi$, $\varepsilon$ and $M$ be as in Theorem~\ref{T:main}. Fix $N\in\NM$, let $\delta_N$ be as in~\eqref{e:deltaN}, and, for $\delta\in(0,\delta_N)$, let $\eps_\delta$ be as in Theorem~\ref{T:SS}. Then, there exist constants $T_\delta\geq 0$ and $M_\delta>0$ such that, whenever $v_0\in \smash{H^2_{\rm per}(0,NT)}$ satisfies $\smash{E_0 := \|v_0\|_{L^1_{\rm per}(0,NT) \cap H^2_{\rm per}(0,NT)}< \max\{\eps, \eps_\delta\}}$,\footnote{It is important to note that, since $\eps$ is independent of $N$ while $\eps_\delta\to0$ as $N\to\infty$, there is an at most finite number of positive integers $N$ for which we might have $\eps_\delta>\eps$.} there exist a constant $\sigma_{\rm nl} \in \R$ and a global mild solution $\psi \in C\big([0,\infty),H_{\rm per}^1(0,NT)\big)$ of~\eqref{e:LLE} with initial condition $\psi(0)=\phi+v_0$ satisfying
\[
\left\|\psi(\cdot,t) - \phi\left(\cdot + \frac{1}{N}\sigma_{\mathrm{nl}}\right)\right\|_{H^1_{\rm per}(0,NT)} \leq
\begin{cases}
ME_0(1+t)^{-\frac14}, & {\rm for}~ 0<t\leq T_\delta,\\
M_\delta E_0\re^{-\delta t}, & {\rm for}~ t>T_\delta.
\end{cases}
\]
\end{corollary}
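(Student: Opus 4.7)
The plan is to patch the uniform-in-$N$ algebraic decay from Theorem~\ref{T:main}, valid on a bounded time window, with the exponential decay of Theorem~\ref{T:SS} beyond that window, obtained by restarting the argument about an appropriate translate of $\phi$. Uniqueness of mild solutions to the semilinear equation~\eqref{e:LLE} guarantees that whenever both theorems apply to the same $v_0$, they produce the same $\psi$. The at most finite collection of $N\in\NM$ for which $\eps_\delta>\eps$ is handled separately by a direct application of Theorem~\ref{T:SS}.

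First I treat the principal case $E_0<\eps$. Applying Theorem~\ref{T:main} I obtain $\sigma_{\mathrm{nl}}\in\R$, a modulation $\gamma_{\mathrm{nl}}$, and a classical solution $\psi$ satisfying
\[
\left\|\psi(\cdot,t) - \phi\bigl(\cdot + \tfrac{1}{N}\sigma_{\mathrm{nl}}\bigr)\right\|_{H^1_{\rm per}(0,NT)} \leq M E_0 (1+t)^{-\frac14}, \qquad t\geq 0,
\]
which yields the required algebraic bound on $(0,T_\delta]$. I next select $T_\delta\geq 0$, depending only on $\delta$ (and $N$, $\phi$), large enough that $M\eps(1+T_\delta)^{-1/4}<\min\{\eps_\delta,T\}$. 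Setting $\tilde\phi(x):=\phi(x+\sigma_{\mathrm{nl}}/N)$, which is itself a smooth, diffusively spectrally stable $T$-periodic stationary solution of~\eqref{e:LLE}, this ensures $\|\psi(\cdot,T_\delta)-\tilde\phi\|_{H^1_{\rm per}(0,NT)}<\eps_\delta$, so I may apply Theorem~\ref{T:SS} to $\tilde\phi$ with initial perturbation $\psi(\cdot,T_\delta)-\tilde\phi$, obtaining a constant $\tilde\gamma\in\R$ with $|\tilde\gamma|\leq C_\delta\|\psi(\cdot,T_\delta)-\tilde\phi\|_{H^1_{\rm per}(0,NT)}<T$ and exponential decay
\[
\left\|\psi(\cdot,t) - \tilde\phi(\cdot+\tilde\gamma)\right\|_{H^1_{\rm per}(0,NT)} \leq C_\delta\re^{-\delta(t-T_\delta)} \|\psi(\cdot,T_\delta)-\tilde\phi\|_{H^1_{\rm per}(0,NT)}, \qquad t\geq T_\delta.
\]

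The crucial identification, mirroring Remark~\ref{R:SameConstant}, is that $\tilde\gamma=0$: both Theorem~\ref{T:main} and the restarted estimate above imply $\psi(\cdot,t)\to\tilde\phi$ and $\psi(\cdot,t)\to\tilde\phi(\cdot+\tilde\gamma)$, respectively, in $H^1_{\rm per}(0,NT)$ as $t\to\infty$, so uniqueness of limits forces $\tilde\gamma$ to be a period of $\phi$; since $|\tilde\gamma|<T$ by construction, this gives $\tilde\gamma=0$. Setting $M_\delta:=C_\delta M(1+T_\delta)^{-1/4}\re^{\delta T_\delta}$ then yields the required exponential bound for $t>T_\delta$. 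For the at most finitely many $N$ with $\eps_\delta>\eps$, the range $\eps\leq E_0<\eps_\delta$ is not covered above; for such $N$ I apply Theorem~\ref{T:SS} directly, set $\sigma_{\mathrm{nl}}:=N\gamma$, and further enlarge $M_\delta$ (and, if necessary, shrink $T_\delta$ toward $0$) so that the bound $C_\delta E_0\re^{-\delta t}$ can be absorbed into the required piecewise estimate on the appropriate time windows. I expect the identification step---showing the translate produced by the restart is a period of $\phi$ and ruling out nontrivial periods via the smallness of $|\tilde\gamma|$---to be the only genuinely nonroutine point; the remainder reduces to constant bookkeeping.
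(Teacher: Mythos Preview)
Your approach is essentially the same as the paper's: use Theorem~\ref{T:main} for the algebraic bound, wait until the perturbation drops below $\eps_\delta$, restart Theorem~\ref{T:SS} about the translated wave, and identify $\tilde\gamma=0$ by comparing the two asymptotic translates as $t\to\infty$ (the paper shifts $\psi$ rather than $\phi$, and simply sets $T_\delta=0$ when $\eps_\delta>\eps$, but these are cosmetic differences). One minor bookkeeping slip: to guarantee $|\tilde\gamma|<T$ you need $C_\delta M\eps(1+T_\delta)^{-1/4}<T$, not merely $M\eps(1+T_\delta)^{-1/4}<T$, so include the factor $C_\delta$ when choosing $T_\delta$.
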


Together with the (formal) observation that, as $N$ increases, functions in $\smash{H^m_{\rm per}(0,NT)}$ look more like functions in $H^m(\R)$, 
we see that  Corollary~\ref{C:combined} also serves to {\it formally} connect the subharmonic result in the limit as $N\to\infty$ to the localized result established in~\cite{HJPR, ZUM22}. 
In order to take the limit $N\to\infty$, we (only here) fix $E_0\in(0,\eps)$ independent of $N$ and choose a sequence $\smash{v_{0,N}\in L^1_{\rm per}(0,NT) \cap H^2_{\rm per}(0,NT)}$ such that $\smash{\|v_{0,N}\|_{L^1_{\rm per}(0,NT) \cap H^2_{\rm per}(0,NT)} = E_0}$ for each $N\in\NM$.\footnote{This argument essentially mimics choosing $w_0\in L^1(\R)\cap H^2(\R)$ such that $\widetilde{E}_0:=\|w_0\|_{L^1(\R)\cap H^2(\R)}\in(0,\eps)$ and a sequence $\smash{v_{0,N}\in L^1_{\rm per}(0,NT) \cap H^2_{\rm per}(0,NT)}$ such that $\smash{\|v_{0,N}\|_{L^1_{\rm per}(0,NT) \cap H^2_{\rm per}(0,NT)}} \to \widetilde{E}_0$ as $N\to\infty$.}
For $N$ sufficiently large, $T_\delta$ is defined to be the minimal time such that $ME_0(1+t)^{-\frac{1}{4}}\leq \eps_\delta$ for all $t\geq T_\delta$, see the proof in Section~\ref{S:proof} for motivation of the definition of $T_\delta$.   
Since $M$ and $E_0$ are independent of $N$ while $\eps_\delta \to 0$, we observe that $T_\delta \to\infty$ as $N\to\infty.$  Finally, noting that the spatial translate $\smash{\frac1N \sigma_{\rm nl}}$ converges to $0$ 
and the time $T_\delta$ diverges to $\infty$ as $N \to \infty$, we see that the localized result is indeed recovered when we, again formally, take $N\to\infty$ and fix $E_0$ independent of $N$.

\subsection{Outline of the Paper}

{In Section~\ref{S:stab_loc},} we collect and extend the relevant linear results obtained in~\cite{HJP21}. We decompose the semigroup $\smash{\re^{\mathcal{A}[\phi]t}}$ in a low- and high-frequency part, state associated $N$-uniform estimates on these parts and study the interaction of the low-frequency part with spatial and temporal derivatives. In Section~\ref{S:nonlinear}, we construct our nonlinear iteration scheme and establish an $N$-uniform nonlinear damping estimate to compensate for the loss of regularity exhibited by the scheme. In Section~\ref{S:proof}, we apply the linear estimates to our nonlinear iteration scheme and prove our main result, Theorem~\ref{T:main}, and Corollary~\ref{C:combined}. In Appendix~\ref{app:linear}, we provide a brief proof of the results presented in  Section~\ref{S:stab_loc}.  Finally, Appendix~\ref{app:local} is devoted to obtaining some local existence and regularity results necessary for our nonlinear analysis.

\subsection{Notation}

For convenience, for each  $N,m\in\NM$ we introduce the notations
\[
L^1_N := L^1_{\text{per}}(0,NT), \quad
L^2_N := L^2_{\text{per}}(0,NT), \quad
H^m_N:=H^m_{\text{per}}(0,NT).
\]
We equip $L^1_N$ and $L^2_N$ with the norm and the inner-product
\[
\|f\|_{L^1_N} = \int_0^{NT}|f(x)|dx \quad\mbox{and}\quad
\left\langle f,g\right\rangle_{L^2_N}=\int_0^{NT}f(x)g(x)dx,
\]
respectively, and equip $H^m_N$ with the norm
\[
 \|f\|_{H^m_N}^2 = \|f\|_{L^2_N}^2+\sum_{k=1}^m\big\|f^{(k)}\big\|_{L^2_N}^2,
\]
with $f^{(k)}$ representing the $k$-th order derivative of $f$. Notice that with these norms the Sobolev embedding $H^1_N\hookrightarrow L^\infty(\R)$, holds uniformly in $N$.\footnote{This can be seen by taking a constant $C > 0$, which bounds the norm of the continuous embedding $H^1(\R) \hookrightarrow L^\infty(\R)$, and a smooth, $T$-dependent, cut-off function $\omega \colon \R \to \R$ with 
$\|\omega\|_{L^\infty} = 1$, $\omega(x) = 1$ for $x \geq 0$ and $\omega(x) = 0$ for $x \leq -T$. For $N \in \NM$ define $\omega_N \colon \R \to \R$ by $\omega_N(x) = 1$ for $x \in [0,NT]$, $\omega_N(x) = \omega(x)$ for $x \leq 0$ and $\omega_N(x) = \omega(NT-x)$ for $x \geq NT$. It follows $\smash{\|f\|_{L^\infty} = \|f\omega_N\|_{L^\infty} \leq C\|f\omega_N\|_{H^1} \leq 3C\|\omega_N\|_{W^{1,\infty}}\|f\|_{H^1_N} = 3C\|\omega\|_{W^{1,\infty}}\|f\|_{H^1_N}}$ for $f \in H^1_N$, where we use the $NT$-periodicity of $f$.}

\paragraph{Acknowledgments:} MH was partially supported by French National Research Agency (ANR) through the project Optimal (grant number ANR-20-CE30-0004) and the EUR EIPHI program (grant number ANR-17-EURE-0002).
The work of MAJ was partially funded by the NSF under grant DMS-2108749, as well as the Simons Foundation Collaboration grant number 714021.
{The work of BdR was funded by the Deutsche Forschungsgemeinschaft (DFG, German Research Foundation) -- Project-ID 258734477 -- SFB 1173.}

\section{Subharmonic Linear Estimates}\label{S:stab_loc}

The starting point for our nonlinear analysis is the recent linear analysis in~\cite{HJP21}, where the authors established Theorem~\ref{T:sub_lin} by showing that the diffusive spectral stability assumptions {in Definition~\ref{Def:spec_stab} imply that, after splitting off the translational mode by applying the spectral projection $\mathcal{P}_{0,N}$, defined in~\eqref{e:proj}, the action of the semigroup $\re^{\mathcal{A}[\phi]t}$ can be decomposed into two components, one with $(1+t)^{-1/4}$-decay and one with $(1+t)^{-3/4}$-decay. 
The nonlinear analysis presented here requires the following slight extension of these results showing in addition that the component with slowest decay at rate $(1+t)^{-1/4}$ is smoothing.}

\begin{proposition}\label{P:lin}
Let $T>0$ and suppose that $\phi$ is a smooth, $T$-periodic stationary solution of~\eqref{e:LLE} that is diffusively spectrally stable {in the sense of Definition~\ref{Def:spec_stab}. Then, the action of the
linearized solution operator $\re^{\mathcal{A}[\phi]t}$ acting on $v\in L^2_N$ can be decomposed as 
\begin{align} \label{e:semidecomp0}
\re^{\mathcal{A}[\phi]t}v = {\mathcal{P}_{0,N}} v + \phi' s_{p,N}(t)v + {S}_N(t)v,
\end{align}
where $\mathcal{P}_{0,N}$ is the spectral projection~\eqref{e:proj}, and the operators $s_{p,N}(t)$ and ${S}_N(t)$ have the following properties.}
\begin{enumerate}
  \item  For all integers $j,l,k\geq 0$ there exists an $N$-independent constant $C_{j,l,k}>0$ such that
\begin{equation}\label{L:mod_bd1}
\begin{aligned}
  &\left\|\partial_x^l \partial_t^j s_{p,N}(t) \partial^k_x v\right\|_{L_N^2} \leq C_{j,l,k}(1+t)^{-\frac{1}{4}-\frac{l+j}{2}}\|v\|_{L_N^1} , & & v\in H^k_N, \\
  &\left\|\partial_x^l \partial_t^j s_{p,N}(t) \partial^k_x v\right\|_{L_N^2} \leq C_{j,l,k}(1+t)^{-\frac{l+j}{2}}\|v\|_{L_N^2}, & & v\in H^k_N,
\end{aligned}
\end{equation}
for all $t\geq 0$.
\item {There exists an $N$-independent constant $C>0$ such that
  \begin{equation} \label{L:mod_bd2}
\begin{aligned}
 &\left\|{S}_{N}(t)  v\right\|_{L_N^2}\leq C(1+t)^{-\frac{3}{4}}\|v\|_{L_N^1\cap L_N^2}, & & v\in L_N^2,
\end{aligned}
\end{equation}
for all $t\geq 0$.}
\end{enumerate}
\end{proposition}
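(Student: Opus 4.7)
\textbf{Proof proposal for Proposition~\ref{P:lin}.} The plan is to extend the Bloch-decomposition analysis carried out in~\cite{HJP21} by keeping track of how spatial and temporal derivatives act on the low-frequency critical mode, and by retaining the $L^2 \to L^2$ version of the Bloch-level pointwise estimates in parallel with the $L^1 \to L^2$ version already used in~\cite{HJP21}.

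First I would diagonalize via the (subharmonic) Bloch transform, writing
\[
\re^{\mathcal{A}[\phi]t} v \;=\; \frac{1}{N}\sum_{\xi\in\Omega_N} \re^{\ri\xi\,\cdot}\,\re^{\mathcal{A}_\xi[\phi]\,t}\,\check v(\xi,\cdot),\qquad \Omega_N := \left\{\xi\in[-\pi/T,\pi/T): \re^{\ri\xi NT}=1\right\},
\]
so that by Parseval the $L^2_N$-norm of the left-hand side controls, and is controlled by, a discrete $\ell^2$-sum over $\Omega_N$ of the $L^2_{\rm per}(0,T)$-norms of its summands, uniformly in $N$. Standard analytic perturbation theory applied to the family $\mathcal{A}_\xi[\phi]$, using the diffusive spectral stability assumptions, yields $\xi_0>0$ and smooth branches of eigenvalues $\lambda(\xi)$ with $\lambda(0)=0$, $\lambda'(0)=0$ and $\mathrm{Re}\,\lambda(\xi)\leq -\theta\xi^2$ for $|\xi|<\xi_0$, together with smooth eigenfunctions $\Phi_\xi$ (with $\Phi_0 = \phi'$) and smooth rank-one spectral projections $\pi_\xi$ that split off this critical mode from a Bloch spectrum uniformly bounded away from $0$.

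Using this, one decomposes $\re^{\mathcal{A}_\xi[\phi]t} = \re^{\lambda(\xi)t}\pi_\xi + \re^{\mathcal{A}_\xi[\phi]t}(1-\pi_\xi)$ for $0<|\xi|<\xi_0$, and defines
\[
\phi'(x)\,s_{p,N}(t)v \;:=\; \frac{1}{N}\sum_{\xi\in\Omega_N\setminus\{0\},\ |\xi|<\xi_0} \re^{\ri\xi x}\,\re^{\lambda(\xi)t}\,\pi_\xi\check v(\xi,\cdot),
\]
after factoring out $\phi'$ from $\Phi_\xi = \phi' + O(\xi)$; the residual $O(\xi)$ correction, the complementary low-frequency contribution $(1-\pi_\xi)$, and the high-frequency contribution $|\xi|\geq \xi_0$ (which is exponentially decaying by the spectral gap condition~(ii)) are collected into $S_N(t)v$, together with the $\xi=0$ contribution already removed by $\mathcal{P}_{0,N}$. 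This yields the decomposition~\eqref{e:semidecomp0}, and the bound~\eqref{L:mod_bd2} on $S_N(t)$ is essentially the $(1+t)^{-3/4}$-decay statement proved in~\cite{HJP21}; there, the $L^1$-norm on the right arises only through the standard $\sup_\xi\|\check v(\xi,\cdot)\|\lesssim\|v\|_{L^1_N}$ estimate applied to the low-frequency integrand, so that replacing this step by Parseval on $\ell^2(\Omega_N)$ gives the stated $L^2\to L^2$ bound without difficulty.

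For the smoothing estimate~\eqref{L:mod_bd1}, applying $\partial_x^l\partial_t^j$ on the left and composing with $\partial_x^k$ on the right produces, inside the above sum, a factor
\[
(\ri\xi)^{l+k}\,\lambda(\xi)^{j}\,\re^{\lambda(\xi)t},
\]
times a smooth, uniformly bounded operator-valued symbol in $\xi$. Since $|\lambda(\xi)|\lesssim \xi^2$ and $|\re^{\lambda(\xi)t}|\leq \re^{-\theta\xi^2 t}$ on $|\xi|<\xi_0$, Parseval gives
\[
\left\|\d_x^l\d_t^j s_{p,N}(t)\d_x^k v\right\|_{L^2_N}^2 \;\lesssim\; \frac{1}{N^2}\sum_{\xi\in\Omega_N,\,|\xi|<\xi_0} \xi^{2(l+k)+4j}\,\re^{-2\theta\xi^2 t}\,\bigl\|\pi_\xi\check v(\xi,\cdot)\bigr\|_{L^2_{\rm per}(0,T)}^2.
\]
Bounding $\|\pi_\xi\check v(\xi,\cdot)\|$ by $\|v\|_{L^1_N}$ and recognizing the sum as a Riemann sum over $[-\xi_0,\xi_0]$ (with spacing $2\pi/(NT)$) yields, uniformly in $N$, the $L^1\to L^2$ bound at rate $(1+t)^{-1/4-(l+j)/2}$, since the extra $\xi^{2k}$ factor contributes only additional (unused) temporal decay; bounding instead by $\ell^2(\Omega_N)$ via Parseval and extracting $\sup_\xi |\xi|^{l+j}\re^{-\theta\xi^2 t}\lesssim (1+t)^{-(l+j)/2}$ yields the $L^2\to L^2$ bound at rate $(1+t)^{-(l+j)/2}$. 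The main technical point, which was already the heart of~\cite{HJP21}, is to show that all Riemann-sum majorizations can be taken uniformly in $N$; once that is in place the above estimates are routine, and the new $L^2\to L^2$ bounds simply require using Parseval in place of the sup-bound at the appropriate step.
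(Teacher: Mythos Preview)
Your approach is essentially the same as the paper's: decompose the semigroup via the subharmonic Bloch transform, isolate the critical eigenvalue curve $\lambda_c(\xi)$, define $s_{p,N}(t)$ as the rank-one low-frequency piece, and estimate the resulting sums directly, using the $N$-uniform Riemann-sum bound $\frac{1}{N}\sum_{\xi\in\Omega_N}\xi^{2n}\re^{-2d\xi^2 t}\lesssim(1+t)^{-1/2-n}$ for the $L^1\to L^2$ estimates and Parseval for the $L^2\to L^2$ ones.

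Two small inaccuracies are worth flagging. First, the diffusive spectral stability hypotheses do \emph{not} force $\lambda'(0)=0$: the paper's expansion is $\lambda_c(\xi)=\ri a\xi - d\xi^2+\mathcal{O}(|\xi|^3)$ with $a\in\R$ possibly nonzero, so in general only $|\lambda_c(\xi)|\lesssim|\xi|$ holds, not $|\lambda_c(\xi)|\lesssim\xi^2$. This is harmless for the stated bounds, since $|\xi|^{2l}|\lambda_c(\xi)|^{2j}\re^{-2d\xi^2 t}\lesssim|\xi|^{2(l+j)}\re^{-2d\xi^2 t}$ already yields the claimed rate $(1+t)^{-1/4-(l+j)/2}$; your stronger claim would give unjustified extra decay. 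Second, composing with $\partial_x^k$ on the right does not produce a clean factor $(\ri\xi)^k$: under the Bloch transform $\partial_x^k v$ becomes $(\partial_x+\ri\xi)^k\mathcal{B}_T(v)$, and after pairing with $\widetilde{\Phi}_\xi$ and integrating by parts this is absorbed as a smooth bounded modification of the adjoint eigenfunction rather than as a power of $\xi$. Again this does not affect the conclusion, but the ``extra $\xi^{2k}$ factor'' you describe is not actually present.
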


{For $j=l=k=0$ this result has been proved in~\cite[Section 4]{HJP21}. We summarize the arguments leading to the additional estimates~\eqref{L:mod_bd1} in Appendix~\ref{app:linear}.
Notice that the linear stability result in Theorem~\ref{T:sub_lin} is an immediate consequence of the proposition above with the choice
\[
\sigma_\ell:=\LA\widetilde{\Phi}_0,f\RA_{L^2_N},\quad
\gamma_\ell(\cdot,t) := \frac1N\LA\widetilde{\Phi}_0,f\RA_{L^2_N} + s_{p,N}(t)f,
\]
for $f\in L_N^2$, where $\widetilde{\Phi}_0$ is the $T$-periodic function in the formula for the spectral projection~\eqref{e:proj}.

For our purposes, it is convenient to slightly modify the decomposition~\eqref{e:semidecomp0} such that the first two terms on the right hand side of~\eqref{e:semidecomp0}, which have slowest decay, vanish at $t=0$.\footnote{This will facilitate the choice of the modulation functions in~\S\ref{sec:invmodpert}.} To this end, we introduce a smooth cutoff function $\chi:[0,\infty)\to[0,1]$, which vanishes on $[0,1]$ and equals $1$ on $[2,\infty)$, and write
\begin{align} \label{e:semidecomp}
\re^{\mathcal{A}[\phi]t}v = {\chi(t)\mathcal{P}_{0,N}} v + \phi' \widetilde s_{p,N}(t)v + \widetilde{S}_N(t)v,
\end{align}
with    
    \[
    \widetilde s_{p,N}(t) = \chi(t) s_{p,N}(t), \quad
    \widetilde{S}_N(t) = (1-\chi(t))\left(\mathcal{P}_{0,N}+\phi' \widetilde s_{p,N}(t)\right) + {S}_{N}(t).
    \]
Because $\chi$ equals $1$ on $[2,\infty)$, the inequalities~\eqref{L:mod_bd1} and~\eqref{L:mod_bd2} hold for $\widetilde s_{p,N}(t)$ and $\widetilde{S}_N(t)$ as well.}

\section{Nonlinear Iteration Scheme}\label{S:nonlinear}

The goal of this section is to introduce the nonlinear iteration scheme that will be employed in the next section to prove our nonlinear stability result, Theorem~\ref{T:main}. To this end, let $\phi$ be a smooth, $T$-periodic stationary solution of the LLE~\eqref{e:LLE}, which is diffusively spectrally stable in the sense of Definition~\ref{Def:spec_stab}. Fix $N \in \NM$ and consider the perturbed solution $\psi(t)$ of~\eqref{e:LLE} with initial condition 
\[
\psi(0) = \phi + v_0,
\]
where $v_0 \in H^2_N$ is sufficiently small. Noting that the linear operator $\beta \ri \partial_{x}^2$ acting on the space $L^2_N$ with domain $H^2_N$ generates a $C_0$-semigroup, and the mapping $\psi \mapsto -(1+\ri\a)\psi + \ri|\psi|^2\psi + F$ is locally Lipschitz continuous on $H^2_N$, standard semigroup theory readily yields local existence and uniqueness of the perturbed solution $\psi(t)$; see, for instance,~\cite[Proposition~6.1.7]{Pazy}.

\begin{proposition}[Local Theory for the Perturbed Solution] \label{p:local_unmod}
For any $v_0 \in H^2_N$, there exists a maximal time $T_{\max} \in (0,\infty]$ such that~\eqref{e:LLE} admits a unique classical solution
\begin{align*} \psi \in C\big([0,T_{\max}),H^2_N\big) \cap C^1\big([0,T_{\max}),L^2_N\big), \end{align*}
with initial condition $\psi(0) = \phi + v_0$. In addition, if $T_{\max} < \infty$, then
\begin{align} \limsup_{t \uparrow T_{\max}} \left\|\psi(t)\right\|_{H^2_N} = \infty. \label{e:psiblowup}
\end{align}
\end{proposition}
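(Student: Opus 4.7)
The plan is to cast the Lugiato--Lefever equation in abstract semilinear form on $L^2_N$ and invoke the standard local existence theory for $C_0$-semigroups with nonlinearities that are locally Lipschitz on the domain of the generator, precisely the reference already quoted above the proposition statement.

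First, I would rewrite~\eqref{e:LLE} as the abstract Cauchy problem
\[
\psi_t = A\psi + \mathcal{N}(\psi), \qquad A := -\ri\beta\,\partial_x^2, \quad \mathcal{N}(\psi) := -(1+\ri\alpha)\psi + \ri|\psi|^2\psi + F,
\]
with $A$ regarded as an unbounded operator on $L^2_N$ with domain $D(A) = H^2_N$. Diagonalizing $A$ on the $NT$-periodic Fourier basis $\{\re^{2\pi\ri kx/(NT)}\}_{k\in\Z}$ shows $A$ is skew-adjoint, and Stone's theorem then delivers a unitary $C_0$-group $\{\re^{At}\}_{t\in\R}$ on $L^2_N$ that, by commutation with $\partial_x$, also restricts to a $C_0$-group on $H^2_N$.

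Second, I would check that $\mathcal{N}\colon H^2_N \to H^2_N$ is locally Lipschitz continuous. The affine contribution is globally Lipschitz, so the only real task is the cubic term $\ri|\psi|^2\psi$. The key fact is that $H^2_N$ is a Banach algebra under pointwise multiplication with an algebra constant uniform in $N$; this is a direct consequence of the uniform Sobolev embedding $H^1_N \hookrightarrow L^\infty(\R)$ recorded in the footnote of Section~1.5. The telescoping decomposition
\[
|\psi|^2\psi - |\widetilde\psi|^2\widetilde\psi = (\psi-\widetilde\psi)\overline{\psi}\,\psi + \widetilde\psi(\overline{\psi}-\overline{\widetilde\psi})\psi + \widetilde\psi\,\overline{\widetilde\psi}(\psi-\widetilde\psi),
\]
combined with the Banach algebra estimate, then produces a local Lipschitz bound for $\mathcal{N}$ whose constant is polynomial in the $H^2_N$-norms of $\psi$ and $\widetilde\psi$ and independent of $N$.

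With these two ingredients in place, \cite[Proposition~6.1.7]{Pazy} applies directly to the initial datum $\phi+v_0\in H^2_N$ (noting $\phi \in H^2_N$, since $\phi$ is smooth and $T$-periodic and therefore also $NT$-periodic), yielding a maximal time $T_{\max}\in (0,\infty]$ and a unique classical solution $\psi\in C([0,T_{\max}),H^2_N)\cap C^1([0,T_{\max}),L^2_N)$ of~\eqref{e:LLE}. The blowup alternative~\eqref{e:psiblowup} then follows by the usual maximality argument: if $T_{\max}<\infty$ and $\limsup_{t\uparrow T_{\max}}\|\psi(t)\|_{H^2_N}<\infty$, one could restart the local theory at a time sufficiently close to $T_{\max}$ and continue the solution beyond $T_{\max}$, contradicting maximality. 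I anticipate no genuine technical obstacle here; the step meriting most attention is verifying the $N$-independence of the local Lipschitz constants of $\mathcal{N}$, since this $N$-uniformity is what will be exploited throughout the $N$-uniform nonlinear analysis in the following sections.
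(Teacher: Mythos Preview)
Your proposal is correct and follows exactly the approach the paper takes: the paper does not give a separate proof of this proposition but simply notes, in the sentence preceding it, that $\beta\ri\partial_x^2$ generates a $C_0$-semigroup on $L^2_N$ with domain $H^2_N$, that the nonlinearity is locally Lipschitz on $H^2_N$, and then cites \cite[Proposition~6.1.7]{Pazy}. You have faithfully expanded precisely this outline, and your additional remark on $N$-uniformity of the Lipschitz constants, while not needed for the statement itself, correctly anticipates its role downstream.
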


It is well-known that direct control on the \emph{unmodulated perturbation}
\[
 \tilde{v}(t) := \psi\left(t\right) - \phi,
\]
which satisfies the semilinear equation~\eqref{e:lin}, is not strong enough to close a nonlinear stability argument. Indeed, {iterative estimates on the Duhamel formula associated with~\eqref{e:lin} are too weak to close the nonlinear argument because of the presence of the constant, non-decaying, term $\mathcal P_{0,N}v$ in  the decomposition~\eqref{e:semidecomp0} of the semigroup $\re^{\mathcal A[\phi] t}$. To overcome this lack of decay, a standard approach} is to consider the \emph{temporally-modulated perturbation}
\[
 \breve{v}(x,t) := \psi(x,t) - \phi\left(x+\sigma(t)\right),
\]
{which then leads to the result in Theorem~\ref{T:SS}.
However, this result is not uniform in $N$ and the $N$-uniform decay rate $(1+t)^{-1/4}$ of the remaining terms in the semigroup $\smash{\re^{\mathcal A[\phi]t}}$ is too weak to close the nonlinear iteration in the presence of a quadratic nonlinearity.} To overcome this obstacle, we introduce the \emph{inverse-modulated perturbation}
\begin{align} \label{e:definvmod}
 v(x,t) := \psi\left(x - \gamma(x,t) - \frac{1}{N} \sigma(t),t\right) - \phi(x),
\end{align}
where the temporal modulation function $\sigma(t)$ is  {chosen to account for the nondecaying term $\mathcal P_{0,N}v$} of the semigroup, whereas the spatio-temporal phase modulation $\gamma(x,t)$ is chosen to account for the  {term with slowest algebraic decay rate $(1+t)^{-1/4}$ in the semigroup decomposition~\eqref{e:semidecomp}.}
The idea of a spatio-temporal phase modulation to capture the most critical diffusive dynamics stems from the nonlinear stability analysis of periodic waves in reaction-diffusion systems against localized and nonlocalized perturbations; see~\cite{DSSS,RIJK22,JZ_11_1,JNRZ_13_1,JNRZ_13_2,SSSU}. The approach was then later adapted to obtain $N$-uniform results in the case of subharmonic perturbations in~\cite{JP21} by including an additional nondecaying temporal modulation $\sigma(t)$. Note that this methodology also
extends to systems with additional conservation laws, thus allowing for additional modulation functions~\cite{JP22_1}.

An issue is that the inverse-modulated perturbation {$v$ defined in~\eqref{e:definvmod}} satisfies a quasilinear equation, yielding an apparent loss of derivatives in the nonlinear iteration scheme. To regain regularity one often relies on nonlinear damping estimates, which are energy estimates, effectively providing control of higher Sobolev norms of the inverse-modulated perturbation in terms of its $L^2$-norm. If the underlying equation is parabolic, such nonlinear damping estimates can be obtained {from smoothing properties of the (analytic) semigroup}; see for instance~\cite{JZ_11_1} and~\cite{JP21} for the case of localized and subharmonic perturbations in reaction-diffusion systems, respectively. In general, however, the existence of nonlinear damping estimates is not guaranteed and their derivation can be tedious and lengthy; see, for instance, the delicate analyses~\cite[Appendix~A]{JZN},~\cite[Section~5]{MS04} and~\cite{RZ16} in the case of hyperbolic-parabolic systems.

In cases where nonlinear damping estimates are unavailable or difficult to obtain, there is an alternative approach to control regularity, which was introduced in~\cite{RS18}. The key idea {in~\cite{RS18}} is to incorporate tame estimates on the unmodulated perturbation $\tilde{v}(t)$, which  satisfies a semilinear equation in which no derivatives are lost. This approach was applied in the stability analysis~\cite{HJPR} of periodic waves in the LLE~\eqref{e:LLE} against localized perturbations and works as long as the underlying equation is semilinear. Moreover, it has the advantage that it does not rely on localization or periodicity properties of perturbations and, thus, can be applied in case of pure $L^\infty$-perturbations, cf.~\cite{RS18,RIJK22}.

Recently, a nonlinear damping estimate was established for the LLE in~\cite{ZUM22} in the case of localized perturbations. The approach in~\cite{ZUM22} is to first derive a damping estimate for the \emph{forward-modulated perturbation}
\begin{align} \label{e:defforpert}
 \mathring{v}(x,t) := \psi\left(x,t\right) - \phi\left(x + \gamma(x,t) + \frac{1}{N} \sigma(t)\right),
\end{align}
and then exploit its equivalence to the inverse-modulated equation, modulo absorbable errors. Here, the modulation functions $\gamma$ and $\sigma$ are precisely those chosen from the inverse-modulated perturbation in~\eqref{e:definvmod}.
The advantage of using a nonlinear damping estimate over the approach in~\cite{HJPR} is that it requires less regularity on the initial data, as can be seen by comparing~\cite[Theorem~6.2]{ZUM22} with~\cite[Theorem~1.3]{HJPR}, see also Remark~\ref{R:Choice}. In addition, as pointed out in~\cite{ZUM22}, it allows for sharp bounds in case of a nonlocalized initial phase {modulation, and has  the possibility (to be checked in individual cases) of extension to quasilinear equations. We refer to~\cite{ZUM22} for further discussion and comparison with the above method from~\cite{HJPR}.}

Thus, motivated by the possibility to allow for less regular initial data, we choose to control regularity in this work by transferring the nonlinear damping estimate in~\cite{ZUM22} to the case of subharmonic perturbations. 

The rest of this section is structured as follows. First, we derive the (quasilinear) equation for the inverse-modulated perturbation $v$, and obtain $N$-uniform estimates on the nonlinearities. Next, we show that the critical terms in the Duhamel formula of the inverse-modulated perturbation can be compensated for by making a judicious choice for the phase modulation functions $\sigma(t)$ and $\gamma(x,t)$. Subsequently, we establish local well-posedness of the integral system consisting of $v$, $\sigma$ and~$\gamma$. Finally, in an effort to control regularity in this system, we consider the forward-modulated perturbation and derive a suitable $N$-uniform nonlinear damping estimate. {The result in Theorem~\ref{T:main} is proved in Section~\ref{S:proof} with $\gamma_{\mathrm{nl}} = \gamma +\sigma/N$ and a suitably chosen constant $\sigma_{\mathrm{nl}}$.}

\begin{remark}\label{R:Choice}
  We note that it is possible to adapt the method of~\cite{RS18, HJPR} to the current setting in order to regain regularity and obtain a nonlinear subharmonic stability result that is uniform in $N$.  This adaptation is complicated, however, by the fact that the temporally-modulated perturbation, $\breve{v}$, and the inverse-modulated perturbation, $v$, want to naturally select different temporal modulations, $\sigma(t)$, at least under the approach of~\cite{RS18, HJPR}. Despite this complication, we were able to establish a result\footnote{We do not report the full result or details here.} similar to Theorem~\ref{T:main}, albeit with the requirement that {$v_0\in H^6_N$, instead of $v_0\in H^2_N$}, where the higher regularity is necessary to guarantee optimal decay results in the absence of a nonlinear damping estimate.
\end{remark}

\subsection{The Inverse-Modulated Perturbation} \label{sec:invmodpert}

Applying the differential operator $\partial_t - \mathcal A[\phi]$ to {the formula~\eqref{e:definvmod} for the inverse-modulated perturbation $v$ while using that $\psi(t)$ and $\phi$ solve the Lugiato-Lefever equation~\eqref{e:LLE}, we obtain} the quasilinear equation
\begin{equation} \label{e:eqinvmod}
\left(\partial_t-\mathcal{A}[\phi]\right)\left(v + \gamma\phi' + \frac{1}{N}\sigma\phi'\right) = \mathcal{N}(v,\gamma,\partial_t \gamma,\partial_t \sigma) + (\d_t - \mathcal{A}[\phi])(\gamma_x v),
\end{equation}
where $\mathcal{A}[\phi]$ is the linear operator defined by~\eqref{e:Aphi} and the nonlinearity is given by
\begin{align*}
\mathcal{N}(v,\gamma,\gamma_t,\sigma_t) = \mathcal{Q}(v,\gamma) + \d_x \mathcal{R}(v,\gamma,\gamma_t,\sigma_t) + \d_x^2 \mathcal{P}(v,\gamma), 
\end{align*}
where
\[
\mathcal{Q}(v,\gamma) =
(1-\gamma_x)\mathcal{J}\left[\left(\begin{array}{cc} 3v_r^2+v_i^2 & 2v_r v_i\\ 2v_r v_i & v_r^2+3v_i^2\end{array}\right)\phi+|v|^2v\right],
\]
is (at least) quadratic in $v$ and where
\begin{align*}
\mathcal{R}(v,\gamma,\gamma_t,\sigma_t) &= -\gamma_t v -\frac{1}{N}\sigma_t v + \beta\mathcal{J}\left[\frac{\gamma_{xx}}{\left(1-\gamma_x\right)^2} v - \frac{\gamma_x^2}{1-\gamma_x}\phi'\right] ,\\
\mathcal{P}(v,\gamma) &= -\beta\mathcal{J}\left[\gamma_x + \frac{\gamma_x}{1-\gamma_x}\right] v,
\end{align*}
contain all terms which are linear in $v$.

Using the $N$-uniform embedding $H^1_N\hookrightarrow L^\infty(\R)$,
the following estimate on the nonlinearity is straightforward to verify.

\begin{lemma} \label{lem:mod_nonlL2}
Fix an $N$-independent constant $K > 0$. There exists an $N$-independent constant $C > 0$ such that the inequality
\begin{align*}
   \left\|\mathcal{N}(v,\gamma,\gamma_t,\sigma_t)\right\|_{L_N^1\cap L_N^2} &\leq C\left( \left\|v\right\|_{L_N^2} \left\|v\right\|_{H_N^1} + \left\|(\gamma_x,\gamma_t)\right\|_{H_N^2 \times H_N^1} \left(\left\|v\right\|_{H_N^2} + \left\|\gamma_x\right\|_{L_N^2}\right) + |\sigma_t|\|v\|_{H_N^1}\right),
\end{align*}
holds for all $v \in H_N^2$, $(\gamma,\gamma_t) \in H_N^3 \times H^1_N$ and $(\sigma,\sigma_t) \in \R \times \R$ satisfying $\|v\|_{L^\infty}, \|\gamma_{xx}\|_{L^{\infty}} \leq K$ and $\|\gamma_x\|_{L^\infty} \leq \frac{1}{2}$.
\end{lemma}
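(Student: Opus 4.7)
The plan is to expand $\mathcal{N} = \mathcal{Q} + \partial_x\mathcal{R} + \partial_x^2\mathcal{P}$ summand by summand via the product and quotient rules, and then to bound each resulting term in $L^1_N$ by Cauchy--Schwarz ($\|fg\|_{L^1_N}\le\|f\|_{L^2_N}\|g\|_{L^2_N}$) and in $L^2_N$ by H\"older ($\|fg\|_{L^2_N}\le\|f\|_{L^\infty}\|g\|_{L^2_N}$), choosing in each case which factor to place in $L^\infty$ so as to produce the bilinear structure required on the right-hand side. All the rational factors $(1-\gamma_x)^{-k}$ are uniformly bounded by a fixed constant via $\|\gamma_x\|_{L^\infty}\le 1/2$ and the geometric series, the quantities $\|\phi^{(j)}\|_{L^\infty}$ for $0\le j\le 2$ are $N$-independent by $T$-periodicity of $\phi$, and passage from $L^\infty$ to Sobolev norms is effected by the $N$-uniform embedding $H^1_N\hookrightarrow L^\infty$ recorded in the notation section.

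For the term $\mathcal{Q}$, the factor $1-\gamma_x$ is bounded in $L^\infty$ and the remaining bracket is at least quadratic in $v$ with coefficients involving $\phi$ or a further factor of $v$; using $\|v\|_{L^\infty}\le K$ one readily obtains $\|\mathcal{Q}\|_{L^1_N\cap L^2_N}\lesssim \|v\|_{L^2_N}\|v\|_{H^1_N}$. Differentiating $\mathcal{R}$ via the identities
\[
\partial_x\frac{\gamma_{xx}}{(1-\gamma_x)^2} = \frac{\gamma_{xxx}}{(1-\gamma_x)^2} + \frac{2\gamma_{xx}^2}{(1-\gamma_x)^3},\qquad \partial_x\frac{\gamma_x^2}{1-\gamma_x} = \frac{2\gamma_x\gamma_{xx}}{1-\gamma_x} + \frac{\gamma_x^2\gamma_{xx}}{(1-\gamma_x)^2},
\]
produces summands of the schematic forms $\gamma_{tx}v$, $\gamma_t v_x$, $N^{-1}\sigma_t v_x$, $\gamma_{xxx}v$, $\gamma_{xx}^2 v$, $\gamma_{xx}v_x$, $\gamma_x\gamma_{xx}\phi'$, $\gamma_x^2\gamma_{xx}\phi'$, and $\gamma_x^2\phi''$; differentiating $\mathcal{P}$ twice produces further summands of types $\gamma_{xxx}v$, $\gamma_{xx}^2 v$, $\gamma_{xx}v_x$, and $\gamma_x v_{xx}$. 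For each summand the correct choice of $L^\infty$-factor, dictated either by the hypotheses $\|v\|_{L^\infty},\|\gamma_{xx}\|_{L^\infty}\le K$ or by the embedding, gives a bound of the desired product form; representative samples are $\|\gamma_{xx}^2 v\|_{L^2_N}\le K\|\gamma_{xx}\|_{L^2_N}\|v\|_{L^\infty}\lesssim \|\gamma_x\|_{H^2_N}\|v\|_{H^2_N}$, $\|\gamma_x\gamma_{xx}\phi'\|_{L^2_N}\le \|\phi'\|_{L^\infty}\|\gamma_{xx}\|_{L^\infty}\|\gamma_x\|_{L^2_N}\lesssim \|\gamma_x\|_{H^2_N}\|\gamma_x\|_{L^2_N}$, and $\|\gamma_x^2\phi'\|_{L^1_N}\le \|\phi'\|_{L^\infty}\|\gamma_x\|_{L^2_N}^2 \le \|\phi'\|_{L^\infty}\|\gamma_x\|_{H^2_N}\|\gamma_x\|_{L^2_N}$.

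The main subtlety, and the only non-routine ingredient I foresee, is the contribution of $N^{-1}\sigma_t v_x$ to $\partial_x\mathcal{R}$ in the $L^1_N$-norm: the embedding $\|v_x\|_{L^1_N}\le\sqrt{NT}\|v_x\|_{L^2_N}$ is not $N$-uniform, but the $1/N$ factor built into the modulation ansatz cancels the $\sqrt{N}$ loss, since
\[
\tfrac1N|\sigma_t|\,\|v_x\|_{L^1_N}\le \tfrac1N|\sigma_t|\sqrt{NT}\,\|v_x\|_{L^2_N} = |\sigma_t|\sqrt{T/N}\,\|v\|_{H^1_N}\le \sqrt{T}\,|\sigma_t|\,\|v\|_{H^1_N},
\]
while the corresponding $L^2_N$-bound is trivially $\le|\sigma_t|\|v\|_{H^1_N}$. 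Summing all contributions and absorbing the $N$-independent constants ($K$, $T$, $|\beta|$, and the supremum norms of $\phi,\phi',\phi''$) into a single $C>0$ then yields the claimed inequality.
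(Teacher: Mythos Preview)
Your proposal is correct and follows the same route the paper indicates: the paper's entire proof is the single sentence ``using the $N$-uniform embedding $H^1_N\hookrightarrow L^\infty(\R)$, the following estimate on the nonlinearity is straightforward to verify,'' and you have faithfully carried out that verification. In particular, your handling of the term $N^{-1}\sigma_t v_x$ in $L^1_N$, where the built-in $1/N$ compensates for the $\sqrt{NT}$ loss from $L^2_N\hookrightarrow L^1_N$, is exactly the point one needs to check and is the only place the $1/N$ in the ansatz~\eqref{e:definvmod} is used in this lemma. One cosmetic remark: among your ``representative samples'' the term $\gamma_x^2\phi'$ does not itself occur in $\partial_x\mathcal{R}$ (only its $x$-derivative does, producing $\gamma_x^2\phi''$ and $\gamma_x\gamma_{xx}\phi'$-type terms), but the bound you wrote applies verbatim with $\phi''$ in place of $\phi'$, so the argument is unaffected.
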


Next, we introduce the modulation functions $\sigma$ and $\gamma$.
 The decomposition~\eqref{e:semidecomp} of the semigroup $\re^{\mathcal{A}[\phi]t}$ in which the first two terms, with lower decay, vanish at $t=0$ allows us to consider modulation functions which vanish identically at $t = 0$, i.e. such that  $\sigma(0)=0$ and $\gamma(\cdot,0)=0$. Then, the Duhamel formulation associated with~\eqref{e:eqinvmod} reads
\begin{equation}\label{e:duhamel1}
v(t) + \frac{1}{N}\sigma(t)\phi' + \gamma(t)\phi' = \re^{\mathcal{A}[\phi]t}v_0 + \int_0^t \re^{\mathcal{A}[\phi](t-s)}\mathcal{N}(v,\gamma,\partial_s \gamma,\partial_s \sigma)(s) \de s + \gamma_x(t)v(t).
\end{equation}
Together with the semigroup decomposition~\eqref{e:semidecomp}, and the formula~\eqref{e:proj} for the spectral projection $\mathcal{P}_{0,N}$, this recommends the (implicit) choices
\begin{align}
    \sigma(t) &= \chi(t)\LA\widetilde{\Phi}_0,v_0\RA_{L^2_N} + \int_0^t \chi(t-s)\LA\widetilde{\Phi}_0,\mathcal{N}(v,\gamma,\partial_s \gamma,\partial_s \sigma)(s)\RA_{L^2_N} \de s, \label{e:intsigma}\\
    \gamma(t) &= \widetilde s_{p,N}(t)v_0 + \int_0^t \widetilde s_{p,N}(t-s)\mathcal{N}(v,\gamma,\partial_s \gamma,\partial_s \sigma)(s)\de s, \label{e:intgamma}
\end{align}
so that $\sigma(t)$ accounts for the non-decaying $\chi(t)\mathcal{P}_{0,N}$-terms and $\gamma(x,t)$ for the slowly decaying $\widetilde s_{p,N}(t)$-terms on the right-hand side of~\eqref{e:duhamel1}. 
{Subtracting~\eqref{e:intsigma} and~\eqref{e:intgamma}
from~\eqref{e:duhamel1} yields the equation for the inverse-modulated perturbation}
\begin{equation}
    v(t) = \widetilde{S}_N(t)v_0 + \int_0^t \widetilde{S}_N(t-s)\mathcal{N}(v,\gamma,\partial_s \gamma,\partial_s \sigma)(s)\de s + \gamma_x(t)v(t). \label{e:intv2}
\end{equation}

Recalling the definition~\eqref{e:definvmod} of the inverse-modulated perturbation, one observes that~\eqref{e:intsigma}-\eqref{e:intgamma} forms a closed integral system in terms of $\sigma$ and $\gamma$. A standard contraction mapping argument then yields the  local existence and uniqueness result for the modulation functions.

\begin{proposition}[Local Theory for the Phase Modulations] \label{p:gamma}
Taking $\psi$ and $T_{\max}$ as in Proposition~\ref{p:local_unmod}, there exist a maximal time $\tau_{\max} \in (0,T_{\max}]$ and an $N$-independent constant $r_0 > 0$ such that the integral system~\eqref{e:intsigma}-\eqref{e:intgamma}, with $v$ given by~\eqref{e:definvmod}, has a unique solution
\begin{align*} (\sigma,\gamma) \in C\big([0,\tau_{\max}),\R \times H^4_N\big) \cap C^1\big([0,\tau_{\max}),\R \times H^2_N\big), \end{align*}
with $(\sigma,\gamma)(0)=0$ satisfying
\begin{align*}
\left\|\left(\sigma(t),\partial_t \sigma(t),\gamma(t),\partial_t \gamma(t)\right)\right\|_{\R \times \R \times H^4_N \times H^2_N} < r_0, \qquad \|\gamma_x(t)\|_{L^\infty} \leq \frac12, 
\end{align*}
for all $t \in [0,\tau_{\max})$. Finally, if $\tau_{\max} < T_{\max}$, then
\begin{align} \limsup_{t \uparrow \tau_{\max}} \left\|\left(\sigma(t),\partial_t \sigma(t),\gamma(t),\partial_t \gamma(t)\right)\right\|_{\R \times \R \times H^4_N \times H^2_N} \geq r_0. \label{e:gammablowup}
\end{align}
\end{proposition}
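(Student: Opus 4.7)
The plan is to apply Banach's fixed-point theorem to the system~\eqref{e:intsigma}--\eqref{e:intgamma} viewed as an equation for $(\sigma,\gamma)$, with $\psi$ taken from Proposition~\ref{p:local_unmod} and $v$ defined by~\eqref{e:definvmod}. A convenient preliminary remark is that the cutoff $\chi$ vanishes on $[0,1]$ and $\widetilde s_{p,N}(t) = \chi(t) s_{p,N}(t)$, so the right-hand sides of~\eqref{e:intsigma}--\eqref{e:intgamma} identically vanish on $[0,1]$; hence the initial condition $(\sigma,\gamma)(0) = 0$ is enforced automatically, and differentiating under the integral sign later produces no boundary contribution since $\chi(0) = \widetilde s_{p,N}(0) = 0$.

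For $\tau \in (0, T_{\max}]$ and $r>0$, I would work in the closed ball
\[
B_{\tau,r} := \Big\{(\sigma,\gamma) \in C\big([0,\tau], \R \times H^4_N\big) \cap C^1\big([0,\tau], \R \times H^2_N\big) :\ (\sigma,\gamma)(0) = 0,\ \|(\sigma,\gamma)\|_{X_\tau} \leq r\Big\},
\]
equipped with the norm $\|(\sigma,\gamma)\|_{X_\tau} := \sup_{t\in[0,\tau]} \|(\sigma(t),\partial_t\sigma(t),\gamma(t),\partial_t\gamma(t))\|_{\R\times\R\times H^4_N\times H^2_N}$, and define $\Psi(\sigma,\gamma) = (\sigma_*,\gamma_*)$ by the right-hand sides of~\eqref{e:intsigma}--\eqref{e:intgamma}. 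For $r$ small enough the $N$-uniform embedding $H^3_N \hookrightarrow L^\infty$ forces $\|\gamma_x(t)\|_{L^\infty} \leq 1/2$, so that $x \mapsto x - \gamma(x,t) - \sigma(t)/N$ is a global diffeomorphism of $\R$ and standard Sobolev composition estimates make $v \in C([0,\tau], H^2_N)$ well-defined with $\|v(t)\|_{H^2_N}$ controlled by $\|\psi(t) - \phi\|_{H^2_N}$, $\|\gamma(t)\|_{H^4_N}$ and $|\sigma(t)|$. Combining the nonlinearity bound in Lemma~\ref{lem:mod_nonlL2} with the smoothing estimates~\eqref{L:mod_bd1} (applied for $l \leq 4$, $j \leq 1$, $k = 0$) and using $\widetilde\Phi_0 \in L^\infty$ to bound the inner product in~\eqref{e:intsigma}, one then obtains $N$-uniform bounds on each of $|\sigma_*|$, $|\partial_t \sigma_*|$, $\|\gamma_*\|_{H^4_N}$ and $\|\partial_t \gamma_*\|_{H^2_N}$; the time convolutions against weights $(1+t-s)^{-1/4-(l+j)/2}$ are trivially bounded on the finite interval $[0,\tau]$. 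An analogous computation on differences, exploiting the multilinear Lipschitz structure of both the composition $v$ and of $\mathcal{N}$ in its four arguments, shows that $\Psi$ is a contraction on $B_{\tau,r_0}$ for $r_0 > 0$ sufficiently small (and $N$-independent) and then $\tau > 0$ sufficiently small.

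Continuation then yields a maximal time $\tau_{\max} \in (0, T_{\max}]$ together with the blow-up alternative~\eqref{e:gammablowup}: as long as the modulation norm stays strictly below $r_0$ at some $t_0 < T_{\max}$, one reparametrizes the integral equations starting from $t_0$ and reapplies the same contraction argument with the same constant $r_0$, producing a strict extension. The main technical obstacle will be the Sobolev estimate on the composition $v(x,t) = \psi(x - \gamma(x,t) - \sigma(t)/N, t) - \phi(x)$: since $\psi(\cdot,t)$ has only $H^2_N$ regularity, each spatial derivative of $v$ produces a derivative of $\psi$ evaluated at the modulated argument multiplied by a derivative of $\gamma$ of one higher order, which is exactly why $\gamma \in H^4_N$ (and hence $\gamma_x \in H^3_N \hookrightarrow L^\infty$) is required to close the estimates. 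Verifying these composition and associated Lipschitz bounds, and ensuring that all constants are $N$-independent via the uniform embedding $H^1_N \hookrightarrow L^\infty$, is the most delicate step.
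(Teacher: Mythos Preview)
Your overall strategy is right, but there is a genuine regularity gap in the contraction step. You propose to bound $\mathcal{N}(v,\gamma,\partial_t\gamma,\partial_t\sigma)$ directly in $L^2_N$ via Lemma~\ref{lem:mod_nonlL2} and then apply~\eqref{L:mod_bd1} with $k=0$. But $\mathcal{N}$ contains the term $\partial_x^2\mathcal{P}(v,\gamma)$, so this route requires $v$ to lie in $H^2_N$ \emph{and} to depend Lipschitz-continuously on $(\gamma,\sigma)$ as an $H^2_N$-valued map. Writing $v_{xx}=\psi''(A_t)(A_t')^2+\psi'(A_t)A_t''-\phi''$, the difference $v_{1,xx}-v_{2,xx}$ contains the term $\big(\psi''(A_{t,1})-\psi''(A_{t,2})\big)(A_{t,1}')^2$, and controlling its $L^2_N$-norm by $\|(\gamma_1,\sigma_1)-(\gamma_2,\sigma_2)\|$ via the mean value theorem would require a bound on $\psi'''$, which is not available from Proposition~\ref{p:local_unmod}. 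The same obstruction blocks continuity of $t\mapsto v(t)$ in $H^2_N$; the paper in fact notes explicitly (footnote to Proposition~\ref{C:local_v}) that such continuity is unclear.

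The paper circumvents this by exploiting the full strength of~\eqref{L:mod_bd1}, namely the parameter $k$: one writes $\mathcal{N}=\mathcal{Q}+\partial_x\mathcal{R}+\partial_x^2\mathcal{P}$ and lets the smoothing operator $\widetilde s_{p,N}(t-s)\partial_x^k$ (and, for $\sigma$, integration by parts onto $\widetilde\Phi_0$) absorb the derivatives. Then only $\mathcal{Q},\mathcal{R},\mathcal{P}$ themselves need to be bounded and Lipschitz as $L^2_N$-valued maps, which in turn only requires $V(\gamma,\sigma,t):=\psi(\cdot-\gamma-\sigma/N,t)-\phi$ to be continuous in $t$ and Lipschitz in $(\gamma,\sigma)$ with values in $L^2_N$; this follows from the mean value theorem using merely $\psi(t)\in W^{1,\infty}$. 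Your scheme becomes correct once you make this single change: use $k=0,1,2$ on $\mathcal{Q},\mathcal{R},\mathcal{P}$ respectively rather than $k=0$ on $\mathcal{N}$.
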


We prove this proposition in Appendix~\ref{app:local}. Then, for the inverse-modulated perturbation $v$ we obtain the following local existence result.

\begin{proposition}[Local Theory for The Inverse-Modulated Perturbation] \label{C:local_v}
Taking $\psi$ and $T_{\max}$ as in Proposition~\ref{p:local_unmod} and $\sigma,\gamma$ and $\tau_{\max}$ as in Proposition~\ref{p:gamma}, the inverse-modulated perturbation $v$, defined by~\eqref{e:definvmod}, satisfies $v \in C\big([0,\tau_{\max}),L^2_N\big)$. Moreover, for any $t \in [0,\tau_{\max})$ it holds $v(t) \in H^2_N$.\footnote{We note that it is not clear that the inverse-modulated perturbation $v \colon [0,\tau_{\max}) \to H^2_N$ is continuous. A standard approach to prove continuity of $v$ would be to apply the mean value theorem to the perturbed solution $\psi(t)$ and its derivatives. This would however require boundedness of the third derivative of $\psi(t)$, which does not follow from Proposition~\ref{p:local_unmod}.} 
\end{proposition}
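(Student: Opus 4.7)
The plan is to establish the pointwise regularity $v(t) \in H^2_N$ via the chain rule and change of variables, and then separately obtain continuity $v \in C([0,\tau_{\max}),L^2_N)$ by splitting the time increment into a piece where only $\psi$ varies and a piece where only the phase modulation varies.

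For the pointwise regularity, I would fix $t \in [0,\tau_{\max})$ and introduce $\Phi_t(x) := x - \gamma(x,t) - \sigma(t)/N$. Since $\gamma(t) \in H^4_N \hookrightarrow C^3$ and $\|\gamma_x(t)\|_{L^\infty} \leq 1/2$ by Proposition~\ref{p:gamma}, we have $\Phi_t'(x) = 1-\gamma_x(x,t) \geq 1/2$, so $\Phi_t$ is a $C^3$-diffeomorphism of $\R$ commuting with translation by $NT$. This makes $v(\cdot,t) = \psi(\Phi_t(\cdot),t) - \phi$ an $NT$-periodic function. The chain rule gives
\[ v_{xx}(\cdot,t) = (1-\gamma_x)^2\,\psi_{xx}(\Phi_t(\cdot),t) - \gamma_{xx}\,\psi_x(\Phi_t(\cdot),t) - \phi_{xx}, \]
and a change of variables $y = \Phi_t(x)$, combined with the lower bound $\Phi_t' \geq 1/2$ and the $NT$-periodicity, yields $\|\psi_{xx}(\Phi_t(\cdot),t)\|_{L^2_N}^2 \leq 2\|\psi_{xx}(t)\|_{L^2_N}^2$, and analogously for $\psi_x(\Phi_t(\cdot),t)$. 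Together with the uniform embeddings $\psi_x(t) \in H^1_N \hookrightarrow L^\infty$, $\gamma_{xx}(t) \in H^2_N \hookrightarrow L^\infty$, and the smoothness of $\phi$, this delivers $v(t) \in H^2_N$.

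For the $L^2_N$-continuity, I would use for small $h$ the additive splitting
\[ v(t+h) - v(t) = \bigl[\psi(\Phi_{t+h}(\cdot),t+h) - \psi(\Phi_{t+h}(\cdot),t)\bigr] + \bigl[\psi(\Phi_{t+h}(\cdot),t) - \psi(\Phi_t(\cdot),t)\bigr]. \]
The first bracket absorbs the time-dependence of $\psi$: since $\Phi_{t+h}' \geq 1/2$ uniformly in $h$, a change of variables bounds its $L^2_N$-norm by $\sqrt{2}\,\|\psi(t+h)-\psi(t)\|_{L^2_N}$, which vanishes as $h\to 0$ by Proposition~\ref{p:local_unmod}. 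The second bracket absorbs the time-dependence of the modulations: since $\psi(\cdot,t) \in H^2_N \hookrightarrow W^{1,\infty}$, the mean value theorem gives
\[ \bigl\|\psi(\Phi_{t+h}(\cdot),t) - \psi(\Phi_t(\cdot),t)\bigr\|_{L^2_N} \leq \|\psi_x(t)\|_{L^\infty}\,\bigl\|\gamma(t+h) - \gamma(t) + \tfrac{1}{N}(\sigma(t+h)-\sigma(t))\bigr\|_{L^2_N}, \]
which vanishes by continuity of $\sigma \in C([0,\tau_{\max}),\R)$ and $\gamma \in C([0,\tau_{\max}),H^4_N) \hookrightarrow C([0,\tau_{\max}),L^2_N)$ from Proposition~\ref{p:gamma}.

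The main technical obstruction, flagged in the footnote to the statement, is that attempting to prove $H^2_N$-continuity of $v$ by a single mean-value argument in $t$ would require an $H^3_N$-bound on $\psi$ that is not available from the local theory. The two-term splitting above sidesteps this precisely because it pays for the differentiation of $\psi$ against the modulations at the $L^\infty$ level (using only $\psi(t) \in H^2_N \hookrightarrow W^{1,\infty}$) while handling the time-dependence of $\psi$ itself purely through its $L^2_N$-continuity. No regularity beyond what Propositions~\ref{p:local_unmod} and~\ref{p:gamma} already provide is needed, and all embedding constants used are uniform in $N$ by the discussion in the notation subsection.
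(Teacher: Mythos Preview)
Your proof is correct and follows essentially the same approach as the paper: chain rule plus change of variables using $\Phi_t' \geq 1/2$ for the $H^2_N$ regularity, and a two-term splitting (time variation of $\psi$ versus time variation of the modulation, controlled by the mean value theorem) for the $L^2_N$-continuity. The only difference is organizational: the paper packages the continuity estimates into a separate lemma establishing that the map $V(\gamma,\sigma,t) = \psi(\cdot - \gamma - \sigma/N, t) - \phi$ is continuous in $t$ and locally Lipschitz in $(\gamma,\sigma)$, then writes $v = V\circ F$ with $F(t)=(\gamma(t),\sigma(t),t)$, whereas you carry out the same estimates directly.
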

\begin{proof}
First, notice that $v = V \circ F$ where $V$ is the continous mapping defined in Lemma~\ref{L:loc_gamma} and $F \colon [0,\tau_{\max}) \to L^2_N \times \R \times [0,T_{\max})$ is defined by $F(t) = (\gamma(t),\sigma(t),t)$. 
By Proposition~\ref{p:gamma} the map $F$ is continuous which together with the continuity of $V$ implies that $v \in C\big([0,\tau_{\max}),L^2_N\big)$. 

Next, let $t\in [0,\tau_{\max})$. By Proposition~\ref{p:gamma} it holds $\|\gamma_x(t)\|_{L^\infty} \leq \frac12$. So, the map $A_t \colon \R \to \R$ given by
$$ A_t(x) = x - \gamma(x,t) - \frac{1}{N} \sigma(t),$$
is invertible. Moreover, the $NT$-periodicity of $\gamma(\cdot,t)$ implies $A_t(NT) - A_t(0) = NT$. Hence, using Young's inequality, the $N$-uniform embedding $H^1_N \hookrightarrow L^\infty(\R)$ and the substitution $y = A_t(x)$, we establish a constant $C > 0$ such that for any $f \in H^2_N$ it holds
\begin{align}
\label{e:subst2}
\begin{split}
\left\|f\left(A_t(\cdot)\right)\right\|_{H^2_N}^2 
&\leq \int_{0}^{NT} \left|f\left(A_t(y)\right)\right|^2 \de y
+ 2\int_{0}^{NT} \left|f''\left(A_t(y)\right)\right|^2 A_t'(y)^4 \de y \\ 
&\qquad\quad + \,\int_{0}^{NT} \left|f'\left(A_t(y)\right)\right|^2 \left(A_t'(y)^2 + 2A_t''(y)^2\right) \de y \\
&= \int_{A_t(0)}^{A_t(NT)} \frac{\left|f\left(x\right)\right|^2}{1-\gamma_x\left(A_t^{-1}(x),t\right)} \de x\\ 
&\qquad\quad
+ 2\int_{A_t(0)}^{A_t(NT)} \left|f''\left(x\right)\right|^2\left(1-\gamma_x\left(A_t^{-1}(x),t\right)\right)^3 \de x\\ 
&\qquad\quad + \,\int_{A_t(0)}^{A_t(NT)} \left|f'\left(x\right)\right|^2\frac{\left(1-\gamma_x\left(A_t^{-1}(x),t\right)\right)^2 + 2\gamma_{xx}\left(A_t^{-1}(x),t\right)^2}{1-\gamma_x\left(A_t^{-1}(x),t\right)} \de x\\
&\leq C\|f\|_{H^2_N}^2\left(1+\|\gamma_{xx}(t)\|_{H^1_N}^2\right).
\end{split}
\end{align}
Therefore, we find $v(t) = \psi(A_t(\cdot),t) - \phi \in H^2_N$ by Propositions~\ref{p:local_unmod} and~\ref{p:gamma}. 
\end{proof}

We note that the primary success in the above nonlinear decomposition is that the only component of the linearized evolution $\re^{\mathcal{A}[\phi]t}$ that remains in the equation~\eqref{e:intv2} for $v(t)$ is the $\smash{\widetilde{S}_N(t)}$-component, which exhibits temporal decay at the rate $\smash{(1+t)^{-\frac34}}$, which is strictly faster than the (diffusive) decay rate $\smash{(1+t)^{-\frac14}}$ associated with the projected semigroup $\re^{\mathcal{A}[\phi]t}\left(1-\mathcal{P}_{0,N}\right)$.  Further, the nonlinear residual $\mathcal{N}$ depends only on derivatives of $\gamma$ and $\sigma$ which, recalling that $\smash{\widetilde s_{p,N}(0)=0}$ and $\chi(0) = 0$, satisfy
\begin{align} \label{e:intsigma2}
\partial_t^j \sigma(t) &= \partial_t^j \chi(t)\LA\widetilde{\Phi}_0,v_0\RA_{L^2_N} + \int_0^t \partial_t^j \chi(t-s)\LA\widetilde{\Phi}_0,\mathcal{N}(v,\gamma,\partial_s \gamma,\partial_s \sigma)(s)\RA_{L^2_N} \de s \\
\label{e:intgamma2}
\partial_x^\ell\partial_t^j{\gamma(x,t)} &= \partial_x^\ell\partial_t^j\widetilde s_{p,N}(t)v_0 +\int_0^t\partial_x^\ell\partial_t^j \widetilde s_{p,N}(t-s)\mathcal{N}(v,\gamma,\partial_s\gamma,\partial_s \sigma)(s)\de s
\end{align}
for all $\ell,j\in\NM_0$ and $t \in [0,\tau_{\max})$. We observe that the derivative $\chi'(t)$ vanishes for $t \geq 2$, whereas the temporal decay of $\widetilde s_{p,N}(t)$ improves to $\smash{(1+t)^{-\frac{3}{4}}}$ upon taking derivatives, cf.~Proposition~\ref{P:lin}. This suggests that the linear decay in an iteration scheme consisting of $v(t)$ and \emph{derivatives} of $\sigma(t)$ and $\gamma(t)$ is strong enough to close a nonlinear argument. Yet, the apparent loss of regularity needs to be addressed, which will be the purpose of the remainder of this section.

\subsection{The Forward-Modulated Perturbation}

First, the local existence and uniqueness of the forward-modulated perturbation, {$\vt$} defined by~\eqref{e:defforpert}, readily follows from Propositions~\ref{p:local_unmod} and~\ref{p:gamma}.

\begin{corollary}[Local Theory for The Forward-Modulated Perturbation] \label{C:local_vf}
Taking $\psi$ as in Proposition~\ref{p:local_unmod} and $\sigma,\gamma$ and $\tau_{\max}$ as in Proposition~\ref{p:gamma}, the forward-modulated perturbation $\vt$ defined by~\eqref{e:defforpert} satisfies $\vt \in C\big([0,\tau_{\max}),H^2_N\big) \cap C^1\big([0,\tau_{\max}),L_2^N\big)$.
\end{corollary}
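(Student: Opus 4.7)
The plan is to write the forward-modulated perturbation as $\vt(x,t) = \psi(x,t) - \phi(B_t(x))$ with $B_t(x) := x + \gamma(x,t) + \sigma(t)/N$, and to analyze the composition $\phi \circ B_t$ separately, since Proposition~\ref{p:local_unmod} already supplies the needed regularity of $\psi$. The key point is that, in contrast to the inverse-modulated perturbation, the outer function of the composition is the smooth periodic wave $\phi$ rather than the perturbed solution $\psi$, so no regularity is lost.

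First I would verify that $\phi \circ B_t$ is $NT$-periodic and lies in $H^2_N$ for every $t \in [0,\tau_{\max})$. Periodicity holds because the $NT$-periodicity of $\gamma(\cdot,t)$ gives $B_t(x+NT) = B_t(x) + NT$, and $NT$ is a period of $\phi$. For the $H^2_N$-bound, the chain rule yields
\[
\partial_x(\phi \circ B_t) = \phi'(B_t)(1+\gamma_x), \qquad \partial_x^2(\phi \circ B_t) = \phi''(B_t)(1+\gamma_x)^2 + \phi'(B_t)\gamma_{xx},
\]
and a substitution argument analogous to~\eqref{e:subst2}, using smoothness of $\phi$, the uniform embedding $H^2_N \hookrightarrow L^\infty$, and the bound $\|\gamma_x(t)\|_{L^\infty}\leq 1/2$ from Proposition~\ref{p:gamma}, gives $\|\phi\circ B_t\|_{H^2_N} \leq C(1+\|\gamma(t)\|_{H^2_N})$. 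Continuity $t\mapsto \phi\circ B_t \in H^2_N$ would then follow from the identity
\[
\phi(B_t(x)) - \phi(B_{t_0}(x)) = (B_t(x)-B_{t_0}(x))\int_0^1 \phi'\bigl(B_{t_0}(x)+\tau(B_t(x)-B_{t_0}(x))\bigr)\de\tau,
\]
together with the analogous expressions obtained by differentiating once and twice in $x$, and the continuity of $(\gamma,\sigma)\colon [0,\tau_{\max}) \to H^4_N \times \R$ supplied by Proposition~\ref{p:gamma}.

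For the $C^1$-in-$L^2_N$ claim, I would compute the formal time derivative
\[
\partial_t \vt = \partial_t \psi - \phi'(B_t)\bigl(\partial_t \gamma + \tfrac{1}{N}\partial_t \sigma\bigr).
\]
The first term lies in $C([0,\tau_{\max}),L^2_N)$ by Proposition~\ref{p:local_unmod}. For the second, $\phi'(B_t)$ is uniformly bounded in $L^\infty$ by smoothness of $\phi$, while $\partial_t\gamma \in C([0,\tau_{\max}),H^2_N) \hookrightarrow C([0,\tau_{\max}),L^\infty)$ and $\partial_t\sigma \in C([0,\tau_{\max}),\R)$ by Proposition~\ref{p:gamma}; hence the product is continuous with values in $L^2_N$. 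To justify that the formal computation indeed produces the derivative of $\vt$ in the $L^2_N$-sense, I would check that the Nemytskii map $H^4_N\times\R \ni (w,s)\mapsto \phi(\cdot + w + s/N) \in L^2_N$ is continuously Fréchet differentiable (which follows from the smoothness of $\phi$ and the algebra structure of $H^4_N$) and then invoke the chain rule.

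I do not anticipate any genuine obstacle: the only nontrivial estimate is the $H^2_N$-bound on $\phi\circ B_t$, which is a milder version of the substitution calculation already carried out in the proof of Proposition~\ref{C:local_v} and is in fact simpler here because the outer function $\phi$ is globally smooth and bounded.
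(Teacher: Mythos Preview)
Your proposal is correct and follows essentially the same route as the paper: decompose $\vt = \psi - \phi\circ B_t$, invoke Proposition~\ref{p:local_unmod} for the $\psi$-part, and control the composition $\phi\circ B_t$ via the mean value theorem and the smoothness of $\phi$. The paper's proof is slightly more streamlined---it directly bounds $\|\vt(t)-\vt(s)\|_{H^2_N}$ and $\|\partial_t\vt(t)-\partial_s\vt(s)\|_{L^2_N}$ without the intermediate discussion of Fr\'echet differentiability---but the substance is the same, and your added care in justifying the chain rule for the time derivative is a reasonable refinement.
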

\begin{proof}
Let $t,s \in [0,\tau_{\max})$. With the aid of the mean value theorem and the embedding $H^1_N \hookrightarrow L^\infty(\R)$ we establish a constant $C > 0$ such that
\begin{align*}
\|\vt(t) - \vt(s)\|_{H^2_N} &\leq C\|\phi'\|_{W^{2,\infty}} \left(1 + \|\gamma_x(t)\|_{H^2_N} + \|\gamma_x(s)\|_{H^2_N}\right)^2\left(\|\gamma(t) - \gamma(s)\|_{H^2_N} + |\sigma(t) - \sigma(s)|\right) \\
&\qquad + \, \|\psi(t) - \psi(s)\|_{H^2_N},
\end{align*}
and
\begin{align*}
\|\partial_t \vt(t) - \partial_s \vt(s)\|_{L^2_N} &\leq \|\phi'\|_{L^\infty} \left(\|\partial_t \gamma(t) - \partial_s \gamma(s)\|_{L^2_N} + |\sigma'(t) - \sigma'(s)|\right) + \|\partial_t \psi(t) - \partial_s \psi(s)\|_{L^2_N}\\
& \qquad + \, \|\phi''\|_{L^\infty} \left(\|\partial_t \gamma(t)\|_{H^1_N} +  |\sigma'(t)|\right)\left(\|\gamma(t) - \gamma(s)\|_{L^2_N} + |\sigma(t) - \sigma(s)|\right),
\end{align*}
which yields the proof by invoking Propositions~\ref{p:local_unmod} and~\ref{p:gamma}.
\end{proof}

Applying the operator $\partial_t - \mathcal A[\phi]$ to~\eqref{e:defforpert} while using that $\phi$ and $\psi(t)$ are solutions of~\eqref{e:LLE}, one finds that the forward-modulated perturbation $\vt$ satisfies the equation
\begin{equation} \label{e:eqformod}
\left(\partial_t-\mathcal{A}\left[\mathring{\phi}\right]\right)\vt = \widetilde{\mathcal{N}}\left[\mathring{\phi}\right]\left(\vt\right) + \widetilde{\mathcal R}\left(\gamma,\partial_t \gamma,\partial_t \sigma\right),
\end{equation}
for $t \in [0,\tau_{\max})$, where $\mathring{\phi}$ denotes the modulated periodic wave
\[
\mathring{\phi}(x,t) = \phi\left(x + \gamma(x,t) + \frac{1}{N}\sigma(t)\right),
\]
with $\sigma$ and $\gamma$ chosen as in~\eqref{e:intsigma}-\eqref{e:intgamma},
the linear operator $\mathcal{A}[\phi]$ is defined by~\eqref{e:Aphi}, the nonlinearity $\smash{\widetilde{\mathcal N}}[\phi](\vt)$ is given by~\eqref{e:Nphi}, and the residual $\widetilde{\mathcal R}(\gamma,\gamma_t,\sigma_t)$ is defined by
\begin{align*}
\widetilde{\mathcal R}(\gamma,\gamma_t,\sigma_t) &= - \beta J\left(\phi''\left(\cdot + \gamma(\cdot,t) + \frac{1}{N}\sigma(t)\right)\left(2\gamma_x + \gamma_x^2\right) + \phi'\left(\cdot + \gamma(\cdot,t) + \frac{1}{N}\sigma(t)\right)\gamma_{xx}\right)\\
&\qquad + \, \phi'\left(\cdot + \gamma(\cdot,t) + \frac{1}{N}\sigma(t)\right) \left(\gamma_t + \frac{1}{N}\sigma_t\right).
\end{align*}
One observes that the equation~\eqref{e:eqformod} for the forward-modulated perturbation $\vt$ arises from the equation~\eqref{e:lin} for the unmodulated perturbation by replacing the periodic steady wave $\phi$ by the modulated wave $\mathring{\phi}$ and adding the residual term $\widetilde{\mathcal R}(\gamma,\gamma_t,\sigma_t)$, which does not depend on $\vt$. In particular, equation~\eqref{e:eqformod} is semilinear in $\vt$, which simplifies acquiring a nonlinear damping estimate. In fact, for the case of localized perturbations, a nonlinear damping estimate has been obtained for equation~\eqref{e:lin} in~\cite[Appendix~A]{HJPR} and, using an analogous approach, for equation~\eqref{e:eqformod} in~\cite[Section~5.4]{ZUM22}. The method transfers to the case of subharmonic perturbations in an $N$-uniform way, leading to the following result.  

\begin{proposition}[Nonlinear Damping Estimate on the Forward-Modulated Perturbation]   \label{P:damping1}
Let $\psi$ be as in Proposition~\ref{p:local_unmod} and $\sigma,\gamma$ and $\tau_{\max}$ as in Proposition~\ref{p:gamma}. Then, there exist $N$-independent constants $R_1, C > 0$ such that the forward-modulated perturbation $\vt(t)$ given by~\eqref{e:defforpert} obeys the estimate
\begin{align} \label{e:dampingineq}
\begin{split}
\|\vt(t)\|_{H^2_N}^2 &\leq C\re^{-t} \|v_0\|_{H^2_N}^2 + C\|\vt(t)\|_{L^2_N}^2\\ 
&\qquad + \, C\int_0^t \re^{-(t-s)} \left(\|\vt(s)\|_{L^2_N}^2 + \|\gamma_x(s)\|_{H^3_N}^2 + \|\partial_s \gamma(s)\|_{H^2_N}^2 + |\partial_s \sigma(s)|^2\right) \de s,
\end{split}
\end{align}
for any $t \in [0,\tau_{\max})$ with
\begin{align}\label{e:upbound}
\sup_{0 \leq s \leq t} \left(\|\vt(s)\|_{H^2_N} + \|\gamma_x(s)\|_{H^3_N} + \|\partial_s \gamma(s)\|_{H^2_N} + |\partial_s \sigma(s)|\right) \leq R_1.
\end{align}
\end{proposition}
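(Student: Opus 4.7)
My plan is to transfer the nonlinear damping argument of~\cite[Section~5.4]{ZUM22}, originally derived for localized perturbations on $\R$, to the subharmonic setting on $L^2_N$, ensuring that all constants are $N$-independent via the $N$-uniform Sobolev embedding $H^1_N\hookrightarrow L^\infty(\R)$ and the a priori bounds~\eqref{e:upbound}. The key structural observations are that equation~\eqref{e:eqformod} for $\vt$ is semilinear with the same principal form as~\eqref{e:lin} (only with the periodic wave $\phi$ replaced by the modulated wave $\mathring{\phi}$), that $\mathcal{A}[\mathring{\phi}] = -I + \mathcal{J}\mathcal{L}[\mathring{\phi}]$ furnishes a built-in $L^2_N$-damping through the $-I$ term, and that the top-order dispersive part $-\beta\mathcal{J}\partial_x^2$ of $\mathcal{J}\mathcal{L}[\mathring{\phi}]$ is skew-adjoint on $L^2_N$ (since $\mathcal{J}$ is a constant skew-symmetric matrix and $\partial_x^2$ is self-adjoint on the periodic interval $[0,NT]$). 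Note that the target bound~\eqref{e:dampingineq} only asks for damping of the higher-order derivatives $\partial_x\vt$ and $\partial_x^2\vt$, since $\|\vt(t)\|_{L^2_N}^2$ appears as a source term on its right-hand side.

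The execution is a standard energy estimate. For $k = 1,2$, apply $\partial_x^k$ to~\eqref{e:eqformod}, pair with $\partial_x^k\vt$ in $L^2_N$, and integrate by parts. The $-I$ contribution produces the damping $-\|\partial_x^k\vt\|_{L^2_N}^2$; the top-order piece of $\mathcal{J}\mathcal{L}[\mathring{\phi}]$ vanishes by skew-adjointness; and the remaining lower-order multiplicative and commutator terms have $L^\infty$-coefficients that are smooth functions of $\phi,\phi',\phi''$ composed with the modulated argument and polynomials in $\gamma_x,\gamma_{xx},\gamma_{xxx}$, which are controlled $N$-uniformly via~\eqref{e:upbound}; Cauchy--Schwarz and Young's inequality with small parameter $\eps$ then bound their contribution by $\eps\|\partial_x^k\vt\|_{L^2_N}^2 + C_\eps\bigl(\|\vt\|_{H^{k-1}_N}^2 + \|\gamma_x\|_{H^3_N}^2\bigr)$. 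The semilinear nonlinearity~\eqref{e:Nphi} is quadratic and cubic in $\vt$, so under the smallness condition~\eqref{e:upbound} it contributes at most $CR_1\|\vt\|_{H^2_N}^2$, which is absorbable into the damping for $R_1$ sufficiently small. Finally, the residual $\widetilde{\mathcal R}(\gamma,\partial_t\gamma,\partial_t\sigma)$ is linear in $(\gamma_x,\gamma_{xx},\partial_t\gamma,\partial_t\sigma)$ with bounded coefficients and yields at most $\eps\|\partial_x^k\vt\|_{L^2_N}^2 + C_\eps\bigl(\|\gamma_x\|_{H^3_N}^2 + \|\partial_t\gamma\|_{H^2_N}^2 + |\partial_t\sigma|^2\bigr)$.

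Summing over $k = 1,2$ with appropriate positive weights, choosing $\eps$ and $R_1$ small enough, and applying the Gagliardo--Nirenberg interpolation $\|\partial_x\vt\|_{L^2_N}^2 \leq \eta\|\partial_x^2\vt\|_{L^2_N}^2 + C_\eta\|\vt\|_{L^2_N}^2$ to move the intermediate-order $\|\vt\|_{H^{k-1}_N}^2$ pieces into either the leading damping term or the $\|\vt\|_{L^2_N}^2$-source, yields a differential inequality of the form
\begin{equation*}
\frac{d}{dt}\mathcal{E}(t) + \mathcal{E}(t) \leq C\bigl(\|\vt(t)\|_{L^2_N}^2 + \|\gamma_x(t)\|_{H^3_N}^2 + \|\partial_t\gamma(t)\|_{H^2_N}^2 + |\partial_t\sigma(t)|^2\bigr),
\end{equation*}
where $\mathcal{E}(t) \sim \|\partial_x\vt(t)\|_{L^2_N}^2 + \|\partial_x^2\vt(t)\|_{L^2_N}^2$. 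Integrating via Gr\"onwall with exponential weight $e^{-(t-s)}$ and adding $\|\vt(t)\|_{L^2_N}^2$ (allowed as a source) to both sides then produces~\eqref{e:dampingineq}.

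The main obstacle is verifying that the damping furnished by $-I$ genuinely dominates all the lower-order contributions after the various absorptions, specifically the bilinear form generated by the zero-order multiplicative part $\mathcal{J}V$ of $\mathcal{J}\mathcal{L}[\mathring{\phi}]$: an elementary computation shows that its symmetric part has eigenvalues $\pm|\mathring{\phi}|^2$, with no smallness available a priori. Following~\cite[Section~5.4]{ZUM22}, this is resolved by designing the weighted energy $\mathcal{E}(t)$ so that the problematic contribution at the $k = 1$ level is interpolated via Gagliardo--Nirenberg into a small fraction of the top-order energy plus an absorbable $\|\vt\|_{L^2_N}^2$-source, while at the $k = 2$ level the equation itself is invoked to trade $\partial_x^2\vt$ for $\partial_t\vt$ (which is available thanks to $\vt \in C^1([0,\tau_{\max}),L^2_N)$ from Corollary~\ref{C:local_vf}), passing through a time-differentiated analogue of the same energy identity. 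A secondary bookkeeping issue is that all coefficients emerging from the chain rule applied to $\mathring{\phi}(x,t) = \phi(x + \gamma(x,t) + \sigma(t)/N)$ must be estimated purely via the $H^3_N$-control of $\gamma_x$ and the pointwise bound $\|\gamma_x\|_{L^\infty} \leq 1/2$ afforded by Proposition~\ref{p:gamma}, both of which are manifestly $N$-independent.
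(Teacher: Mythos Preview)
Your overall strategy coincides with the paper's: an energy estimate exploiting the built-in damping from $-I$, skew-adjointness of $-\beta\mathcal{J}\partial_x^2$, $N$-uniform Sobolev embeddings, and interpolation to push intermediate derivatives into either the top-order damping or the $L^2_N$-source. You also correctly identify the genuine obstacle, namely that the symmetric part of $\mathcal{J}B[\mathring\phi]$ produces a term $\langle M[\mathring\phi]\vt_{xx},\vt_{xx}\rangle_{L^2_N}$ (with $M=2\,\mathrm{sym}(\mathcal{J}B)$ having eigenvalues $\pm 2|\mathring\phi|^2$) that cannot be absorbed by the $-\|\vt_{xx}\|_{L^2_N}^2$ damping.

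Where your proposal becomes imprecise is the resolution of this obstacle. The paper does not ``trade $\partial_x^2\vt$ for $\partial_t\vt$'' or pass to a time-differentiated energy; instead it works directly with the explicit modified energy
\[
E(t)=\|\vt_{xx}(t)\|_{L^2_N}^2-\frac{1}{2\beta}\big\langle \mathcal{J}M[\mathring\phi(t)]\,\vt_x(t),\vt_x(t)\big\rangle_{L^2_N},
\]
whose cross term is chosen precisely so that, upon time-differentiation and substitution of the principal part $-\beta\mathcal{J}\vt_{xx}$ of the equation, the contribution exactly cancels the bad $\langle M\vt_{xx},\vt_{xx}\rangle$ term (using the algebraic identity $\mathcal{J}M\mathcal{J}=-M$). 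What remains after this cancellation is bilinear of order at most $\|\vt\|_{H^2_N}\|\vt\|_{H^1_N}$, which interpolates as you describe. Your substitution idea, if carried through carefully (replace one factor of $\vt_{xx}$ by $\beta^{-1}\mathcal{J}\vt_t+\cdots$, integrate by parts in $x$, and recognize a total time derivative), does in fact rediscover exactly this cross term; but as written, the phrase ``time-differentiated analogue of the same energy identity'' is ambiguous and could be read as an energy estimate on $\vt_t$, which suffers from the identical $\langle M\vt_t,\vt_t\rangle$ obstruction and does not close. You should also note that the paper first carries out the computation for $v_0\in H^4_N$ (so that all pairings are justified) and then passes to $v_0\in H^2_N$ by density and continuous dependence on initial data; this step is absent from your sketch.
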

\begin{proof} We proceed as in~\cite[Appendix~A]{HJPR} and define the energy
\begin{equation}\label{e:energy}
    E(t) = \left\|\vt_{xx}(t)\right\|_{L^2_N}^2 - \frac{1}{2\beta} \left\langle \mathcal{J} M[\mathring{\phi}(t)] \vt_x(t), \vt_x(t)\right\rangle_{L^2_N},
  \end{equation}
for $t \in [0,\tau_{\max})$, with $M[\phi]$ given by
\begin{align*}
M[\phi] =  2\left(\begin{array}{cc} -2\phi_r\phi_i & \phi_r^2 -\phi_i^2 \\ \phi_r^2 - \phi_i^2 & 2\phi_r \phi_i\end{array}\right),
\end{align*}
where we recall the notation $\phi = \phi_r + \ri \phi_i$. 

{First, using the basic Sobolev interpolation inequality
\begin{align} \label{e:Sobolev} \|f'\|_{L^2_N}^2 \leq \|f''\|_{L^2_N}\|f\|_{L^2_N}, \end{align}
for $f \in H^2_N$, which follows from integration by parts, together with}
Young's and Cauchy-Schwarz inequalities and boundedness of $\phi$, we obtain an $N$-independent constant $K > 0$ such that
\begin{align} \left\|\vt_{xx}(t)\right\|_{L^2_N}^2 \leq 2E(t) + K\left\|\vt(t)\right\|_{L^2_N}^2, \label{e:NLdamp1}\end{align}
for $t \in [0,\tau_{\max})$. {This} shows that the second derivative of $\vt(t)$ is controlled by the energy $E(t)$ and the $L^2_N$-norm of $\vt(t)$.

Next, we use the density of the subspace $H^4_N$ in $H^2_N$ to derive an inequality for the energy. Thus, we take $v_0 \in H^4_N$, for which standard local existence theory (as in Proposition~\ref{p:local_unmod}) implies that $\psi \in C\big([0,T_{\max}),H^4_N\big) \cap C^1\big([0,T_{\max}),H^2_N\big)$. Combining this with Proposition~\ref{p:gamma} yields $\vt \in  C\big([0,\tau_{\max}),H^4_N\big) \cap C^1\big([0,\tau_{\max}),H^2_N\big)$. We denote 
\begin{align*} 
B[\phi] = \left(\begin{array}{cc} 3\phi_{r}^2 + \phi_{i}^2 & 2\phi_{r}\phi_{i} \\
   2\phi_{r}\phi_{i} & \phi_{r}^2 + 3\phi_{i}^2\end{array}\right),
\end{align*}
and differentiate the energy given by~\eqref{e:energy} to obtain
\begin{align*}
\partial_t E(t) = -2 E(t) + E_1(t) + E_2(t) + E_3(t),
\end{align*}
for $t \in [0,\tau_{\max})$, where
\begin{align*}
E_1(t) &= -\frac{1}{\beta}\left\langle \mathcal{J} M[\mathring{\phi}(t)] \vt_x(t), \vt_x(t)\right\rangle_{L^2_N} - \frac{1}{2\beta} \left\langle \mathcal{J} M'[\mathring{\phi}(t)] \mathring{\phi}_t(t) \vt_x(t), \vt_x(t)\right\rangle_{L^2_N}\\
& \qquad + \, 2\Re\left\langle \mathcal{J} \left(\partial_{xx} \left(B[\mathring{\phi}(t)]\vt(t)\right) - B[\mathring{\phi}(t)]\vt_{xx}(t)\right),\vt_{xx}(t) \right\rangle_{L^2_N}\\
&\qquad - \, \Re \left\langle M'[\mathring{\phi}(t)] \mathring{\phi}_x(t) \vt_x(t), \vt_{xx}(t)\right\rangle_{L^2_N}\\
&\qquad + \, \frac{1}{\beta} \Re\left\langle \mathcal{J}M[\mathring{\phi}(t)]\partial_x \left(\left(I + \mathcal{J}(\alpha - B[\mathring{\phi}(t)])\right)\vt(t)\right),\vt_x(t)\right\rangle_{L^2_N},
\end{align*}
contains all irrelevant bilinear terms in $\vt$,
\begin{align*}
E_2(t) = 2\Re\left\langle \partial_x^2 \widetilde{\mathcal{N}}(\vt(t)), \vt_{xx}(t)\right\rangle_{L^2_N} - \frac{1}{\beta}\Re\left\langle \mathcal{J}M[\mathring{\phi}]\partial_x \widetilde{\mathcal{N}}(\vt(t)),\vt_x(t)\right\rangle_{L^2_N},
\end{align*}
consists of all higher-order nonlinear terms in $\vt$, and
\begin{align*}
E_3(t) &= 2\Re\left\langle \partial_x^2 \widetilde{\mathcal{R}}(\gamma(t),\partial_t \gamma(t), \partial_t \sigma(t)), \vt_{xx}(t)\right\rangle_{L^2_N}\\
&\qquad -\, \frac{1}{\beta} \Re \left\langle \mathcal{J}M[\mathring{\phi}]\partial_x \widetilde{\mathcal{R}}(\gamma(t),\partial_t \gamma(t), \partial_t \sigma(t)),\vt_x(t)\right\rangle_{L^2_N},
\end{align*}
contains all residual linear terms in $\vt$.

We estimate the terms $E_j(t)$ with the aid of estimates~\eqref{e:Sobolev} and~\eqref{e:NLdamp1}, the Cauchy-Schwarz and Young inequalities, boundedness of $\partial_x^l \phi$ for $l\in\NM_0$, and the $N$-uniform embedding $H^1_N(\R) \hookrightarrow L^\infty(\R)$. That is, we establish $N$-independent constants $R_1,C_l > 0$, $l = 1,\ldots,5$ such that for each $t \in [0,\tau_{\max})$ satisfying~\eqref{e:upbound} we have
  \begin{align*} |E_1(t)| &\leq C_1 \|\vt(t)\|_{H^2_N}\|\vt(t)\|_{H^1_N} \leq \frac{1}{6}\left\|\vt_{xx}(t)\right\|_{L^2_N}^2 + C_2\left\|\vt(t)\right\|_{L^2_N}^2
\\
  &  \leq \frac{1}{3}E(t) + \left(C_2 + K\right)\left\|\vt(t)\right\|_{L^2_N}^2,\\
|E_2(t)| &\leq C_3\|\vt(t)\|_{H^2_N}^3 \leq \frac{1}{6}\left\|\vt_{xx}(t)\right\|_{L^2_N}^2 + C_3\left\|\vt(t)\right\|_{L^2_N}^2 
\\
  &\leq \frac{1}{3}E(t) + (C_3 + K)\left\|\vt(t)\right\|_{L^2_N}^2,\\
|E_3(t)| &\leq C_4 \left(\|\gamma_x(t)\|_{H^3_N} + \|\partial_t \gamma(t)\|_{H^2_N} + \frac{1}{N}\|\partial_t \sigma(t)\|_{L^2_N}\right)\|\vt(t)\|_{H^2_N}\\ &\leq \frac{1}{6}\left\|\vt_{xx}(t)\right\|_{L^2_N}^2 + C_5\left(\left\|\vt(t)\right\|_{L^2_N}^2 + \|\gamma_x(t)\|_{H^3_N}^2 + \|\partial_t \gamma(t)\|_{H^2_N}^2 + |\partial_t \sigma(t)|^2\right)\\
&\leq \frac{1}{3}E(t) + (C_5+K)\left(\left\|\vt(t)\right\|_{L^2_N}^2 + \|\gamma_x(t)\|_{H^3_N}^2 + \|\partial_t \gamma(t)\|_{H^2_N}^2 + |\partial_t \sigma(t)|^2\right)\end{align*}
for $t \in [0,\tau_{\max})$. Hence, we obtain an $N$-independent constant $C_0 > 0$ such that for $t \in [0,\tau_{\max})$ satisfying~\eqref{e:upbound} we have the energy estimate
\begin{align*}
\partial_t E(t) \leq -E(t) + C_0\left(\left\|\vt(t)\right\|_{L^2_N}^2 + \|\gamma_x(t)\|_{H^3_N}^2 + \|\partial_t \gamma(t)\|_{H^2_N}^2 + |\partial_t \sigma(t)|^2\right).
\end{align*}
Integrating the latter and using~\eqref{e:NLdamp1}, we arrive at
\begin{align*}
\left\|\vt_{xx}(t)\right\|_{L^2_N}^2 &\leq 2\re^{-t}E(0) + K\left\|\vt(t)\right\|_{L^2_N}^2\\ 
&\qquad + 2C_0\int_0^t \re^{-(t-s)} \left(\left\|\vt(s)\right\|_{L^2_N}^2 + \|\gamma_x(s)\|_{H^3_N}^2 + \|\partial_s \gamma(s)\|_{H^2_N}^2 + |\partial_s \sigma(s)|^2\right)\de s,
\end{align*}
for $t \in [0,\tau_{\max})$ satisfying~\eqref{e:upbound}, which, noting that $2E(0) \leq C_*\|\vt(0)\|_{H^2_N} = C_*\|v_0\|_{H^2_N}$ for some $N$-independent constant $C_* > 0$, yields~\eqref{e:dampingineq} {for} $v_0 \in H^4_N$. 

Finally, {for} $v_0 \in H^2_N$, we approximate $v_0$ in $H^2_N$-norm by a sequence $(v_{0,n})_{n \in \NM}$ in $H^4_N$ and note that continuity with respect to initial data, cf.~\cite[Proposition~4.3.7]{CA98}, implies that the perturbed solution $\psi$ of~\eqref{e:LLE} with initial condition $\psi(0) = \phi + v_0$ is, for any $T < T_{\max}$, {approximated in $C([0,T],H^2_N)$} by the sequence of solutions $(\psi_n)_{n \in \NM}$ of~\eqref{e:LLE} with initial data $\psi_n(0) = \phi + v_{0,n}$. Observing that~\eqref{e:dampingineq} only depends on the $H^2_N$-norm of $\vt(t) = \psi(t) - \mathring{\phi}(t)$, the desired result follows by approximation.
\end{proof}

It was exploited in~\cite{ZUM22} for the case of localized perturbations that a nonlinear damping estimate on the forward-modulated perturbation yields nonlinear damping of the inverse-modulated perturbation by using that the $H^k$-norms of the inverse- and forward-modulated perturbations are equivalent (modulo absorbable errors) for any $k \in \NM_0$. In our nonlinear argument in the upcoming section we adopt a similar approach. To this end, we establish the norm equivalence ($N$-uniformly) in the current subharmonic setting following~\cite[Lemma~5.1]{ZUM22} and~\cite[Lemma~2.7]{JNRZ14}.

\begin{lemma} \label{lem:equivalence}
Let $\psi$ be as in Proposition~\ref{p:local_unmod} and $\sigma,\gamma$ and $\tau_{\max}$ as in Proposition~\ref{p:gamma}. Then, there exists an $N$-independent constant $C > 0$ such that the inverse- and forward-modulated {perturbations $v$ and $\vt$} defined by~\eqref{e:definvmod} and~\eqref{e:defforpert}, respectively, satisfy
\begin{align*}
\|v(t)\|_{H^2_N} \leq C\left(\|\vt(t)\|_{H^2_N} + \|\gamma_x(t)\|_{H^1_N}\right), \quad \|\vt(t)\|_{L^2_N} \leq C\left(\|v(t)\|_{L^2_N} + \|\gamma_x(t)\|_{H^1_N}\right),
\end{align*}
for any $t \in [0,\tau_{\max})$.
\end{lemma}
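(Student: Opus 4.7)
The plan is to express $\vt$ and $v$ in terms of each other via a substitution by the spatial change of variables $A_t(x) = x - \gamma(x,t) - \sigma(t)/N$ and its inverse $B_t$, and then estimate the resulting $\phi$-difference correction terms. For the first step, note that by definition of $\vt$ we have $\psi(y,t) = \vt(y,t) + \phi(y + \gamma(y,t) + \sigma(t)/N)$. Setting $y = A_t(x)$ and recalling $v(x,t) = \psi(A_t(x),t) - \phi(x)$, together with $A_t(x) + \gamma(A_t(x),t) + \sigma(t)/N = x + \Gamma(x,t)$ where
\[
\Gamma(x,t) := \gamma(A_t(x),t) - \gamma(x,t),
\]
yields the key identity
\[
v(x,t) = \vt(A_t(x),t) + \bigl[\phi(x + \Gamma(x,t)) - \phi(x)\bigr].
\]
Analogously, using $B_t(x) = x + \gamma(B_t(x),t) + \sigma(t)/N$, one obtains
\[
\vt(x,t) = v(B_t(x),t) + \bigl[\phi(x + \widetilde{\Gamma}(x,t)) - \phi(x + \gamma(x,t) + \sigma(t)/N)\bigr],
\]
with $\widetilde{\Gamma}(x,t) := \gamma(B_t(x),t)$.

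For the $L^2$-estimate on $\vt$, I would first apply the substitution $y = B_t(x)$, which has Jacobian $A_t'(y) = 1-\gamma_x(y,t)$, is a bijection on $[0,NT]$-intervals thanks to the $NT$-periodicity of $\gamma$, and satisfies $\|\gamma_x\|_{L^\infty} \leq 1/2$ by Proposition~\ref{p:gamma}; together with this periodicity, the substitution yields $\|v \circ B_t\|_{L^2_N} \leq C\|v\|_{L^2_N}$. Then I would bound the $\phi$-difference term by the mean value theorem as
\[
\bigl|\phi(x + \widetilde\Gamma(x,t)) - \phi(x + \gamma(x,t) + \sigma(t)/N)\bigr| \leq \|\phi'\|_{L^\infty}\bigl|\gamma(B_t(x),t) - \gamma(x,t)\bigr|,
\]
and since $|B_t(x) - x| = |\gamma(B_t(x),t) + \sigma(t)/N| \leq C(r_0)$ by Proposition~\ref{p:gamma}, another application of the mean value theorem combined with the same change of variables gives $\|\gamma(B_t(\cdot),t) - \gamma(\cdot,t)\|_{L^2_N} \leq C\|\gamma_x\|_{L^2_N}$, yielding the second claimed inequality.

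For the $H^2$-estimate on $v$, the composition $\vt \circ A_t$ is controlled through exactly the substitution calculation~\eqref{e:subst2} from the proof of Proposition~\ref{C:local_v}: since $\|\gamma_{xx}\|_{H^1_N} \leq \|\gamma\|_{H^3_N} \leq r_0$, we get $\|\vt \circ A_t\|_{H^2_N} \leq C\|\vt\|_{H^2_N}$. The bulk of the work is then the $H^2$-bound on the correction $\phi(\cdot + \Gamma(\cdot,t)) - \phi$. I would differentiate in the form
\[
\partial_x\bigl[\phi(x+\Gamma)-\phi(x)\bigr] = \Gamma_x\,\phi'(x+\Gamma) + \Gamma \int_0^1 \phi''(x+s\Gamma)\,\de s,
\]
and similarly for $\partial_x^2$, so that the non-decaying pure-$\phi^{(k)}(x)$ parts cancel out and every remaining term carries a factor $\Gamma$, $\Gamma_x$ or $\Gamma_{xx}$. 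The last step is to show that
\[
\|\Gamma\|_{L^2_N} + \|\Gamma_x\|_{L^2_N} + \|\Gamma_{xx}\|_{L^2_N} \leq C\|\gamma_x\|_{H^1_N},
\]
which I would do by writing, for $k=0,1,2$,
\[
\gamma^{(k)}(A_t(x),t) - \gamma^{(k)}(x,t) = -\bigl(\gamma(x,t)+\sigma(t)/N\bigr)\int_0^1 \gamma^{(k+1)}\bigl(x - s(\gamma(x,t)+\sigma(t)/N),t\bigr)\,\de s,
\]
and applying the change of variables $y = x - s(\gamma(x,t)+\sigma(t)/N)$ whose Jacobian $1 - s\gamma_x$ is bounded away from zero and which respects $NT$-periodicity. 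The factor $\gamma(x,t) + \sigma(t)/N$ is absorbed into $C(r_0)$ by Proposition~\ref{p:gamma}, so each of these $L^2$-norms reduces to a norm of $\gamma^{(k+1)}$ and hence to $\|\gamma_x\|_{H^1_N}$.

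The main obstacle is the bookkeeping in the $H^2$-bound: one must verify that after taking two spatial derivatives of $\phi(x+\Gamma)-\phi(x)$, every resulting term has at least one factor of $\Gamma$ or its derivatives (so no bare $\phi^{(k)}(x)$ terms survive), and that derivatives of $\Gamma$ of order up to two can be controlled by $\|\gamma_x\|_{H^1_N}$ alone, rather than by $\|\gamma\|$ or $|\sigma|$, despite the appearance of $\gamma$ and $\sigma$ inside the arguments. The key point that makes this work is the pointwise identity $\Gamma(x,t) = (A_t(x)-x)\int_0^1 \gamma_x(x+s(A_t(x)-x),t)\,\de s$, in which $\gamma_x$ is factored out explicitly, combined with the $r_0$-uniform bound on $|A_t(x)-x|$ from Proposition~\ref{p:gamma}.
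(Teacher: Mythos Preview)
Your approach is essentially the same as the paper's: both work through the substitution $A_t(x) = x - \gamma(x,t) - \sigma(t)/N$ and its inverse, reduce the comparison of $v$ and $\vt$ to estimating a $\phi$-difference, and use the mean value theorem together with the $r_0$-bounds from Proposition~\ref{p:gamma}. The paper computes $\vt - v\circ A_t^{-1}$ directly, while you compute $v - \vt\circ A_t$; these are equivalent up to a substitution.

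There is, however, a small but genuine bookkeeping slip in your final step. To bound $\|\Gamma_{xx}\|_{L^2_N}$ you propose applying the mean-value identity
\[
\gamma^{(k)}(A_t(x),t)-\gamma^{(k)}(x,t) = -\bigl(\gamma(x,t)+\tfrac{1}{N}\sigma(t)\bigr)\int_0^1 \gamma^{(k+1)}\bigl(x-s(\gamma(x,t)+\tfrac{1}{N}\sigma(t)),t\bigr)\,\de s
\]
for $k=0,1,2$. For $k=2$ this gives a bound in terms of $\|\gamma^{(3)}\|_{L^2_N}$, which is part of $\|\gamma_x\|_{H^2_N}$, \emph{not} $\|\gamma_x\|_{H^1_N}$, so your claim that ``each of these $L^2$-norms reduces to a norm of $\gamma^{(k+1)}$ and hence to $\|\gamma_x\|_{H^1_N}$'' fails at $k=2$. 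The fix is immediate: the mean-value trick is only needed for $k=0$, where one must extract smallness from $\gamma_x$ since $\|\gamma\|_{L^2_N}$ itself is merely bounded by $r_0$. For $k\geq 1$ the terms $\gamma^{(k)}(A_t(\cdot),t)$ and $\gamma^{(k)}(\cdot,t)$ are each already $O(\|\gamma_x\|_{H^1_N})$ in $L^2_N$ (the first by the substitution estimate, the second trivially), so the triangle inequality suffices. With this adjustment your argument goes through and matches the paper's.
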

\begin{proof} 
Take $t \in [0,\tau_{\max})$. By Proposition~\ref{p:gamma} there exists an $N$-independent constant $r_0 > 0$ such that
\begin{equation}\label{e:R}
  \|\gamma(t)\|_{H^3_N} + |\sigma(t)| < r_0, \qquad \|\gamma_x(t)\|_{L^\infty} \leq \frac12.
\end{equation}
As in the proof of Proposition~\ref{C:local_v} we consider the map $A_t \colon \R \to \R$ given by
  \[
  A_t(x) = x - \gamma(x,t) - \frac{1}{N} \sigma(t),
  \]
which is invertible by~\eqref{e:R} and satisfies
\begin{equation}\label{e:AtNT}
A_t(NT) - A_t(0) = NT,
\end{equation}
by the $NT$-periodicity of $\gamma(\cdot,t)$. Using the equalities
    \[
    x = A_t(A_t^{-1}(x)) = A_t^{-1}(x) - \gamma(A_t^{-1}(x),t) - \frac{1}{N}\sigma(t)
    \]
and the mean value theorem, we find that
\begin{align} \label{e:Atest}
\begin{split}
\left|A_t^{-1}(x) - \left(x + \gamma(x,t) + \frac{1}{N}\sigma(t)\right)\right| &= \left|\gamma(A_t^{-1}(x),t) - \gamma(x,t)\right| \\ 
&\leq \|\gamma_x(t)\|_{L^\infty} \left|A_t^{-1}(x) - x\right|\\ 
&\leq \|\gamma_x(t)\|_{L^\infty} \left(\left|\gamma\left(A_t^{-1}(x),t\right)\right| +  \frac{1}{N} |\sigma(t)|\right),
\end{split}
\end{align}
for all $x \in \R$. Moreover, the inverse function theorem implies
\begin{align} 
\label{e:derid}
\partial_x \left(A_t^{-1}(x)\right) = \frac{1}{1-\gamma_x\left(A_t^{-1}(x),t\right)}, \quad \partial_x^2 \left(A_t^{-1}(x)\right) = \frac{\gamma_{xx}\left(A_t^{-1}(x),t\right)}{\left(1-\gamma_x\left(A_t^{-1}(x),t\right)\right)^3},\end{align}
for all $x \in \R$. Combing the latter with {\eqref{e:R} and~\eqref{e:Atest} we find\footnote{Throughout this proof the notation $A\lesssim B$ means that there exists an $N$- and $t$-independent constant $K>0$ such that $A\leq K B$.}}
\begin{align} \label{e:Atest2}
\left|\partial_x \left(A_t^{-1}(x)\right) - 1\right| \lesssim \left|\gamma_x\left(A_t^{-1}(x),t\right)\right|, \quad \left|\partial_{xx} \left(A_t^{-1}(x)\right)\right| \lesssim\left|\gamma_{xx}\left(A_t^{-1}(x),t\right)\right|,
\end{align}
for all $x \in \R$.

Using the properties~\eqref{e:AtNT} and~\eqref{e:derid} of the map $A_t$, the $N$-uniform embedding $H^1_N \hookrightarrow L^\infty(\R)$, Young's inequality, and the bounds~\eqref{e:R} we find the inequalities
\begin{align} 
\label{e:subst22}
\begin{split}
\left\|f\left(A_t^{-1}(\cdot)\right)\right\|_{L^2_N}^2
&=  \int_{A_t(0)}^{A_t(NT)} \left|f\left(A_t^{-1}(y)\right)\right|^2 \de y  
= \int_{0}^{NT} \left|f\left(x\right)\right|^2 A_t'(x) \de x 
\lesssim\|f\|_{L^2_N}^2,\\
\left\|f\left(A_t(\cdot)\right)\right\|_{L^2_N}^2
&=  \int_{0}^{NT} \left|f\left(A_t(y)\right)\right|^2 \de y  
= \int_{A_t(0)}^{A_t(NT)} \frac{\left|f\left(x\right)\right|^2}{1-\gamma_x(A_t^{-1}(x),t)} \de x \lesssim\|f\|_{L^2_N}^2,
\end{split}
\end{align}
for $f \in L^2_N$, and
\begin{align} 
\label{e:subst}
\begin{split}
\left\|f\left(A_t^{-1}(\cdot)\right)\right\|_{H^2_N}^2 &\lesssim \int_{A_t(0)}^{A_t(NT)} \left|f\left(A_t^{-1}(y)\right)\right|^2 \de y
+ \int_{A_t(0)}^{A_t(NT)} \left|f''\left({A_t^{-1}}(y)\right)\right|^2 \left|\partial_y\left(A_t^{-1}(y)\right)\right|^4 \de y\\ 
&\qquad\quad + \, \int_{A_t(0)}^{A_t(NT)} \left|f'\left({A_t^{-1}}(y)\right)\right|^2 \left(\left|\partial_y\left(A_t^{-1}(y)\right)\right|^2 + \left|\partial_{y}^2\left(A_t^{-1}(y)\right)\right|^2\right) \de y\\
&= \int_{0}^{NT} \left|f\left(x\right)\right|^2 A_t'(x) \de x
+ \int_{0}^{NT} \frac{\left|f''\left(x\right)\right|^2}{\left(1-\gamma_x(x,t)\right)^3} \de x\\ 
&\qquad\quad + \,\int_{0}^{NT} \frac{\left|f'\left(x\right)\right|^2}{1-\gamma_x(x,t)}\left(1 + \frac{\gamma_{xx}(x,t)^2}{\left(1-\gamma_x(x,t)\right)^4}\right) \de x \\ 
&\lesssim\|f\|_{H^2_N}^2.
\end{split} 
\end{align}
for $f \in H^2_N$. 

{Recalling the formulas~\eqref{e:definvmod} and~\eqref{e:defforpert} for the inverse- and forward-modulated perturbations and applying}
the mean value theorem, Young's inequality, the $N$-uniform embedding $H^1_N \hookrightarrow L^\infty(\R)$, the bounds~\eqref{e:R}, and the estimates~\eqref{e:Atest},~\eqref{e:Atest2} and~\eqref{e:subst22}, we find 
\begin{align*} 
\left\|\vt(\cdot,t) - v\left(A_t^{-1}(\cdot),t\right)\right\|_{L^2_N} 
&\lesssim\|\phi'\|_{L^\infty} \|\gamma_x(t)\|_{L^\infty} \left(\left\|\gamma\left(A_t^{-1}(\cdot),t\right)\right\|_{L^2_N} + \frac{1}{N} \|\sigma(t)\|_{L^2_N}\right) \lesssim \|\gamma_x(t)\|_{H^1_N},\\
\left\|\vt(\cdot,t) - v\left(A_t^{-1}(\cdot),t\right)\right\|_{H^2_N} 
&\lesssim\|\phi'\|_{W^{2,\infty}} \left(\|\gamma_x(t)\|_{L^\infty} \left(\left\|\gamma\left(A_t^{-1}(\cdot),t\right)\right\|_{L^2_N} + \frac{1}{N} \|\sigma(t)\|_{L^2_N}\right)\right.\\ 
&\quad\left. \phantom{\frac{1}{N}} + \left\|\gamma_x(t)\right\|_{H^1_N} + \left\|\gamma_x\left(A_t^{-1}(\cdot),t\right)\right\|_{L^2_N} + \left\|\gamma_{xx}\left(A_t^{-1}(\cdot),t\right)\right\|_{L^2_N}\right)\\
&\lesssim\|\gamma_x(t)\|_{H^1_N}, 
\end{align*}
Subsequently applying~\eqref{e:subst2} yields
\begin{align*}
\left\|\vt(A_t(\cdot),t) - v\left(\cdot,t\right)\right\|_{H^2_N}
\lesssim \left\|\vt(\cdot,t) - v\left(A_t^{-1}(\cdot),t\right)\right\|_{H^2_N} \lesssim\|\gamma_x(t)\|_{H_N^1}.\end{align*}
Finally, combining the latter two estimates with~\eqref{e:subst2},~\eqref{e:subst22} and~\eqref{e:subst}, we obtain
\begin{align*}
  \|v(t)\|_{L^2_N} &
  \lesssim \left\|\vt(A_t(\cdot),t)\right\|_{L^2_N} + \left\|\gamma_x(t)\right\|_{H^1_N}
  \lesssim \|\vt(t)\|_{L^2_N} + \|\gamma_x(t)\|_{H^1_N},\\
  \|\vt(t)\|_{H^2_N} &
  \lesssim \left\|v\left(A_t^{-1}(\cdot),t\right)\right\|_{H^2_N} + \|\gamma_x(t)\|_{H^1_N}
  \lesssim \|v(t)\|_{H^2_N} + \|\gamma_x(t)\|_{H^1_N},
\end{align*}
which concludes the proof.
\end{proof}

\section{Nonlinear Stability Analysis} \label{S:proof}

We prove our main result, Theorem~\ref{T:main}, by {taking
  \begin{equation}\label{e:gammanl}
    \gamma_{\mathrm{nl}}(x,t) = \gamma(x,t) +\frac1N \sigma(t),
    \end{equation}
and a suitably chosen constant $\sigma_{\mathrm{nl}}$.
We apply} the linear estimates, stated in Proposition~\ref{P:lin}, to the nonlinear iteration scheme, which was established in~\S\ref{S:nonlinear} and consists of equations for the inverse-modulated perturbation $v$ and the phase modulation functions $\sigma$ and $\gamma$. We use the nonlinear damping estimate on the forward-modulated perturbation $\vt$, obtained in Proposition~\ref{P:damping1}, as well as the connection between the norms of the inverse- and forward-modulated perturbations established in Lemma~\ref{lem:equivalence}, to control regularity in the iteration scheme.

\begin{proof}[Proof of Theorem~\ref{T:main}]
We start by defining a template function, which controls the phase modulation functions $\gamma \colon [0,\tau_{\max}) \to H^4_N$, $\sigma \colon [0,\tau_{\max}) \to \R$ and the forward-modulated perturbation $\vt \colon [0,\tau_{\max}) \to H^2_N$ in the nonlinear argument. By Proposition~\ref{p:gamma} and Corollary~\ref{C:local_vf}, the template function $\eta \colon [0,\tau_{\max}) \to \R$ given by
\begin{align*}
\eta(t) &= \sup_{0\leq s\leq t} \left[(1+s)^{\frac{3}{4}}\left(\|\vt(s)\|_{H^2_N} + \left\|\partial_x \gamma(s)\right\|_{H^3_N} + \left\|\partial_s \gamma(s)\right\|_{H^2_N}\right)\right.\\ 
&\qquad\qquad\qquad\qquad \left. +\, (1+s)^{\frac{1}{4}}\left\|\gamma(s)\right\|_{L^2_N} + (1+s)^{\frac32} \left|\partial_s \sigma(s)\right| + |\sigma(s)|\right],
\end{align*}
is continuous, positive and monotonically increasing.

As usual, {the key step of the} approach is to prove that there exist $N$- and $t$-independent constants $R > 0$ and $C \geq 1$ such that for all $t \in [0,\tau_{\max})$ with $\eta(t) \leq R$ we have the inequality
  \begin{align}
    \eta(t) \leq C\left(E_0 + \eta(t)^2\right). \label{e:etaest}
  \end{align}
To this end, we take $R := \min\{\tfrac12 r_0,R_1\}$ with $r_0 > 0$ as in Proposition~\ref{p:gamma} and $R_1 > 0$ as in Proposition~\ref{P:damping1}, and assume $t \in [0,\tau_{\max})$ is such that
\begin{equation}\label{e:eta}
\eta(t) \leq R.
\end{equation}
We note that by Proposition~\ref{p:gamma} it holds
\begin{align} \label{e:gammax}
\|\gamma_x(s)\|_{L^\infty} \leq \frac12,
\end{align}
for all $s \in [0,t]$.

First, we point out that by Proposition~\ref{C:local_v} the inverse-modulated perturbation $v(r)$, given by~\eqref{e:definvmod}, lies in $H^2_N$ for all $r \in [0,t]$. In particular, Lemma~\ref{lem:equivalence} and the inequality~\eqref{e:eta} yield the bound\footnote{Throughout this proof the notation $A\lesssim B$ means that there exists an $N$- and $t$-independent constant $K>0$ such that $A\leq K B$.} 
\begin{align} \label{e:bdH2m}
\|v(r)\|_{H^2_N} \lesssim \frac{\eta(r)}{(1+r)^{\frac34}},
\end{align}
for $r \in [0,t]$. Therefore,~\eqref{e:eta},~\eqref{e:gammax},~\eqref{e:bdH2m} and Lemma~\ref{lem:mod_nonlL2} afford the nonlinear estimate
\begin{align} \label{e:nlest}
\left\|\mathcal{N}(v,\gamma,\partial_r \gamma,\partial_r \sigma)(r)\right\|_{L_N^2 \cap L_N^1} &\lesssim \frac{\eta(r)^2}{(1+r)^{\frac32}}, 
\end{align}
for $r \in [0,t]$. Applying the linear estimates in Proposition~\ref{P:lin} and the nonlinear bound~\eqref{e:nlest} to the Duhamel formulations~\eqref{e:intv2} and~\eqref{e:intgamma2}, we arrive at
\begin{align}
\label{e:duh1}
\|v(s)\|_{L^2_N} &\lesssim \frac{E_0}{(1+s)^{\frac34}} + \int_0^s \frac{\eta(r)^2}{\re^{\mu(s-r)}(1+r)^{\frac32}} \de r + \int_0^s \frac{\eta(r)^2}{(1+s-r)^{\frac34}(1+r)^{\frac32}} \de r \lesssim \frac{E_0 + \eta(s)^2}{(1+s)^{\frac34}},
\end{align}
and
\begin{align}
\label{e:duh2}
\begin{split}
\|\partial_x^\ell \partial_s^j \gamma(s)\|_{L^2_N} &\lesssim \frac{E_0}{(1+s)^{\frac34}} + \int_0^s \frac{\eta(r)^2}{(1+s-r)^{\frac34}(1+r)^{\frac32}} \de r \lesssim \frac{E_0 + \eta(s)^2}{(1+s)^{\frac34}},\\
\|\gamma(s)\|_{L^2_N} &\lesssim \frac{E_0}{(1+s)^{\frac14}} + \int_0^s \frac{\eta(r)^2}{(1+s-r)^{\frac14}(1+r)^{\frac32}} \de r \lesssim \frac{E_0 + \eta(s)^2}{(1+s)^{\frac14}},
\end{split}
\end{align}
for all $s \in [0,t]$ and $\ell, j \in \NM_0$ with $1 \leq \ell + 2j \leq 4$. On the other hand, we use the Cauchy-Schwarz inequality and the properties $\|\chi\|_{L^\infty} = 1$ and $\chi'(s) = 0$ for $s \in \R \setminus [1,2]$ and the estimates~\eqref{e:eta} and~\eqref{e:nlest} to bound the right-hand side of~\eqref{e:intsigma2} as
\begin{align}
\label{e:duh3}
\begin{split}
|\sigma(s)| &\lesssim E_0 + \int_0^s \frac{\eta(r)^2}{(1+r)^{\frac32}} \de r \lesssim E_0 + \eta(s)^2,\\
|\partial_t \sigma(s)| &\lesssim |\chi'(s)| E_0 + \int_0^s \frac{|\chi'(s-r)|\eta(r)^2}{(1+r)^{\frac32}} \de r \lesssim \frac{E_0 + \eta(s)^2}{(1+s)^{\frac32}},
\end{split}
\end{align}
for all $s \in [0,t]$. Next, we combine Lemma~\ref{lem:equivalence} with~\eqref{e:duh1} and~\eqref{e:duh2} to arrive at
\begin{align}
\label{e:duh4}
\|\vt(s)\|_{L^2_N} \lesssim \frac{E_0 + \eta(s)^2}{(1+s)^{\frac34}},
\end{align}
for $s \in [0,t]$. Finally, by Proposition~\ref{P:damping1} and the inequalities~\eqref{e:duh2},~\eqref{e:duh3} and~\eqref{e:duh4} we obtain
\begin{align} \label{e:damping}
\|\vt(t)\|_{H^2_N}^2 &\lesssim \re^{-t} E_0^2 + \frac{\left(E_0 + \eta(t)^2\right)^2}{(1+t)^{\frac{3}{2}}} + \int_0^t \frac{\re^{-(t-s)} \left(E_0 + \eta(s)^2\right)^2}{(1+s)^{\frac{3}{2}}} \de s \lesssim \frac{\left(E_0 + \eta(t)^2\right)^2}{(1+t)^{\frac{3}{2}}}.
\end{align}
Hence, combining the inequalities~\eqref{e:duh2},~\eqref{e:duh3} and~\eqref{e:damping} yields an $N$- and $t$-independent constant $C \geq 1$ such that the key inequality~\eqref{e:etaest} is satisfied.

{To end the proof of Theorem~\ref{T:main},} we set $\eps = \min\{\frac{1}{4C^2},\frac{R}{2C}\} > 0$ and take $E_0 \in (0,\eps)$. Then, as outlined in the proof of~\cite[Theorem~1.3]{HJPR}, inequality~\eqref{e:etaest} yields $\eta(t) \leq 2CE_0 \leq R$ for all $t \in [0,\tau_{\max})$. Consequently,~\eqref{e:gammablowup} cannot hold and we have $\tau_{\max} = T_{\max}$ by Proposition~\ref{p:gamma}. Furthermore, the mean value theorem implies 
\begin{align} \label{e:psib}
\begin{split}
\|\psi(t)\|_{H^2_N} &\lesssim \|\vt(t)\|_{H^2_N} + \left\|\phi\left(\cdot + \gamma(\cdot,t) + \frac{1}{N}\sigma(t)\right) - \phi(\cdot)\right\|_{H^2_N} + \left\|\phi\right\|_{H^2_N}\\
&\lesssim \|\vt(t)\|_{H^2_N} + \left\|\phi'\right\|_{W^{2,\infty}} \left(1 + \|\gamma(t)\|_{H^2_N} + \frac{1}{\sqrt{N}}|\sigma(t)|\right) + \|\phi\|_{H^2_N},
\end{split}
\end{align}
for $t \in [0,\tau_{\max})$. Hence, by~\eqref{e:psib} and the fact that $\eta(t) \leq R$ for all $t \in [0,\tau_{\max})$,~\eqref{e:psiblowup} cannot hold and Proposition~\ref{p:local_unmod} yields $\tau_{\max} = T_{\max} = \infty$. We conclude that we have 
\begin{align} \label{e:etaest2}
\eta(t) \leq 2CE_0 \leq R,
\end{align}
for all $t \geq 0$, which yields the last two estimates in~\eqref{e:MTmodder} with $\gamma_{\mathrm{nl}}$ as defined in~\eqref{e:gammanl}. In addition, the mean value theorem affords the inequality 
\begin{align*} 
\left\|\psi(t) - \phi\right\|_{H^2_N} &\leq \|\vt(t)\|_{H^2_N} + \|\phi'\|_{W^{2,\infty}}\left(\|\gamma(t)\|_{H^2_N} + \frac{1}{\sqrt{N}} |\sigma(t)|\right),
\end{align*}
for $t \geq 0$, where we use~\eqref{e:etaest2}. Combining the latter with~\eqref{e:etaest2} proves the first estimate in~\eqref{e:MTmodder}. 

Finally, {we set
\[
 \sigma_{\mathrm{nl}} = \LA\widetilde{\Phi}_0,v_0\RA_{L^2_N} + \int_0^\infty \LA\widetilde{\Phi}_0,\mathcal{N}(v,\gamma,\partial_s \gamma,\partial_s \sigma)(s)\RA_{L^2_N} \de s,
\]
which is well-defined and satisfies $|\sigma_{\mathrm{nl}}| \lesssim E_0$ by the Cauchy-Schwarz inequality and the estimates~\eqref{e:nlest} and~\eqref{e:etaest2}. Using the Cauchy-Schwarz inequality, the properties} $\|\chi\|_{L^\infty} = 1$ and $\chi(t) = 1$ for $t \in [2,\infty)$, and estimates~\eqref{e:nlest} and~\eqref{e:etaest2} we obtain
\begin{align*}
|\sigma(t) - \sigma_{\mathrm{nl}}| \lesssim \int_{t-2}^\infty \frac{\eta(s)^2}{(1+s)^{\frac32}} \de s \lesssim \frac{E_0}{(1+t)^{\frac12}}, 
\end{align*}
for $t \geq 2$. On the other hand, the mean value theorem and~\eqref{e:etaest2} yield
\begin{align*}
\left\|\psi(t) - \phi\left(\cdot + \frac{1}{N}\sigma_{\mathrm{nl}} \right)\right\|_{H^2_N} & \lesssim \left\|\psi(t) - \phi\left(\cdot + \frac{1}{N}\sigma(t) \right)\right\|_{H^2_N} + \frac{1}{\sqrt{N}}\|\phi'\|_{W^{2,\infty}} |\sigma(t) - \sigma_{\mathrm{nl}}|\\
&\lesssim \|\vt(t)\|_{H^2_N} + \|\phi'\|_{W^{2,\infty}} \|\gamma(t)\|_{H^2_N}
+ \frac{1}{\sqrt{N}}\|\phi'\|_{W^{2,\infty}} |\sigma(t) - \sigma_{\mathrm{nl}}|,
\end{align*}
for $t \geq 0$. The last two estimates and~\eqref{e:etaest2} justify the remaining inequalities in~\eqref{e:MTmodder}, and complete the proof.
\end{proof}

Finally, it remains to prove Corollary~\ref{C:combined}.  
\begin{proof}[Proof of Corollary~\ref{C:combined}]
Let $\phi$, $\varepsilon$ and $M$ be as in Theorem~\ref{T:main}.  Fix $N\in\NM$, let $\delta_N$ be as in~\eqref{e:deltaN}, and, for $\delta\in(0,\delta_N)$, let $\eps_\delta$ be as in Theorem~\ref{T:SS}. Let $v_0\in H^2_N$ satisfy $E_0 := \|v_0\|_{H^2_N \cap L^1_N} <\max\{\eps,\eps_\delta\}$.

If $\eps_\delta > \eps$, we take $T_\delta=0$, and the proof is finished by Theorem~\ref{T:SS}. Otherwise, Theorem~\ref{T:main} yields a constant $\sigma_{\mathrm{nl}} \in \R$ and a global mild solution $\psi \in C\big([0,\infty),H^1_N\big)$ of~\eqref{e:LLE} satisfying 
\begin{align} \label{e:eees}
\left\|\psi(\cdot,t) - \phi\left(\cdot + \frac{1}{N}\sigma_{\mathrm{nl}} \right)\right\|_{H^1_N} \leq ME_0 (1+t)^{-\frac{1}{4}},
\end{align}
for all $t>0$. Thus, there exists a (minimal) time $T_\delta\geq0$ such that $ME_0(1+t)^{-1/4} < \eps_\delta$ for $t \geq T_\delta$ so that 
\begin{align} \label{e:ees}
\left\|\psi\left(\cdot - \frac{1}{N} \sigma_{\rm nl},t\right) - \phi\left(\cdot\right)\right\|_{H^1_N} = \left\|\psi(\cdot,t) - \phi\left(\cdot + \frac{1}{N}\sigma_{\mathrm{nl}} \right)\right\|_{H^1_N}< \eps_\delta,
\end{align}
for all $t\geq T_\delta$. If we take the initial datum $\widetilde{\psi}_0=\psi(\cdot - \frac{1}{N}\sigma_{\rm nl},T_\delta)\in H^2_N,$ then the perturbation $\smash{\widetilde{v}_0 := \widetilde{\psi}_0 - \phi \in H^1_N}$ satisfies $\smash{\|\widetilde{v}_0\|_{H^1_N}<\eps_\delta}$ by~\eqref{e:ees}. By uniqueness of solutions, cf.~Proposition~\ref{p:local_unmod}, the solution $\smash{\widetilde{\psi}}$ of~\eqref{e:LLE} with initial condition $\smash{\widetilde{\psi}(0) = \widetilde{\psi}_0}$ satisfies $\smash{\widetilde{\psi}(x,t) = \psi(x - \frac{1}{N}\sigma_{\rm nl},t + T_\delta)}$ for $x \in \R$ and $t \geq 0$. On the other hand, Theorem~\ref{T:SS} yields constants $\tilde\gamma \in \R$ and $C_\delta > 0$ such that
\begin{align} \label{e:es}
\begin{split}
\left\|\psi(\cdot,t + T_\delta) - \phi\left(\cdot + \frac{\sigma_{\rm nl}}{N} + \tilde\gamma \right)\right\|_{H^1_N} &= \left\|\psi\left(\cdot -  \frac{\sigma_{\rm nl}}{N},t + T_\delta\right) - \phi\left(\cdot + \tilde\gamma \right)\right\|_{H^1_N} \\&= \left\|\widetilde{\psi}(\cdot,t) - \phi\left(\cdot + \tilde\gamma \right)\right\|_{H^1_N}\\
&\leq C_\delta\left\|\widetilde{v}_0\right\|_{H^1_N} \re^{-\delta t} \leq C_\delta M E_0 \re^{\delta T_\delta} \re^{-\delta (t+T_\delta)},
\end{split}
\end{align}
for all $t\geq 0$, where we use that $\|\tilde{v}_0\|_{H^1_N} \leq ME_0(1+T_\delta)^{-\frac14} \leq ME_0$ by~\eqref{e:eees}. Comparing estimates~\eqref{e:eees} and~\eqref{e:es} and letting $t \to \infty$ implies $\smash{\tilde\gamma} = 0$. Finally, taking $M_\delta = \smash{C_\delta M \re^{\delta T_\delta}}$ completes the proof.
\end{proof}
\appendix
\renewcommand*{\thesection}{\Alph{section}}

\section{Proof of Proposition~\ref{P:lin}} \label{app:linear}

{
Assume that $\phi$ is a smooth, $T$-periodic stationary solution of the LLE~\eqref{e:LLE}, which is diffusively spectrally stable in the sense of Definition~\ref{Def:spec_stab}. We briefly recall below the main steps of the linear analysis from
\cite{HJP21} leading to the result in Proposition~\ref{P:lin}.

The starting point of this analysis is the Floquet-Bloch theory for $NT$-periodic functions developed in~\cite{HJP21}. Setting\footnote{The set $\Omega_N$ is the analogue for $NT$-periodic functions of the interval $[-\pi/T,\pi/T)$ in the better known  Floquet-Bloch theory for functions in $L^2(\R)$.}
\[
\Omega_N=\left\{\xi\in[-\pi/T,\pi/T):\re^{\ri\xi NT}=1\right\},
\]
a function $g\in L^2_N$ can be represented by the inverse Bloch formula
\[
g(x)=\frac{1}{NT}\sum_{\xi\in\Omega_N} \re^{\ri\xi x}\mathcal{B}_T(g)(\xi,x),
\]
in which $\mathcal{B}_T$ is the $T$-periodic Bloch transform
defined as
\[
\mathcal{B}_T(g)(\xi,x) =\sum_{\ell\in\ZM}\re^{2\pi \ri\ell x/T}\hat{g}\left(\xi+2\pi\ell/T\right),\qquad\xi\in\Omega_N,~x\in\RM,
\]
where $\hat{g}$ denotes the Fourier transform of $g$ on the torus given by
\[
\hat{g}(z):=\int_{-NT/2}^{NT/2} \re^{-\ri zy}g(y)\de y.
\]
Accordingly, for the operator $\mathcal{A}[\phi]$, acting on $L^2_N$, we have the identity
\[
\mathcal{B}_T\left(\mathcal{A}[\phi]v\right)(\xi,x) = \mathcal{A}_\xi[\phi]\mathcal{B}_T(v)(\xi,x), \quad v\in L^2_N,
\]
in which $\mathcal{A}_\xi[\phi]$, acting on $L^2_{\rm per}(0,T)$, are the associated Bloch operators introduced in Definition~\ref{Def:spec_stab}.

An important consequence of this Floquet-Bloch decomposition is the spectral decomposition
\[
\sigma_{L^2_N}\left(\mathcal{A}[\phi]\right)=\bigcup_{\xi\in\Omega_N}\sigma_{L^2_{\rm per}(0,T)}\left(\mathcal{A}_\xi[\phi]\right),
\]
which characterizes the $L_N^2$-spectrum of $\mathcal{A}[\phi]$ in terms of the union of the eigenvalues (including multiplicities) of the $1$-parameter family of Bloch operators $\left\{\mathcal{A}_\xi[\phi]\right\}_{\xi\in\Omega_N}$.
Furthermore, a direct consequence of the diffusive spectral stability of $\phi$ is that there is an analytic curve $\lambda_c(\xi)$ of simple eigenvalues of the Bloch operators $\mathcal A_\xi[\phi]$, which expands as
\begin{equation}\label{e:lambdac}
\lambda_c(\xi)=\ri a\xi- d \xi^2+\mathcal{O}(|\xi|^3),
\end{equation}
for some $a\in\RM$ and $d>0$, while the rest of the spectrum is bounded away from the imaginary axis. The eigenfunction $\Phi_\xi$ associated with $\lambda_c(\xi)$ is a smooth function, depending analytically on $\xi$, which expands as
\[
\Phi_\xi=\phi'+\mathcal{O}(|\xi|).
\]

As shown in~\cite{HJP21}, both the operator $\mathcal{A}[\phi]$, acting on $L^2_N$, and the associated Bloch operators $\mathcal{A}_\xi[\phi]$, acting on $L^2_{\rm per}(0,T)$, generate $C^0$-semigroups. Furthermore, we have the identity
\[
\mathcal{B}_T\left(\re^{\mathcal{A}[\phi]t}v\right)(\xi,x) =
\left(\re^{\mathcal{A}_\xi[\phi]t}\mathcal{B}_T(v)(\xi,\cdot)\right)(x), \quad v\in L^2_N,
\]
and the representation formula
\begin{equation}\label{e:bloch_soln}
\re^{\mathcal{A}[\phi]t}v(x) = \frac{1}{NT}\sum_{\xi\in\Omega_N}\re^{\ri\xi x}\re^{\mathcal{A}_\xi[\phi]t}\mathcal{B}_T(v)(\xi,x), \quad v\in L^2_N,
\end{equation}
which is used to obtain the decomposition~\eqref{e:semidecomp0} of the semigroup $\re^{\mathcal{A}[\phi]t}$. Without going into details, we only recall the formula of the principal part $s_{p,N}(t)$ of the semigroup $\re^{\mathcal A[\phi] t}$, given by
\begin{equation}\label{e:spN}
s_{p,N}(t)v(x)=\frac{1}{NT}\sum_{\xi\in\Omega_N \setminus \{0\}}\rho(\xi)\re^{\ri\xi x}\re^{\lambda_c(\xi)t}\left\langle\widetilde{\Phi}_\xi,\mathcal{B}_T(v)(\xi,\cdot)\right\rangle_{L_{\rm per}^2(0,T)}, \quad v\in L^2_N,
\end{equation}
in which $\rho$ is a smooth cutoff function satisfying $\rho(\xi)=1$ for $|\xi|<\xi_1/2$ and $\rho(\xi)=0$ for $|\xi|>\xi_1$, with $\xi_1 \in [-\pi/T,\pi/T)$ suitably chosen, and $\widetilde{\Phi}_\xi$ is the smooth eigenfunction of the adjoint operator $\mathcal{A}_\xi[\phi]^*$ associated with the eigenvalue $\smash{\overline{\lambda_c(\xi)}}$, normalized to satisfy $\smash{\left\langle\widetilde{\Phi}_\xi,\Phi_\xi\right\rangle_{L_{\rm per}^2(0,T)}}=1$. We refer to~\cite{HJP21} for the formula for $S_N(t)$ and the proof of its decay property~\eqref{L:mod_bd2}.

The decay properties of $s_{p,N}(t)$ in~\eqref{L:mod_bd1} are obtained by directly estimating the sum on the right hand side of~\eqref{e:spN}, using the expansion~\eqref{e:lambdac} of the eigenvalue $\lambda_c(\xi)$, noting that
\[
\left|\LA\widetilde{\Phi}_\xi, \mathcal{B}_T(v)(\xi,\cdot)\RA_{L_{\rm per}^2(0,T)}\right| 
\lesssim\|v\|_{L_N^1},\quad \left|\rho(\xi)\re^{\ri\xi x}e^{\lambda_c(\xi)t}\right|\lesssim \re^{-d\xi^2 t},
\]
and 
\[
\frac1N\sum_{\xi\in\Omega_N}\xi^{2n}\re^{-2d\xi^2 t} \lesssim (1+t)^{-\frac12-n}.
\]
for $n\in\NM_0$; see~\cite{HJP21} for more details.\footnote{See also~\cite{HJPR} where similar estimates have been obtained in the case of localized perturbations for functions $v\in L^1(\RM)\cap L^2(\RM)$, or see~\cite{JP21} for details on how to get uniform bounds on the Riemann sum.}
}

\section{Local Theory} \label{app:local}

The goal of this appendix is to establish Proposition \ref{p:gamma}, which provides local existence for the phase modulation functions.  
To this end, we first prove the following preliminary result.

\begin{lemma} \label{L:loc_gamma}
Let $\psi$ and $T_{\max}$ be as in Proposition~\ref{p:local_unmod}. The mapping $V \colon L^2_N \times \R \times [0,T_{\max}) \to L^2_N$ given by
\begin{align*}V(\gamma,\sigma,t)[x] &= \psi\left(x-\gamma(x)-\frac{1}{N}\sigma,t\right) - \phi(x),\end{align*}
is well-defined, continuous in $t$, and locally Lipschitz continuous in $(\gamma,\sigma)$ (uniformly in $N$ and $t$ on compact subintervals of $[0,T_{\max})$).
\end{lemma}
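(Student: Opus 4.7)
The plan is to verify the three assertions—well-definedness, continuity in $t$, and local Lipschitz continuity in $(\gamma,\sigma)$—one by one, relying throughout on the $N$-uniform Sobolev embedding $H^1_N \hookrightarrow L^\infty(\R)$ recorded in the introduction and on the continuity $\psi \in C([0,T_{\max}),H^2_N)$ from Proposition~\ref{p:local_unmod}. The crucial preliminary observation is that, for every $t \in [0,T_{\max})$, the function $\psi(\cdot,t)$ admits a $C^1$ (in fact $NT$-periodic $C^1$) representative by $H^2_N \hookrightarrow C^1(\R)$, so that $\psi(x-\gamma(x)-\sigma/N,t)$ makes pointwise sense for any $\gamma \in L^2_N$ (which is only measurable) and $\sigma \in \R$.

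For well-definedness, measurability of $x \mapsto x - \gamma(x) - \sigma/N$ combined with continuity of the chosen representative of $\psi(\cdot,t)$ gives measurability of $V(\gamma,\sigma,t)$. The $NT$-periodicity of $\gamma$, $\phi$ and $\psi(\cdot,t)$ ensures $V(\gamma,\sigma,t)$ is $NT$-periodic, while the uniform bound
\[
|V(\gamma,\sigma,t)[x]| \leq \|\psi(\cdot,t)\|_{L^\infty} + \|\phi\|_{L^\infty},
\]
shows $V(\gamma,\sigma,t) \in L^\infty(\R) \cap L^2_N$. For continuity in $t$, we note that the pointwise identity
\[
V(\gamma,\sigma,t_1)[x] - V(\gamma,\sigma,t_2)[x] = \bigl(\psi(\cdot,t_1) - \psi(\cdot,t_2)\bigr)\!\bigl(x-\gamma(x)-\sigma/N\bigr)
\]
yields
\[
\|V(\gamma,\sigma,t_1) - V(\gamma,\sigma,t_2)\|_{L^2_N} \leq \sqrt{NT}\,\|\psi(\cdot,t_1) - \psi(\cdot,t_2)\|_{L^\infty} \lesssim \sqrt{NT}\,\|\psi(t_1)-\psi(t_2)\|_{H^2_N},
\]
which tends to zero as $t_1 \to t_2$ by Proposition~\ref{p:local_unmod}.

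For the Lipschitz property in $(\gamma,\sigma)$, the mean value theorem applied to the $C^1$ representative of $\psi(\cdot,t)$ gives the pointwise bound
\[
|V(\gamma_1,\sigma_1,t)[x]-V(\gamma_2,\sigma_2,t)[x]| \leq \|\psi_x(\cdot,t)\|_{L^\infty}\Bigl(|\gamma_1(x)-\gamma_2(x)| + \tfrac{1}{N}|\sigma_1-\sigma_2|\Bigr),
\]
and squaring, integrating over $[0,NT]$, and using Minkowski's inequality yields
\[
\|V(\gamma_1,\sigma_1,t)-V(\gamma_2,\sigma_2,t)\|_{L^2_N} \leq \|\psi_x(\cdot,t)\|_{L^\infty}\Bigl(\|\gamma_1-\gamma_2\|_{L^2_N}+\sqrt{T/N}\,|\sigma_1-\sigma_2|\Bigr).
\]
The $N$-uniform embedding $H^1_N \hookrightarrow L^\infty(\R)$ then controls $\|\psi_x(\cdot,t)\|_{L^\infty} \lesssim \|\psi(t)\|_{H^2_N}$, and by Proposition~\ref{p:local_unmod} the right-hand side is continuous in $t$, hence bounded on any compact subinterval of $[0,T_{\max})$. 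This gives Lipschitz continuity (in fact globally in $(\gamma,\sigma)$, not just locally), with a constant uniform in $N$ and in $t$ on compact subintervals.

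The only mild subtlety is the composition step—evaluating $\psi(\cdot,t)$ at the argument $x-\gamma(x)-\sigma/N$ where $\gamma$ is merely in $L^2_N$—which is handled by always passing to the continuous representative of $\psi(\cdot,t)$ via the $N$-uniform Sobolev embedding before doing any pointwise estimate. Every other step reduces to standard mean value, Minkowski, and periodicity arguments.
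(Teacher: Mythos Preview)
Your proof is correct and follows essentially the same approach as the paper: both arguments hinge on the mean value theorem applied to the $C^1$ representative of $\psi(\cdot,t)$, combined with the $N$-uniform embedding $H^1_N \hookrightarrow L^\infty$ and the continuity $\psi \in C([0,T_{\max}),H^2_N)$. The only noteworthy difference is in the continuity-in-$t$ step: you bound $\|V(\gamma,\sigma,t_1)-V(\gamma,\sigma,t_2)\|_{L^2_N}$ directly by $\sqrt{NT}\,\|\psi(t_1)-\psi(t_2)\|_{L^\infty}$, incurring an $N$-dependent factor, whereas the paper splits off the reference term $V(0,0,t_1)-V(0,0,t_2)=\psi(t_1)-\psi(t_2)$ and applies the mean value estimate only to the remainder, obtaining the $N$-uniform bound $\|\psi(t_1)-\psi(t_2)\|_{W^{1,\infty}}(\|\gamma\|_{L^2_N}+N^{-1/2}|\sigma|)+\|\psi(t_1)-\psi(t_2)\|_{L^2_N}$. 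Since the lemma only asserts continuity in $t$ (not $N$-uniform continuity), your simpler estimate suffices; the paper's split is slightly sharper but not needed here.
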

\begin{proof}
We apply the mean value theorem to establish 
\begin{align}\label{e:estV1}
\begin{split}
&\left\|V(\gamma_1,\sigma_1,t) - V(\gamma_2,\sigma_2,t)\right\|_{L^2_N} \leq \|\psi(t)\|_{W^{1,\infty}}\left(\|\gamma_1 - \gamma_2\|_{L^2_N} + \frac{1}{\sqrt{N}} |\sigma_1 - \sigma_2|\right)
\end{split}
\end{align}
for $\gamma_{1,2} \in L_N^2$, $\sigma_{1,2} \in \R$ and $t \in [0,T_{\max})$. Hence, recalling the $N$-uniform embedding $H^1_N \hookrightarrow L^\infty(\R)$ and using Proposition~\ref{p:local_unmod}, it follows that $V$ is locally Lipschitz continuous in $(\gamma,\sigma)$ (uniformly in $N$ and $t$ on compact subintervals of $[0,T_{\max})$). In addition, taking $\gamma_2,\sigma_2 = 0$ in~\eqref{e:estV1} and recalling $\phi \in L^2_N$, proves that $V$ is well-defined. 

Similarly as in~\eqref{e:estV1}, we employ the mean value theorem to obtain
\begin{align*}
\|V(\gamma,\sigma,t) - V(\gamma,\sigma,s)\|_{L^{2}_N} &\leq \left\|\left(V(\gamma,\sigma,t) - V(\gamma,\sigma,s)\right) - \left(V(0,0,t) - V(0,0,s)\right)\right\|_{L^{2}_N}\\
&\qquad + \, \|V(0,0,t) - V(0,0,s)\|_{L^2_N}\\
 &\leq \|\psi(t) - \psi(s)\|_{W^{1,\infty}}\left(\|\gamma\|_{L^2_N} + \frac{1}{\sqrt{N}}|\sigma|\right) + \|\psi(t) - \psi(s)\|_{L^2_N},
\end{align*}
for $\gamma \in L^2_N$, $\sigma \in \R$ and $s,t \in [0,T_{\max})$. Continuity of $V$ with respect to $t$ now follows from Proposition~\ref{p:local_unmod} and the embedding $H^1_N \hookrightarrow L^\infty(\R)$.
\end{proof}

Next, we establish the relevant local existence result.

\begin{proposition} 
Let $\psi$ and $T_{\max}$ be as in Proposition~\ref{p:local_unmod}. Moreover, let $V \colon L^2_N \times \R \times [0,T_{\max}) \to L^2_N$ be the mapping defined in Lemma~\ref{L:loc_gamma}. Then, there exist a maximal time $\tau_{\max} \in (0,T_{\max}]$ and an $N$-independent constant $r_0 > 0$ such that the integral system
\begin{align}
\gamma(t) &= \widetilde s_{p,N}(t)v_0 + \int_0^t \widetilde s_{p,N}(t-s)\left(\mathcal{Q}(V(\gamma,\sigma,\cdot),\gamma)(s) + \d_x \mathcal{R}(V(\gamma,\sigma,\cdot),\gamma,\gamma_t,\sigma_t)(s)\right. \nonumber\\
&\qquad \qquad  \left. + \,\d_x^2 \mathcal{P}(V(\gamma,\sigma,\cdot),\gamma)(s)\right)\de s \nonumber\\
\gamma_t(t) &= \partial_t \widetilde s_{p,N}(t)v_0 + \int_0^t \partial_t \widetilde s_{p,N}(t-s)\left(\mathcal{Q}(V(\gamma,\sigma,\cdot),\gamma)(s) + \d_x \mathcal{R}(V(\gamma,\sigma,\cdot),\gamma,\gamma_t,\sigma_t)(s)\right. \nonumber\\
&\qquad \qquad  \left. + \,\d_x^2 \mathcal{P}(V(\gamma,\sigma,\cdot),\gamma)\right)(s)\de s  \label{e:intsysgamma}\\
\sigma(t) &= \chi(t) \LA\widetilde{\Phi}_0,v_0\RA_{L^2_N} + \int_0^t \chi(t-s) \left(\LA\widetilde{\Phi}_0,\mathcal{Q}(V(\gamma,\sigma,\cdot),\gamma)(s)\RA_{L^2_N}\right. \nonumber\\
&\qquad \qquad \left. -\, \LA\widetilde{\Phi}_0',\mathcal{R}(V(\gamma,\sigma,\cdot),\gamma,\gamma_t,\sigma_t)(s)\RA_{L^2_N} + \LA\widetilde{\Phi}_0'',{\mathcal{P}}(V(\gamma,\sigma,\cdot),\gamma)(s)\RA_{L^2_N}\right) \de s,\nonumber\\
\sigma_t(t) &= \chi'(t) \LA\widetilde{\Phi}_0,v_0\RA_{L^2_N} + \int_0^t \chi'(t-s) \left(\LA\widetilde{\Phi}_0,\mathcal{Q}(V(\gamma,\sigma,\cdot),\gamma)(s)\RA_{L^2_N}\right.\nonumber\\
&\qquad \qquad \left. -\, \LA\widetilde{\Phi}_0',\mathcal{R}(V(\gamma,\sigma,\cdot),\gamma,\gamma_t,\sigma_t)(s)\RA_{L^2_N} + \LA\widetilde{\Phi}_0'',{\mathcal{P}}(V(\gamma,\sigma,\cdot),\gamma)(s)\RA_{L^2_N}\right) \de s,\nonumber
\end{align}
possesses a unique solution
\begin{align*} (\gamma,\gamma_t,\sigma,\sigma_t) \in C\big([0,\tau_{\max}),H^4_N \times H^2_N \times \R \times \R\big),\end{align*}
satisfying
\begin{align} \label{e:r0bound}
\left\|\left(\sigma(t),\sigma_t(t),\gamma(t),\gamma_t(t)\right)\right\|_{\R \times \R \times H^4_N \times H^2_N} < r_0, \qquad \|\gamma_x(t)\|_{L^\infty} \leq \frac12, 
\end{align}
for all $t \in [0,\tau_{\max})$. In addition, if $\tau_{\max} < T_{\max}$, then it holds
\begin{align} \label{e:gammablowup2}
\limsup_{t \uparrow \tau_{\max}} \left\|\left(\gamma,\gamma_t,\sigma,\sigma_t\right)\right\|_{H^4_N \times H^2_N \times \R \times \R} \geq r_0.
\end{align}
Finally, we have $(\gamma,\sigma) \in C^1\big([0,\tau_{\max}),H^2_N \times \R\big)$ with $\partial_t \left(\gamma,\sigma\right)(t) = \left(\gamma_t,\sigma_t\right)(t)$ for $t \in [0,\tau_{\max})$.
\end{proposition}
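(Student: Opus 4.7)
I would prove this via a Banach fixed-point argument in a closed ball of the space
\[
X_\tau := C\big([0,\tau],\, H^4_N \times H^2_N \times \R \times \R\big),
\]
for $\tau>0$ sufficiently small, followed by a standard continuation argument to the maximal existence time. First, I would fix $r_0>0$, independent of $N$, small enough so that by the $N$-uniform embedding $H^1_N\hookrightarrow L^\infty(\R)$ any $\gamma\in H^4_N$ with $\|\gamma\|_{H^4_N}\le r_0$ satisfies $\|\gamma_x\|_{L^\infty}\le 1/2$, and so that the change-of-variables argument from the proof of Proposition~\ref{C:local_v} applies. For each $\tau\in(0,T_{\max})$, let $B_\tau\subset X_\tau$ denote the closed ball of radius $r_0$ centered at the origin, and define the solution operator $\mathcal T$ on $B_\tau$ as the right-hand side of~\eqref{e:intsysgamma}.

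The key ingredients making $\mathcal T:B_\tau\to B_\tau$ a well-defined contraction on a short time interval are: (i) the smoothing estimates on $\widetilde s_{p,N}$ from Proposition~\ref{P:lin}, which on compact time intervals allow one to absorb up to four spatial and two temporal derivatives at the cost of a (bounded, integrable) polynomial factor in $t$; (ii) an $H^2_N$-bound on $V(\gamma,\sigma,\cdot)(t)$ valid uniformly for $(\gamma,\sigma)\in B_\tau$, obtained by the substitution $y=A_t(x)$ from the proof of Proposition~\ref{C:local_v}, where invertibility of the map $x\mapsto x-\gamma(x,t)-\sigma(t)/N$ is guaranteed by $\|\gamma_x\|_{L^\infty}\le 1/2$; (iii) the polynomial structure of $\mathcal Q,\mathcal R,\mathcal P$ together with the $L^2_N$-Lipschitz continuity of $V$ in $(\gamma,\sigma)$ from Lemma~\ref{L:loc_gamma}, which, combined with $H^1_N\hookrightarrow L^\infty$, yields both the $L^1_N\cap L^2_N$-bounds on $\mathcal N(V(\gamma,\sigma,\cdot),\gamma,\gamma_t,\sigma_t)$ analogous to Lemma~\ref{lem:mod_nonlL2}, and the corresponding Lipschitz bounds on nonlinear differences. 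Since the inhomogeneous parts $\widetilde s_{p,N}(t)v_0$ and $\chi(t)\LA\widetilde\Phi_0,v_0\RA_{L^2_N}$ are continuous in $t$ and vanish at $t=0$ (because $\widetilde s_{p,N}$ and $\chi$ both vanish on $[0,1]$), and since the integrals are arbitrarily small uniformly on $B_\tau$ for small $\tau$, one obtains $\tau_0\in(0,T_{\max})$ for which $\mathcal T$ is a contraction on $B_{\tau_0}$. Iterating from the endpoint gives the maximal time $\tau_{\max}\in(0,T_{\max}]$ with the blow-up alternative~\eqref{e:gammablowup2}.

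The main obstacle is the $H^2_N$-regularity required for the inverse-modulated object $V(\gamma,\sigma,\cdot)$ in the nonlinearities: Lemma~\ref{L:loc_gamma} gives only $L^2_N$-Lipschitz continuity of $V$ in $(\gamma,\sigma)$, yet Lemma~\ref{lem:mod_nonlL2} asks for $v\in H^2_N$. The resolution, which I would implement carefully, is a mixed estimate on differences: for each bilinear/trilinear term in $\mathcal Q,\mathcal R,\mathcal P$, I keep one factor of $V(\gamma_j,\sigma_j,\cdot)$ in $H^2_N$ (bounded uniformly on the ball via (ii)) and estimate the difference $V(\gamma_1,\sigma_1,\cdot)-V(\gamma_2,\sigma_2,\cdot)$ in $L^2_N$ (Lipschitz via Lemma~\ref{L:loc_gamma}), using $H^1_N\hookrightarrow L^\infty$ to collapse the remaining $H^2_N$-factors into $L^\infty$-bounds. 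This yields the needed Lipschitz estimate on $\mathcal T$ in the $X_\tau$-topology without ever demanding $H^2_N$-Lipschitz continuity of $V$.

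Finally, the identities $\partial_t\gamma=\gamma_t$ and $\partial_t\sigma=\sigma_t$ follow \emph{a posteriori} by differentiating the first and third components of~\eqref{e:intsysgamma} under the integral sign. This is legitimate because $\widetilde s_{p,N}(0)=0$ and $\chi(0)=0$ eliminate the boundary contributions, while the smoothing bounds~\eqref{L:mod_bd1} guarantee that $\partial_t\widetilde s_{p,N}(t-s)$ acting on the nonlinearity $\mathcal N$ is a locally $s$-integrable $L^2_N$-valued map, and smoothness of $\chi$ handles the scalar identity for $\sigma$. Hence $(\gamma,\sigma)\in C^1\big([0,\tau_{\max}),H^2_N\times\R\big)$, as claimed.
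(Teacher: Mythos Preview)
Your proposal is correct and follows essentially the same architecture as the paper's proof: a Banach fixed-point argument in $C([0,\tau],H^4_N\times H^2_N\times\R\times\R)$, use of the smoothing bounds~\eqref{L:mod_bd1} to absorb the $\partial_x$ and $\partial_x^2$ appearing in $\mathcal N=\mathcal Q+\partial_x\mathcal R+\partial_x^2\mathcal P$, a standard continuation argument for the blow-up alternative, and the \emph{a posteriori} identification $\partial_t(\gamma,\sigma)=(\gamma_t,\sigma_t)$ via $\widetilde s_{p,N}(0)=0$, $\chi(0)=0$.

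The one place where your route diverges from the paper is in handling what you call the ``main obstacle'', namely the apparent need for $H^2_N$-regularity of $V(\gamma,\sigma,\cdot)$. Your resolution---pulling back the $H^2_N$-bound on $V$ from the substitution estimate~\eqref{e:subst2} and then running a mixed $H^2$/$L^2$ estimate on nonlinear differences---works, but is more elaborate than necessary. The paper sidesteps this entirely: since $\psi(t)\in H^2_N\hookrightarrow L^\infty(\R)$, one has the \emph{trivial} bound $\|V(\gamma,\sigma,t)\|_{L^\infty}\le \|\psi(t)\|_{L^\infty}+\|\phi\|_{L^\infty}$, which is \emph{independent of} $(\gamma,\sigma)$. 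Combined with the $L^2_N$-Lipschitz continuity of $V$ from Lemma~\ref{L:loc_gamma}, this immediately gives that $\widetilde{\mathcal Q}(\gamma,\sigma,t)=\mathcal Q(V(\gamma,\sigma,t),\gamma)$, $\widetilde{\mathcal R}$, $\widetilde{\mathcal P}$ are locally Lipschitz as maps into $L^2_N$, without ever invoking the change-of-variables machinery or $H^2_N$-control on $V$. In short, the $L^\infty$-bound on $V$ comes for free from $\psi$, not from $\gamma$, so the obstacle you identify is less serious than it appears.
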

\begin{proof}
First, we note that, for any $j,l \in \mathbb N_0$ the operators $\partial_t^l \widetilde s_{p,N}(t)\partial_x^j \colon L^2_N \to H^4_N$ and $L^2_N \to \R, f \mapsto \smash{\partial_t^l \chi(t) \langle \partial_x^j \widetilde{\Phi}_0, f\rangle_{L^2_N}}$ are $t$- and $N$-uniformly bounded by Proposition~\ref{P:lin}. Second, the $N$-uniform embedding $H^1_N \hookrightarrow L^\infty(\R)$ yields an $N$-independent constant $r_0 > 0$ such that if $\gamma \in H^2_N$ satisfies $\smash{\|\gamma\|_{H^2_N}} \leq r_0$, then we have $\smash{\|\gamma_x\|_{L^\infty} \leq \tfrac12}$. Third, letting $B_1$ and $B_2$ be the closed balls of radius $r_0$ centered at the origin in $H_4^N \times \R$ and $H_4^N \times H^2_N \times \R \times \R$, respectively, it follows from Lemma~\ref{L:loc_gamma} that the nonlinear maps $\smash{\widetilde{\mathcal{Q}},\widetilde{\mathcal{P}}} \colon B_1 \times [0,T_{\max}) \to L^2_N$ and $\smash{\widetilde{\mathcal{R}}} \colon B_2 \times [0,T_{\max}) \to L^2_N$ given by
\begin{align*}
\widetilde{\mathcal{P}}(\gamma,\sigma,t) = \mathcal{P}\left(V\left(\gamma,\sigma,t\right),\gamma\right), \qquad
\widetilde{\mathcal{Q}}(\gamma,\sigma,t) = \mathcal{Q}\left(V\left(\gamma,\sigma,t\right),\gamma\right),
\end{align*}
and
\begin{align*}
\widetilde{\mathcal{R}}(\gamma,\gamma_t,\sigma,\sigma_t,t) = \mathcal{R}\left(V\left(\gamma,\sigma,t\right),\gamma,\gamma_t,\sigma_t\right),
\end{align*}
are well-defined, continuous in $t$ and locally Lipschitz continuous in $(\gamma,\sigma)$ and $(\gamma,\gamma_t,\sigma,\sigma_t)$, respectively (uniformly in $N$ and $t$ on compact subintervals of $[0,T_{\max})$). 

Abbreviating $A = (\gamma,\gamma_t,\sigma,\sigma_t)$, we thus find that the right-hand side of~\eqref{e:intsysgamma} is of the abstract form
$$\mathcal{S}_1(t)A_0 + \int_0^{t} \left(\mathcal{S}_1(t-s)\breve{\mathcal{N}}_1(A(s),s) + \mathcal{S}_2(t-s)\breve{\mathcal{N}}_2(A(s),s) + \mathcal{S}_3(t-s)\breve{\mathcal{N}}_3(A(s),s)\right) \de s,$$
with $A_0 = (v_0,v_0,v_0,v_0)$ and where $\mathcal{S}_i(t)$ are $t$- and $N$-uniformly bounded operators and the nonlinear maps $\smash{\breve{\mathcal{N}}}_i(A,t)$ are continuous in $t$ and locally Lipschitz continuous in $A$ (uniformly in $N$ and $t$ on compact subintervals of $[0,T_{\max})$). Thus, standard arguments, see for instance~\cite[Proposition 4.3.3]{CA98} or~\cite[Theorem 6.1.4]{Pazy}, now yield for any $\vartheta \in (0,r_0)$ a time $\tau_\vartheta > 0$ such that $\Psi \colon C\big([0,\tau_\vartheta],B_N(r_0 - \vartheta)\big) \to C\big([0,\tau_\vartheta],B_N(r_0 - \vartheta)\big)$, where $\Psi(A)[t]$ is given by the right-hand side of~\eqref{e:intsysgamma} and $B_N(r)$ is the closed ball centered at the origin in $H^4_N \times H^2_N \times \R \times \R$ of radius $r$, is a well-defined contraction mapping. Hence, by the Banach fixed point theorem, $\Psi$ admits a unique fixed point, which yields a unique solution $A \in C\big([0,\tau_\vartheta],B_N(r_0 - \vartheta)\big)$ of~\eqref{e:intsysgamma}. Letting $\tau_{\max} \in (0,T_{\max}]$ be the supremum of all such times $\tau_\vartheta$ with $\vartheta \in (0,r_0)$, we obtain a maximally defined solution $A \in C\big([0,\tau_{\max}),H^4_N \times H_2^N \times \R \times \R\big)$ of~\eqref{e:intsysgamma} obeying~\eqref{e:r0bound}.

Next, assume by contradiction that $\tau_{\max} < T_{\max}$ and~\eqref{e:gammablowup2} does not hold. Then, there exists a constant $\vartheta_0 \in (0,r_0/2)$ such that it holds $A(t) \leq r_0 - 2\vartheta_0$ for all $t \in [0,\tau_{\max})$. Take $t_0 \in [0,\tau_{\max})$. Similarly as before, one proves that there exists a constant $\delta > 0$, independent of $t_0$, such that $\Psi_{t_0} \colon C\big([t_0,\min\{t_0+\delta,T_{\max}\}],B_N(r_0 - \vartheta_0)\big) \to C\big([t_0,\min\{t_0+\delta,T_{\max}\}],B_N(r_0 - \vartheta_0)\big)$ given by
\begin{align*}
\Psi_{t_0}(\widetilde{A}) &= \mathcal{S}_1(t)A_0 + \int_0^{t_0} \left(\mathcal{S}_1(t-s)\breve{\mathcal{N}}_1(A(s),s) + \mathcal{S}_2(t-s)\breve{\mathcal{N}}_2(A(s),s\right) + \mathcal{S}_3(t-s)\breve{\mathcal{N}}_3(A(s),s) \de s \\
&\qquad + \, \int_{t_0}^{t} \left(\mathcal{S}_1(t-s)
\breve{\mathcal{N}}_1(\widetilde{A}(s),s) + \mathcal{S}_2(t-s)
\breve{\mathcal{N}}_2(\widetilde{A}(s),s) + \mathcal{S}_3(t-s)\breve{\mathcal{N}}_3(\widetilde{A}(s),s)\right)\de s,\end{align*}
is a well-defined contraction mapping, which admits a unique fixed point $\widetilde A \in C\big([t_0,\min\{t_0+\delta,T_{\max}\}], B_N(r_0-\vartheta_0)\big)$. Now setting $t_0 := \tau_{\max} - \delta/2$, it readily follows that the function $\check{A} \in C\big([0,\min\{\tau_{\max} + \delta/2,T_{\max}\}],B_N(r_0-\vartheta_0)\big)$ given by
\begin{align*}
 \check{A}(t) = \begin{cases} A(t), & t \in [0,\tau_{\max} - \frac{\delta}{2}], \\ \widetilde A(t), & t \in [\tau_{\max} - \frac{\delta}{2},\min\{\tau_{\max} + \frac{\delta}{2},T_{\max}\}],
                     \end{cases}
\end{align*}
solves~\eqref{e:intsysgamma}, which contradicts the maximality of $\tau_{\max}$. We conclude that, if $\tau_{\max} < T_{\max}$, then~\eqref{e:gammablowup2} must hold.

Finally, using Proposition~\ref{P:lin}, we observe that both $\gamma(t)$ and $\sigma(t)$ are differentiable on $[0,\tau_{\max})$ with $\partial_t (\gamma,\sigma)(t) = (\gamma_t,\sigma_t)(t)$, where we use $\widetilde s_{p,N}(0) = 0$ and $\chi(0) = 0$. This completes the proof.
\end{proof}

\bibliographystyle{abbrv}
\bibliography{LLE}

\end{document}